\providecommand{\cal}{\mathcal}
\renewcommand{\Bbb}{\mathbb}
\newenvironment{pf}{\begin{proof}}{\end{proof}}
\newcommand{\Aaa}{{\cal{A}}}
\newcommand{\Bee}{{\cal{B}}}
\newcommand{\Cee}{{\cal{C}}}
\newcommand{\Dee}{{\cal{D}}}
\newcommand{\Ef}{{\cal{F}}}
\newcommand{\Pee}{{\cal{P}}}
\newcommand{\Nat}{{\Bbb{N}}}
\newcommand{\Err}{{\Bbb{R}}}
\newcommand{\al}{\alpha}
\renewcommand{\phi}{\varphi}
\renewcommand{\rho}{\varrho}
\newcommand{\rest}{\restriction}
\newcommand{\ntr}{{n\in\omega}}
\newcommand{\loe}{\leqslant}
\newcommand{\goe}{\geqslant}
\newcommand{\subs}{\subseteq}
\newcommand{\nnempty}{\ne\emptyset}
\newcommand{\ovr}{\overline}
\newcommand{\cl}{\operatorname{cl}}
\newcommand{\cf}{\operatorname{cf}}
\newcommand{\supp}{\operatorname{supp}}
\newtheorem{tw}{Theorem}[section]
\newtheorem{lm}[tw]{Lemma}
\newtheorem{prop}[tw]{Proposition}
\theoremstyle{definition}
\newtheorem{df}[tw]{Definition}
\newtheorem{ex}[tw]{Example}
\newtheorem{question}[tw]{Question}
\theoremstyle{remark}
\newcommand{\setof}[2]{\{#1\colon #2\}}
\newcommand{\sett}[2]{\{#1\}_{#2}}
\newcommand{\sn}[1]{\{#1\}} 
\newcommand{\dn}[2]{\{#1,#2\}} 
\newcommand{\pair}[2]{\langle #1, #2 \rangle} 
\newcommand{\map}[3]{#1\colon #2 \to #3} 
\newcommand{\im}{\operatorname{im}}
\newcommand{\dpower}[2]{[#1]^{#2}}
\newcommand{\ciag}[1]{{\sett{{#1}_n}{\ntr}}}
\newcommand{\iso}{\approx}
\newcommand{\norm}[1]{\|#1\|}
\newcommand{\uball}[1]{\ovr B_{#1}}
\newcommand{\ubal}{\uball}
\newcommand{\cmp}{\circ} 
\title{Almost disjoint families of countable sets and separable complementation properties}
\author{
{Jes\'us Ferrer}
\footnote{The first author has been partially supported by 
MEC and FEDER Project MTM2011-22417.}
\\
{\small Department of Mathematical Analysis, University of Valencia}\\
{\small Dr. Moliner 50, 46100 Burjassot (Valencia), Spain}\\
{\small\texttt{Jesus.Ferrer@uv.es}}
\and
{Piotr Koszmider}
\footnote{The second author has been partially supported by the National Science Center research grant 2011/01/B/ST1/00657.}
\\
{\small Mathematical Institute, Polish Academy of Sciences}\\
{\small ul. \'Sniadeckich 8, 00-956 Warszawa, Poland}\\
{\small\texttt{P.Koszmider@Impan.pl}}
\and
{Wies{\l}aw Kubi\'s}
\footnote{The third author has been supported by the GA\v CR grant P 201/12/0290. }
\\
{\small Institute of Mathematics, Academy of Sciences of the Czech Republic}\\
{\small \v Zitn\'a 25, 115 67 Praha 1, Czech Republic}\\
{\small\texttt{kubis@math.cas.cz}}
}
\newcommand{\el}{l}
\newcommand{\aue}[1]{K_{\pair{#1}\omega}}
\begin{document}

\maketitle

\begin{abstract}
We study the separable complementation property (SCP) and its natural variations in Banach spaces of continuous functions over compacta $K_\Aaa$ induced by almost disjoint families $\Aaa$ of countable subsets of uncountable sets. 
For these spaces, we prove among others that $C(K_\Aaa)$ has the controlled variant of the separable complementation property if and only if $C(K_\Aaa)$ is Lindel\"of in the weak topology if and only if $K_\Aaa$ is monolithic.
We give an example of $\Aaa$ for which $C(K_\Aaa)$ has the SCP, while $K_\Aaa$ is not monolithic and an example of a space $C(K_\Aaa)$ with controlled and continuous SCP which has neither a projectional skeleton nor a projectional resolution of the identity.
Finally, we describe the structure of
almost disjoint families of cardinality $\omega_1$ which induce monolithic spaces of the form $K_\Aaa$: 
They can be obtained from countably many ladder systems and pairwise disjoint families
applying  simple operations.

\noindent
\textbf{MSC (2010)}
Primary:
03E75, 
46E15. 
Secondary:
46B20, 
46B26. 

\noindent
\textbf{Keywords:} Almost disjoint family, Projections in Banach spaces, Separable complementation property, Ladder system space.
\end{abstract}

\tableofcontents

\section{Introduction}

In this paper we deal with  Banach spaces of real-valued continuous functions 
defined on a compact Hausdorff space $K$ with the supremum norm and denoted $C(K)$.
Actually, the compact spaces we consider are induced by a combinatorial object---an almost disjoint
family. Recall that a family of infinite sets
 $\Aaa$ is \emph{almost disjoint} if $A\cap B$ is finite whenever $A, B\in \Aaa$ are different.
Additionally, we shall only consider almost disjoint families of countable sets. Almost disjoint families of $\Nat$ have found
many applications in the theory of Banach spaces, some classical cases include
Whitley's short  proof of Philips' theorem in \cite{whitley}, Haydon's
constructions of Grothendieck spaces in \cite{haydon}, 
or Johnson and Lindenstrauss' counterexamples concerning 
weakly compactly generated Banach spaces in \cite{johnsonlind}. We, however, will focus on
almost disjoint families of countable subsets of uncountable sets. Several of such families
induce Banach spaces which
could also be considered classical (counter)-examples like the Ciesielski-Pol spaces
(see \cite{ciesielskipol, DGZ}) or continuous functions on the ladder system spaces (\cite{pollindelof},
\cite{ciesielskipol}).

Let us be more precise on how we obtain a compact space and then the Banach space
from an almost disjoint family.
Fix  a family $\Aaa$ of countable almost disjoint sets 
with $X = \bigcup\Aaa$. We define a compact space $K_\Aaa$ as follows.
The points of $K_\Aaa$ are
$$X \cup \setof{\el_A}{A\in\Aaa} \cup \sn \infty,$$
where each point of $X$ is declared to be isolated, a basic neighborhood of $\el_A$ is of the form $\sn{\el_A}\cup (A\setminus F)$ where $F$ is finite, and a basic neighborhood of $\infty$ is of the form $$K_\Aaa \setminus \bigcup_{i < n}(\sn{\el_{A_i}}\cup A_i)$$
where $\setof{A_i}{i < n} \subs \Aaa$.
The space $K_\Aaa$ is compact scattered of height 3 with $\infty$ being the unique point at the maximal level.
Furthermore, $K_\Aaa \setminus \sn\infty$ is first countable iff $\Aaa$ consists of countable sets.
In order to shorten some expressions, given $\Bee \subs \Aaa$, we shall write $\el_\Bee$ for the set $\setof{\el_B}{B\in \Bee}$. For countable $X$ these spaces can be traced to Alexandroff and Urysohn, and later were known as Mr\'owka's
spaces, $\Psi$-spaces or Isbell spaces. One could also note that $K_\Aaa$ is the Stone space of
the Boolean algebra generated by $\Aaa$ and the finite subsets of $\bigcup\Aaa$.

We investigate the Banach spaces of the form $C(K_\Aaa)$ focusing on how some aspects of their geometry depend
 on the topological properties of $K_\Aaa$
which in turn depend on the combinatorial properties of $\Aaa$. To be more precise, we investigate the family of 
complemented separable subspaces of $C(K_\Aaa)$ depending on $\Aaa$. The links between
almost disjoint families and projections can be traced back to \cite{whitley}. 
Recall that a subspace $F$ of a Banach space $X$ is \emph{complemented} if there exists a bounded linear operator (called \emph{projection}) $\map P X X$ satisfying $P^2 = P$ and $\im P = F$, where $\im P$ denotes the image of $P$.
We consider several properties of Banach spaces
in terms of the structure  of some families of  projections on separable subspaces
ranging from the existence of unconditional basis to the existence of projectional skeletons. Let us
recall some of these properties here:

\begin{df}\label{maindefinition} Let $X$ be a Banach space.
\begin{enumerate}
\item $X$ has the \emph{separable complementation property} (SCP) if  every
separable subspace of $X$ is contained in a separable complemented subspace.
\item $X$ has the \emph{controlled separable property} (briefly: \emph{controlled SCP}) if for every separable subspace $F \subs X$ and for every separable subspace $G \subs X^*$ there exists a projection $\map P X X$ such that $F \subs \im P$, $G\subs \im P^*$ and $\im P$ is separable.
\item $X$ has the \emph{continuous separable complementation property} (briefly: \emph{continuous SCP}) if there
is a family $\Ef$ of separable complemented subspaces
of $X$ such that $X = \bigcup\Ef$, $\Ef$ is up-directed and for every sequence $F_0 \subs F_1 \subs \dots$ in $\Ef$ the closure of the union $\bigcup_{\ntr}F_n$ is in $\Ef$. A family $\Ef$ as above is called a skeleton of
complemented subspaces of $X$.
\item $X$ has a \emph{projectional skeleton} if it has a skeleton as above such that there exist  projections
$P_E: X\rightarrow E$ which satisfy $P_E\circ P_F=P_F\circ P_E=P_E$ whenever $E\subseteq F$
with $E, F\in \Ef$.
\end{enumerate}
\end{df}

The SCP has appeared in the classical work of Amir and Lindenstrauss (\cite{AL}) where it is
proved that weakly compactly generated Banach spaces share this property and it motivated 
some main lines of research concerning non-separable Banach spaces. 
Probably the widest well-investigated, at the present moment, class of Banach spaces with this property
is the class of weakly Lindel\"of determined spaces (\cite{gonzalez}). It is closed under taking
quotients and subspaces, however SCP is not inherited by these operations. This follows from a
recent example of  Figiel, Johnson and Pe\l czy\'nski~(\cite{FJP}) for subspaces and
from a simple observation that $l_1(\kappa)$ has the SCP for any cardinal $\kappa$.
The terminology concerning the
controlled SCP and the continuous SCP is taken from  \cite{Woj} and  \cite{KalKub2}
respectively, however these properties were considered  in the literature earlier
(e.g. 4.4. \cite{plichkoyost}, 6, 7 \cite{Kub_lines});
controlled SCP has been recently in \cite{Fer, FerWoj};
projectional  skeletons were introduced and studied in \cite{Kub_skel}.
By definitions, the SCP is implied by the other properties, and the existence
of a projectional skeleton implies the continuous SCP. The remaining implications
are not valid even for $C(K)$ spaces for quite
particular classes. This follows  from our examples (\ref{nonmono}, \ref{nonprojskeleton}) as
well as from the main  examples of \cite{KalKub2, Kub_lines}.
One of the simplest examples of a $C(K)$ space with a projectional skeleton and without the controlled SCP comes by taking $K = \{0,1\}^{\omega_1}$.
It is well known that this $K$ belongs to the class of Valdivia compact spaces and therefore $C(K)$ has a projectional skeleton (see, e.g., \cite{KKLP, Kub_skel}).
On the other hand, $K$ is not monolithic, so $C(K)$ cannot have the controlled SCP.

Perhaps at the beginning of the discussion of the results of our
paper we should mention a result proved at the end of Section 2:

\begin{tw}\label{scpimpliescscp} Suppose that $\Aaa$ is an almost
disjoint family of countable sets. $C(K_\Aaa)$ has a continuous SCP if and only if it has the SCP.
\end{tw}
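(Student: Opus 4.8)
The implication ``continuous SCP $\Rightarrow$ SCP'' uses no special structure and is immediate from Definition~\ref{maindefinition} (and is noted in the introduction): given a skeleton $\Ef$ of complemented subspaces and a separable $F\subs C(K_\Aaa)$, fix a countable dense set $\sett{x_n}{\ntr}\subs F$, choose $E_n\in\Ef$ with $x_n\in E_n$, and, using that $\Ef$ is up-directed, build an increasing chain $G_0\subs G_1\subs\cdots$ in $\Ef$ with $E_n\subs G_n$; then $\overline{\bigcup_n G_n}\in\Ef$ is separable, complemented and contains $F$. So the content of the theorem is the converse, and the plan is to exhibit an explicit skeleton built from ``coordinate subspaces''.

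For a countable $\Bee\subs\Aaa$ put $L_\Bee=\bigcup_{B\in\Bee}(\sn{\el_B}\cup B)$ and
$$F_\Bee=\setof{f\in C(K_\Aaa)}{f\restriction(K_\Aaa\setminus L_\Bee)\ \text{is constant}}$$
(the constant is then necessarily $f(\infty)$). First I would record the elementary facts, which use only that the members of $\Aaa$ are countable together with the description of neighbourhoods of $\infty$: each $F_\Bee$ is a closed \emph{separable} subspace; $F_{\Bee_1}\subs F_{\Bee_2}$ iff $\Bee_1\subs\Bee_2$; every $f\in C(K_\Aaa)$ lies in $F_\Bee$ for some countable $\Bee$ (continuity at $\infty$), hence so does every separable subspace of $C(K_\Aaa)$; and, by a truncation argument, $\overline{\bigcup_n F_{\Bee_n}}=F_{\bigcup_n\Bee_n}$ for every increasing sequence $\Bee_0\subs\Bee_1\subs\cdots$. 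It follows that if
$$\Gamma=\bigsetof{\Bee\in[\Aaa]^{\le\omega}}{F_\Bee\ \text{is complemented in}\ C(K_\Aaa)}$$
is cofinal in $([\Aaa]^{\le\omega},\subs)$ and closed under unions of countable increasing chains, then $\setof{F_\Bee}{\Bee\in\Gamma}$ is a skeleton of complemented subspaces: its members are separable complemented, it is up-directed and covers $C(K_\Aaa)$ because $\Gamma$ is cofinal, and it is closed under countable increasing unions by the displayed identity together with the $\sigma$-closedness of $\Gamma$. This yields the continuous SCP.

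Thus everything reduces to the two properties of $\Gamma$, and both rest on the single assertion
$$(\ast)\qquad \Bee_0\subs\Bee_1\subs\cdots\ \text{countable},\ \text{each}\ F_{\Bee_n}\ \text{complemented}\ \Longrightarrow\ F_{\bigcup_n\Bee_n}\ \text{complemented}.$$
Indeed $(\ast)$ is precisely $\sigma$-closedness of $\Gamma$; and cofinality follows by iterating the SCP with the help of the fact that every separable subspace sits inside a coordinate subspace: starting from a countable $\Bee$, produce $F_\Bee=F_{\Bee_0}\subs E_1\subs F_{\Bee_1}\subs E_2\subs F_{\Bee_2}\subs\cdots$ with each $E_k$ separable complemented and each $\Bee_k$ countable, whence $F_{\bigcup_k\Bee_k}=\overline{\bigcup_k E_k}$; then a further application of the structural analysis behind $(\ast)$ (now applied to the complemented spaces $E_k$, which lie cofinally below $F_{\bigcup_k\Bee_k}$) shows $F_{\bigcup_k\Bee_k}$ complemented, so $\bigcup_k\Bee_k\in\Gamma$ extends $\Bee$.

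The hard part is $(\ast)$: in a general Banach space a closure of an increasing union of complemented subspaces need not be complemented, so one must use the concrete structure of $C(K_\Aaa)$. I would first obtain --- this is the work I expect to be carried out in Section~2 --- a description of which countable $\Bee$ have $F_\Bee$ complemented, in terms of a uniformly bounded choice of ``extension data'' along those $A\in\Aaa$ with $A\notin\Bee$ but $A\cap\bigcup\Bee$ infinite: the data needed to push a function continuously down to its value at $\infty$ near the limit points $\el_A$ while respecting continuity at each $\el_B$ with $B\in\Bee$. The relevant compatibility constraints are of countable character in $\Bee$, so such data for the $\Bee_n$ amalgamate, with a common bound, into data for $\bigcup_n\Bee_n$; verifying that the amalgamated data really induces a bounded projection onto $F_{\bigcup_n\Bee_n}$ is the technical core of the argument, and it delivers $(\ast)$ and hence the $\sigma$-closedness of $\Gamma$. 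Everything else is the bookkeeping described above.
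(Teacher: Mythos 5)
Your choice of skeleton is essentially the right one --- your coordinate subspaces $F_\Bee$ are (up to a harmless reformulation) the subspaces $X(\Bee)$ used in the paper, and the bookkeeping about cofinality and closure under countable increasing unions is fine. But the step you defer as ``the technical core'', namely that the coordinate subspaces are complemented, is precisely where the whole proof lives, and the route you sketch for it is not carried out and is not the one that works. There is no need to characterize which countable $\Bee$ give a complemented $F_\Bee$ and then amalgamate ``extension data'' along an increasing chain. The paper's observation is that each $X(\Bee)$ is isomorphic to $C(K_\Bee)$, that $K_\Bee$ is metrizable (hence Eberlein compact), and therefore that $X(\Bee)$ is isomorphic to $c_0$ by Proposition~\ref{eberleinc0}. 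Since the SCP together with Sobczyk's theorem implies that \emph{every} isomorphic copy of $c_0$ in $C(K_\Aaa)$ is complemented (put the copy inside a separable complemented subspace $E$, project $C(K_\Aaa)$ onto $E$, and then, by Sobczyk, project $E$ onto the copy), every coordinate subspace is complemented at once. In other words, your set $\Gamma$ is all of $[\Aaa]^{\le\omega}$, and both cofinality and $\sigma$-closedness become vacuous.

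As written, your argument has a genuine gap at $(\ast)$: you assert that the closure of an increasing union of complemented coordinate subspaces is complemented via an amalgamation of uniformly bounded ``extension data'', but you neither define this data nor establish the uniform bound, and this is exactly the point at which such arguments fail for general Banach spaces, as you yourself note. Your cofinality argument has the same problem in a worse form: it invokes ``the structural analysis behind $(\ast)$'' for the interpolated spaces $E_k$, which are arbitrary separable complemented subspaces produced by the SCP and need not be coordinate subspaces, so whatever characterization you had in mind would not even apply to them. The missing idea is the identification of the coordinate subspaces with $c_0$ combined with Sobczyk's theorem; once that is in place, all of the amalgamation machinery is unnecessary.
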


In Section 2 we analyze the simplest case of an almost disjoint family, namely, a {\it disjointifiable} family, in other words, a family that is equivalent (modulo finite sets) to a
pairwise disjoint one (see Definition \ref{equivdisjointdef}). It turns out that in this trivial case
corresponding to weakly compactly generated Banach space we have unconditional basis and hence
the richest family of commuting projections. We obtain:

\begin{tw}\label{eberleinlist} Suppose $\Aaa$ is an almost disjoint family of countable sets, then the following are equivalent:
\begin{enumerate}
\item $\Aaa$ is disjointifiable,
\item  $K_\Aaa$ is an  Eberlein compact,
\item $C(K_\Aaa)$ is isomorphic to $c_0(\kappa)$ for $\kappa=|\Aaa|+\omega$,
\item $C(K_\Aaa)$ has an unconditional basis,
\item $C(K_\Aaa)$ has a projectional skeleton.
\end{enumerate}
\end{tw}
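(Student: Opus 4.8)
The plan is to prove the chain of equivalences by establishing enough implications to close a cycle, exploiting the structural description of $K_\Aaa$. The implications $(3)\Rightarrow(4)\Rightarrow(5)$ are standard Banach space facts ($c_0(\kappa)$ has an unconditional basis; a space with an unconditional basis, or more generally any WCG space, has a projectional skeleton), so the real content is to show $(1)\Rightarrow(2)$, $(1)\Rightarrow(3)$, $(2)\Rightarrow(1)$ (or $(5)\Rightarrow(1)$), and to see that the remaining arrows fall out. I would organize it as $(1)\Rightarrow(3)\Rightarrow(4)\Rightarrow(5)\Rightarrow(1)$ together with $(1)\Leftrightarrow(2)$.

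First, $(1)\Rightarrow(3)$ and $(1)\Rightarrow(2)$: assume $\Aaa$ is disjointifiable. Replacing each $A\in\Aaa$ by $A\setminus F_A$ for suitable finite $F_A$ does not change the homeomorphism type of $K_\Aaa$ (removing finitely many isolated points and re-routing them is a trivial modification), so I may assume $\Aaa$ is genuinely pairwise disjoint. Then $K_\Aaa$ is homeomorphic to the one-point compactification of a topological sum of countably-or-uncountably many convergent sequences, i.e. to a subspace of a "generalized $[0,1]$-sum" — concretely, the set $\setof{e_x}{x\in X}\cup\setof{\el_A}{A\in\Aaa}\cup\sn\infty$ sitting inside $c_0(X\cup\Aaa)$ where $\el_A$ corresponds to the characteristic-type vector supported on $\set A$. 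This exhibits $K_\Aaa$ as an Eberlein compact, giving $(1)\Rightarrow(2)$. For $(3)$, I would write down an explicit isomorphism $C(K_\Aaa)\cong c_0(\kappa)$: every $f\in C(K_\Aaa)$ is constant equal to $f(\infty)$ outside a set meeting only finitely many "columns" $\sn{\el_A}\cup A$ up to $\eps$, and on each column $\sn{\el_A}\cup A$ the function is a convergent sequence with limit close to $f(\el_A)$; decomposing $f$ in the obvious Schauder-type basis consisting of point masses at the isolated points, "column tails" converging to $\el_A$, and the constant $1$, one checks the coordinate functionals are uniformly bounded and the basis is unconditional and equivalent to the unit vector basis of $c_0$. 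Here $\kappa=|\Aaa|+\omega$ is the obvious cardinality count of the basis. This simultaneously yields $(3)$ and, composing, $(4)$.

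Then $(4)\Rightarrow(5)$ is classical, and for $(5)\Rightarrow(1)$ — the crux — I argue contrapositively: if $\Aaa$ is not disjointifiable, then $C(K_\Aaa)$ has no projectional skeleton. The obstruction is that non-disjointifiability forces $K_\Aaa$ to fail to be Corson (equivalently, $C(K_\Aaa)$ fails to be weakly Lindelöf determined), and in fact it must fail more strongly. The key point is the classical fact (Argyros–Mercourakis–Negrepontis / Alster–Pol circle of ideas, or directly via the characterization of Eberlein compacta as those scattered-of-finite-height compacta whose "adequate family" structure is $\sigma$-point-finite) that $K_\Aaa$ is Eberlein iff the family $\Aaa$, viewed as subsets of $X$, can be written as a countable union of point-finite subfamilies — and for an almost disjoint family of countable sets this $\sigma$-point-finiteness is exactly equivalent to disjointifiability (a point-finite almost disjoint family is finite-to-one refinable to a disjoint one by a back-and-forth removal of the finitely many overlaps at each point; a countable union of these is disjointifiable after a further diagonalization). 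Since a $C(K)$ with a projectional skeleton over a scattered $K$ of height $3$ forces $K$ into the Eberlein class (one shows the projectional skeleton produces a $\sigma$-point-finite $T_0$-separating family of clopen sets, hence $K$ Eberlein — this uses that scattered Corson compacta of finite height are Eberlein), $(5)$ gives $K_\Aaa$ Eberlein, hence (by the previous sentence) $\Aaa$ disjointifiable. Finally $(2)\Rightarrow(5)$ is again standard (Eberlein $\Rightarrow$ WCG $\Rightarrow$ projectional skeleton) and $(2)\Rightarrow(1)$ follows by the same $\sigma$-point-finiteness characterization, closing all loops.

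The main obstacle I anticipate is the implication $(5)\Rightarrow(1)$, specifically extracting from an abstract projectional skeleton on $C(K_\Aaa)$ the combinatorial consequence that $\Aaa$ is $\sigma$-point-finite. The clean route is: a projectional skeleton yields that $K_\Aaa$ embeds into a Valdivia compact in a suitable way, hence $K_\Aaa$ is Corson (scattered + Valdivia-retractable to metrizable pieces); then one invokes that a Corson compact which is scattered of height $3$ is Eberlein — this last step is where one must be careful, and it may be cleanest to prove it by hand for the specific spaces $K_\Aaa$: a Corson-compact $K_\Aaa$ means there is a $T_0$-separating point-countable family of clopen sets, from which, using that the only non-isolated points besides $\infty$ are the $\el_A$ and that a clopen set separating $\el_A$ from $\el_B$ must essentially be a finite modification of $\bigcup$ of some columns, one reconstructs a point-countable — and after a Baire-category / stationarity refinement, $\sigma$-point-finite — indexing of $\Aaa$, i.e. disjointifiability. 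I would isolate this combinatorial lemma ("$K_\Aaa$ Corson $\Rightarrow$ $\Aaa$ disjointifiable") as the technical heart of the argument and prove the rest by citing WCG/Eberlein/projectional-skeleton theory.
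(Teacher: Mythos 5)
Your overall architecture --- closing the cycle via $(1)\Leftrightarrow(2)$ and $(1)\Rightarrow(3)\Rightarrow(4)\Rightarrow(5)\Rightarrow(1)$ --- matches the paper's, which simply applies Propositions \ref{eberleindisjoint}, \ref{eberleinc0} and \ref{eberleinandskeletons}. The implications $(1)\Leftrightarrow(2)$ and $(2)\Rightarrow(3)\Rightarrow(4)\Rightarrow(5)$ are handled adequately: your $\sigma$-point-finite argument is essentially Proposition \ref{eberleindisjoint} (Rosenthal's characterization plus Lemma \ref{lemmadisjoint}(3)--(4)), and your sketch of the isomorphism with $c_0(\kappa)$, though vaguer than the paper's projection onto $\setof{f\in C(K_\Aaa)}{f(\el_A)=0=f(\infty)\ \hbox{for all}\ A\in\Aaa}$, is workable.

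The genuine gap is in $(5)\Rightarrow(1)$. You claim that a projectional skeleton puts $K_\Aaa$ into the Valdivia class and that ``scattered $+$ Valdivia-retractable'' yields Corson. Neither half survives scrutiny: a projectional skeleton on $C(K)$ corresponds only to a retractional-skeleton-type structure on $K$, which is weaker than Valdivia, and even Valdivia plus scattered does not imply Corson --- $[0,\omega_1]$ is a scattered Valdivia compact that is not Corson, and the paper itself notes that $\{0,1\}^{\omega_1}$ is Valdivia (so $C(\{0,1\}^{\omega_1})$ has a projectional skeleton) while being far from monolithic. What actually bridges the gap in Proposition \ref{eberleinandskeletons} is the dichotomy of \cite{retractions}: after renorming so that the skeleton becomes a $1$-projectional skeleton, the dual ball lies in the class $\mathcal R$ and is therefore either Corson or contains a homeomorphic copy of $[0,\omega_1]$. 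The entire technical weight is then carried by Lemma \ref{nocopyw1}, which excludes a copy of $[0,\omega_1]$ from the dual ball of $C(K_\Aaa)$ by a nontrivial argument combining the Pressing Down Lemma (to extract a norm-$\delta$-separated stationary subfamily), the $\sigma$-completeness of the nonstationary ideal, and the fact that the Szlenk index of $c_0$ is $\omega$, whereas $\delta$-separated copies of arbitrarily long countable ordinal intervals in the dual ball of an $X(\Bee)\sim c_0$ would force it higher. Your proposal contains no substitute for this lemma; note in particular that the copy of $[0,\omega_1]$ must be excluded from the \emph{dual ball}, not from $K_\Aaa$ itself (where the height-$3$ restriction would make it trivial), so the finite height of $K_\Aaa$ does not do the work for you. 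Your fallback reduction ``Corson $\Rightarrow$ point-countable separating clopen family $\Rightarrow$ disjointifiable'' is fine, but it begins from Corson, which is exactly what you have not derived from $(5)$.
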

\begin{proof}Apply \ref{eberleindisjoint}, \ref{eberleinc0}, \ref{eberleinandskeletons}.
\end{proof}

In Section 3 we consider the case corresponding to Banach spaces which are
Lindel\"of in the weak topology. We obtain the following:

\begin{tw}\label{monolithiclist} Suppose $\Aaa$ is an almost disjoint family of countable sets, then the following are equivalent
\begin{enumerate}
\item $\Aaa$ is locally small,
\item  $K_\Aaa$ is monolithic,
\item $C(K_\Aaa)$ has the controlled SCP,
\item $C(K_\Aaa)$ is Lindel\"of in the weak topology,
\item $B_{C(K_\Aaa)^*}$ is monolithic  in the weak$^*$ topology.
\end{enumerate}
\end{tw}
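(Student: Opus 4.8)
The strategy is to prove a cycle of implications, $(1)\Rightarrow(2)\Rightarrow(5)\Rightarrow(4)\Rightarrow(3)\Rightarrow(1)$, exploiting the concrete description of $K_\Aaa$ throughout. The pivotal combinatorial notion is "locally small": $\Aaa$ should be called locally small if for every countable $Y\subs X=\bigcup\Aaa$ the trace family $\setof{A\cap Y}{A\in\Aaa}$ is countable (equivalently, only countably many members of $\Aaa$ meet $Y$ in an infinite set, once one cleans up finite traces). This is exactly the condition needed to control the closures of countable subsets of $K_\Aaa$.

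\medskip
First I would prove $(1)\Rightarrow(2)$. A compact space is monolithic if the closure of every countable subset has a countable network weight; for a scattered space of height $3$ it suffices that the closure of a countable set is countable (indeed second countable, being also first countable off the top point). Take a countable $D\subs K_\Aaa$. Its closure can only add points $\el_A$ for which $A\cap D$ is infinite, together possibly with $\infty$. Writing $Y=D\cap X$, local smallness gives that only countably many $A\in\Aaa$ have $A\cap Y$ infinite, so $\Cl D$ is countable; hence $K_\Aaa$ is monolithic. For $(2)\Rightarrow(5)$ I would use the standard fact that if $K$ is monolithic then so is $C_p(K)$-related dual ball: more directly, $B_{C(K)^*}$ in the weak$^*$ topology is the closed convex hull of $\{\pm\delta_x:x\in K\}$, a continuous image of a product of $K$-like pieces with $[-1,1]$, and monolithicity is preserved by closed subspaces, continuous images and countable products; alternatively cite that $K$ monolithic $\Rightarrow$ $(B_{C(K)^*},w^*)$ monolithic is a known equivalence for scattered (even general) compacta. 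The implication $(5)\Rightarrow(4)$ follows because $(C(K),w)$ embeds into $(B_{C(K)^*},w^*)^{\text{(some countable power)}}$-type object only loosely; cleaner: a Banach space $X$ is weakly Lindel\"of whenever $(B_{X^*},w^*)$ is monolithic and $X$ is WLD-like — but here I would instead argue directly that $C(K_\Aaa)$ is weakly Lindel\"of using that $K_\Aaa$ is scattered of countable height and monolithic, invoking Pol's criterion that $C_p(K)$ is Lindel\"of for $K$ Corson-like; since $K_\Aaa$ monolithic and scattered is in fact Corson compact, $C(K_\Aaa)$ is WLD, hence weakly Lindel\"of.

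\medskip
The heart of the argument is $(4)\Rightarrow(3)$ and $(3)\Rightarrow(1)$. For $(4)\Rightarrow(3)$ I would use the theorem (due to Amir--Lindenstrauss / Valdivia, or the characterization via projectional resolutions) that a weakly Lindel\"of determined Banach space has the controlled SCP; since weakly Lindel\"of $C(K)$ spaces for $K$ scattered are automatically WLD, this gives the controlled SCP. For the crucial converse $(3)\Rightarrow(1)$, suppose $\Aaa$ is not locally small, witnessed by a countable $Y\subs X$ such that uncountably many $A\in\Aaa$ have $A\cap Y$ infinite; pick such $\setof{A_\xi}{\xi<\omega_1}$ with the $A_\xi\cap Y$ forming a family with, after refining, all infinite. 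Let $F\subs C(K_\Aaa)$ be the separable subspace generated by $\{\chi_{\{y\}}:y\in Y\}$ and let $G\subs C(K_\Aaa)^*$ be the separable subspace generated by $\{\delta_{A_\xi\cap Y\text{-related evaluations}}\}$ — more carefully, by the functionals $\mu_\xi$ which are weak$^*$ cluster points of $\{\delta_y:y\in A_\xi\cap Y\}$, each equal to $\delta_{\el_{A_\xi}}$. The point is that $F$ sits inside a separable $C(K_{\Aaa\rest Y})$-type block, but any projection $P$ with $F\subs\im P$ and $\im P$ separable must, because $\im P^*$ is weak$^*$-separable, omit some $\delta_{\el_{A_\xi}}$; then one shows $\el_{A_\xi}$ cannot be "seen" correctly, producing a function in $F$ whose image under $P$ disagrees with it — the standard contradiction. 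The main obstacle, and where I expect to spend the most care, is engineering this last step: making precise how failure of local smallness forces a fixed separable pair $(F,G)$ that no separable-image projection can simultaneously capture, i.e. the quantitative link between "uncountably many large traces" and the non-existence of a single separable complemented subspace absorbing both $F$ and $G$. The natural tool is a $\Delta$-system / pressing-down argument on the $A_\xi\cap Y$ together with the observation that evaluations at the $\el_{A_\xi}$ are weak$^*$-discrete, so they cannot all lie in one separable $\im P^*$, combined with a computation showing each such evaluation is forced into $\im P^*$ by $F$.
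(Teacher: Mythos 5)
Your plan founders on two false claims that carry all the weight of the implications involving (4). You assert that a monolithic scattered $K_\Aaa$ is Corson compact (to get weak Lindel\"ofness) and that a weakly Lindel\"of $C(K)$ with $K$ scattered is WLD (to get the controlled SCP). Both fail for the ladder system space over a stationary $S\subs\omega_1$: it is scattered and monolithic, yet a scattered Corson compact is Eberlein by Alster's theorem, and Proposition \ref{stationaryladder} shows such a ladder system space is not Eberlein; equivalently, its $C(K)$ is weakly Lindel\"of (by the very theorem you are proving) but not WLD. Indeed, if monolithic implied Corson here, conditions (1)--(5) would collapse into Theorem \ref{eberleinlist} and this theorem would have no content. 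So you have no proof of $(2)\Rightarrow(4)$ (or $(5)\Rightarrow(4)$), which is the genuinely hard implication --- the paper obtains it by a forcing absoluteness argument, collapsing $|\Aaa|$ to $\omega_1$ with a countably closed forcing and invoking the $\omega_1$-case from \cite{KoszZiel} and \cite{pollindelof} --- and no proof of $(4)\Rightarrow(3)$. The paper instead proves $(2)\Rightarrow(3)$ by a concrete construction: Lemma \ref{Lleneuoe} produces, from a countable $S\subs\bigcup\Aaa$ and the countably many members of $\Aaa$ clustering on it, a countable $N$ almost containing those members and meeting all others finitely; collapsing $\Cl{X\setminus N}$ to $\infty$ yields a retraction and hence a norm-one projection with separable range capturing prescribed separable $F$ and $G$. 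Some such construction is indispensable and is absent from your outline.

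Two further points. Your argument for $(2)\Rightarrow(5)$ via ``the dual ball is a continuous image of a countable product of copies of $K$ and $[-1,1]$'' is not correct: only finitely supported measures arise that way. It can be salvaged for the scattered $K_\Aaa$ (every measure is atomic with countable support, hence carried by a countable closed set $\Cl S$, and the measures carried by $\Cl S$ form a weak$^*$-metrizable copy of $B_{C(\Cl S)^*}$); the paper sidesteps this by deriving (5) from (3) (Proposition \ref{pghorerfg}) and then $(5)\Rightarrow(2)$ from the embedding $x\mapsto\delta_x$ of $K_\Aaa$ into the dual ball. Finally, in your sketch of $(3)\Rightarrow(1)$ the functionals $\delta_{\el_{A_\xi}}$, $\xi<\omega_1$, span a non-separable subspace, so they cannot serve as the controlled set $G$; take instead $G$ spanned by $\set{\delta_y: y\in Y}$, observe that $\im P^*$ is weak$^*$-closed and therefore \emph{contains} all the uncountably many $\delta_{\el_{A_\xi}}$ (rather than ``omits some''), and get the contradiction from the fact that these form an uncountable discrete subspace of the weak$^*$-second countable set $B_{C(K_\Aaa)^*}\cap\im P^*$ --- which is exactly the content of Proposition \ref{pghorerfg}.
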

\begin{proof}Apply \ref{equivwlindelof} and \ref{equivmonolithic}.
\end{proof}

A locally small almost disjoint family is defined in \ref{locsmalldef},
a compact space $K$ is \emph{monolithic} if all of its separable subsets are second countable.
In case $K$ is scattered, this just means that separable subsets are countable.
The notion of a monolithic space is due to Arkhangel'skii (see~\cite{Ark_book}).
A monolithic space is often called \emph{$\omega$-monolithic}, since this notion clearly generalizes to higher cardinals.
Typical examples of monolithic spaces are \emph{Corson compacta}, namely, compact subsets of $\Sigma$-products $\Sigma(\Gamma)$, where $\Sigma(\Gamma)$ is the subspace of $\Err^\Gamma$ consisting of all functions $\map x \Gamma \Err$ whose support $\supp(x) = \setof{\al}{x(\al)\ne0}$ is countable.
It is obvious that the closure of every countable subset of $\Sigma(\Gamma)$ is compact and second countable (contained in $\Err^S$ for some countable $S \subs \Gamma$).

The above results and  our examples \ref{nonmono} and \ref{nonprojskeleton} 
decide completely which implications hold among the properties of Definition \ref{maindefinition}
namely (1)$\Leftrightarrow$(3), (4)$\Rightarrow $(3), (2)$\Rightarrow$(1) and (4)$\Rightarrow$(2).
The last implication follows from a well known fact that Eberlein compacta are monolithic, being Corson compact.

In general the controlled SCP does not imply the Lindel\"of property
in the weak topology. To see this, take any non-metrizable monolithic compact linearly ordered space $K$ (e.g., $K$ could be a compact Aronszajn line) and
apply Nakhmanson's
theorem \cite{nakhmanson}, which says that for a linearly ordered compact $K$ the Lindel\"of
property of $C(K)$ implies that $K$ is second countable; by~\cite[Thm. 1.6]{KalKub2}, $C(K)$ has the controlled SCP.
It is interesting that a sufficient condition given in~\cite[Thm. 12.33]{fabianetal} for the Lindel\"of property due to Orihuela has a flavor of the controlled SCP.

In Section 4 we obtain some structural results concerning locally small almost disjoint families of countable sets,
of cardinality $\omega_1$, to express them  we need the notion of a ladder system (see the beginning of Section 4)
and we need to introduce some simple definition: Suppose $\Aaa\cup\Bee$ is an almost disjoint family 
$\Aaa'\subseteq \Aaa$ and  $\phi: \Aaa'\rightarrow
\Bee$ is an injection. Then 
$$(\Aaa\setminus \Aaa')\cup\{A'\cup \phi(A'): A'\in \Aaa'\}\cup(\Bee\setminus \phi[\Aaa'])$$
 is denoted $\Aaa+_\phi\Bee$.
The main result of Section 4  is the following structural theorem:

\begin{tw}\label{structural} Suppose that $\Aaa$ is a locally small  almost disjoint family of countable
sets of cardinality $\omega_1$. Then $\Aaa$ is equivalent to the family of the form
$$\bigcup_{n\in \Nat}(\Aaa_n+_{\phi_n}\Bee_n)$$
where
all $\Aaa_n$s are ladder systems, all $\Bee_n$s are disjointifiable
and $\phi_n$s are injections defined  on subfamilies of $\Aaa_n$s.
\end{tw}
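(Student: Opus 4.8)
The plan is to filter $X=\bigcup\Aaa$ by a continuous increasing chain of countable sets and to read the ladder systems and the disjointifiable families off the positions at which members of $\Aaa$ accumulate. First I would fix enumerations $\Aaa=\{A_\xi:\xi<\omega_1\}$ and $X=\{x_\xi:\xi<\omega_1\}$ and define a continuous increasing sequence $\langle X_\alpha:\alpha\leq\omega_1\rangle$ of subsets of $X$ with $X_0=\emptyset$, $X_{\omega_1}=X$, by
\[
X_{\alpha+1}=X_\alpha\cup\{x_\alpha\}\cup A_\alpha\cup\bigcup\{A\in\Aaa:|A\cap X_\alpha|=\omega\}
\]
and unions at limits. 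Local smallness makes each $X_\alpha$ (for $\alpha<\omega_1$) countable, by induction. For $A\in\Aaa$ let $\rho(A)$ be the least $\alpha$ with $|A\cap X_\alpha|=\omega$; since $A$ is countable and $\omega_1$ regular this is a well-defined ordinal in $[1,\omega_1)$. Two facts fall out: $A\subseteq X_{\rho(A)+1}$ (as $A$ enters the union at stage $\rho(A)+1$), and each fibre $\rho^{-1}(\alpha)$ is countable (local smallness again: $\rho(A)=\alpha$ forces $|A\cap X_\alpha|=\omega$, and $X_\alpha$ is countable). Then I split each $A$ as $A=L_A\sqcup C_A$ with $L_A=A\cap X_{\rho(A)}$ and $C_A=A\setminus X_{\rho(A)}\subseteq X_{\rho(A)+1}\setminus X_{\rho(A)}$. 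If $\rho(A)$ is a limit, $L_A$ is infinite, cofinal in $X_{\rho(A)}$, and meets every proper initial segment of the chain in a finite set, so in the well-order of $X$ induced by the chain it is a ladder of order type $\omega$; if $\rho(A)=\beta+1$, then $A\cap X_\beta$ is finite, so modulo a finite set $A$ lies in the two-slice window $X_{\beta+2}\setminus X_\beta$.

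For the limit levels I would enumerate each countable fibre $\rho^{-1}(\mu)=\{A^\mu_0,A^\mu_1,\dots\}$ and, for $j<\omega$, put $\Aaa_j=\{L_{A^\mu_j}:\mu\text{ limit}\}$ and $\Bee_j=\{C_{A^\mu_j}:\mu\text{ limit},\ C_{A^\mu_j}\text{ infinite}\}$. Each $\Aaa_j$ is an almost disjoint family of ladders with at most one member reaching each $X_\mu$, hence (after transporting along the chain well-order) a ladder system; each $\Bee_j$ is pairwise disjoint, its members lying in the pairwise disjoint slices $X_{\mu+1}\setminus X_\mu$, hence disjointifiable. Letting $\phi_j$ send $L_{A^\mu_j}$ to $C_{A^\mu_j}$ whenever the latter is infinite, one checks directly from the definition of $+_{\phi_j}$ that $\Aaa_j+_{\phi_j}\Bee_j$ is equivalent to $\{A^\mu_j:\mu\text{ limit}\}$; so $\bigcup_j(\Aaa_j+_{\phi_j}\Bee_j)$ is equivalent to $\{A\in\Aaa:\rho(A)\text{ a limit}\}$.

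The successor levels are the delicate part, since a priori there are $\omega_1$ of them. Write $\rho(A)=\delta+n+1$ with $\delta$ a limit or $0$ and $n<\omega$, and colour $A$ by $n\bmod 2$. By the previous paragraph $A$ lies, modulo a finite set, in the window $W_A=X_{\rho(A)+1}\setminus X_{\rho(A)-1}$; an elementary computation shows that two $A$'s of the same colour have disjoint windows (within one block $[\delta,\delta+\omega)$ the relevant indices differ by at least two, and a window coming from an earlier block is contained in $X_{\delta'}$ and so lies below all windows of later blocks). As each window carries only a countable fibre $\rho^{-1}(\beta+1)$, each of the two colour classes is, up to finite modifications of its members, a disjoint union of countable almost disjoint families, hence disjointifiable; I adjoin these with empty ladder systems and empty $\phi$'s. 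Relabelling all the pieces by $\Nat$ gives $\Aaa$ equivalent to $\bigcup_{n\in\Nat}(\Aaa_n+_{\phi_n}\Bee_n)$.

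The main obstacle is exactly this passage from transfinitely many filtration levels to countably many families: local smallness is used once to keep the $X_\alpha$ countable and once more to make the fibres $\rho^{-1}(\alpha)$ countable, and then it is the reindexing of the limit fibres by the integer $j$ and the parity colouring of the successor levels that bound the count by $\omega$. A secondary point needing care is checking that the $\Aaa_j$ really satisfy the notion of a ladder system from the start of Section~4 — for which one identifies $\bigcup\Aaa_j$ with a subset of $\omega_1$ via the chain well-order, so that each $L_{A^\mu_j}$ becomes a genuine ladder attached to a limit ordinal, with distinct members on distinct ordinals.
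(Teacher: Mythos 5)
Your proposal is correct and follows essentially the same route as the paper: a continuous filtration of $\bigcup\Aaa$ by countable sets (with local smallness keeping each level and each fibre of the rank function countable), a split of each $A$ into a ladder below its rank level plus a tail in the next slice, and an integer enumeration of the countable fibres to produce countably many ladder systems and pairwise disjoint families. The only real divergences are cosmetic: the paper works on a club of ordinals with a closure property $\mathcal P$ that makes every ``successor-rank'' member fit in a single interval mod finite, so your two-slice windows and parity colouring are replaced by a direct point-countability argument, and your remaining loose end (arranging that ladder points avoid $E_{\omega_1}(\omega)$) is handled there by assuming from the outset that $\bigcup\Aaa$ consists of successor ordinals.
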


In other words, locally small almost disjoint families are obtained in a canonical way from countably many ladder systems and pairwise disjoint families. As noted at the end of Section 4, if all the families
$\Aaa_n$ are disjointifiable the $\Aaa$ is disjointifiable as well.
One should note that the structure of non locally small almost disjoint families
of sets, even in the case of  subsets of $\Nat$ is a much harder task with extensive literature devoted to it.


Analyzing Banach spaces induced by the ladder system spaces in Section 4, we obtain the following

\begin{tw}\label{nonprojskeleton} There is an almost disjoint family $\Aaa$ such that 
$C(K_\Aaa)$  has controlled SCP and continuous SCP but it has neither a PRI nor a projectional skeleton.
\end{tw}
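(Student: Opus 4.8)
The plan is to take for $\Aaa$ a suitably ``entangled'' member of the class described in Theorem~\ref{structural} --- concretely a gluing of two ladder systems with a twisted pairing --- chosen so that $\Aaa$ is locally small, which by Sections 2--3 automatically yields the two SCP-type properties, while the twist destroys the coherent families of complemented separable subspaces that a single ladder system enjoys. For the easy half: build $\Aaa$ on the union of two disjoint copies $\omega_1^0,\omega_1^1$ of $\omega_1$ as $\setof{L_\delta\cup M_{\psi(\delta)}}{\delta<\omega_1\text{ a limit}}$, where $\seqof{L_\delta}{\delta}$ is a ladder system on $\omega_1^0$, $\seqof{M_\eta}{\eta}$ a ladder system on $\omega_1^1$, and $\psi$ a bijection between the limit ordinals of the two copies, all built by a back-and-forth recursion along $\omega_1$ that inserts the entanglement described below. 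Whatever the details of that recursion, $\Aaa$ is locally small: if $D\subs K_\Aaa$ is countable it is bounded in each copy, and since $\psi$ is a bijection only countably many $\delta$ have $\psi(\delta)$ below the bound of $D$ in $\omega_1^1$, so only countably many members of $\Aaa$ meet $D$ in an infinite set and $\ovr D$ is countable; hence $K_\Aaa$ is monolithic. By Theorem~\ref{monolithiclist}, $C(K_\Aaa)$ has the controlled SCP, which trivially implies the SCP, so by Theorem~\ref{scpimpliescscp} $C(K_\Aaa)$ also has the continuous SCP. Since a PRI on a space of density $\omega_1$ is in particular a projectional skeleton (take the chain of its ranges together with the PRI projections, which satisfies Definition~\ref{maindefinition}(4)), it remains to prove that $C(K_\Aaa)$ has no projectional skeleton.

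For the hard half, suppose towards a contradiction that $\Ef$ is a projectional skeleton of $C(K_\Aaa)$ with coherent projections $\setof{P_E}{E\in\Ef}$; we may assume these are uniformly bounded. For $f\in C(K_\Aaa)$ put $\supp(f)=\setof{y\in\bigcup\Aaa}{f(y)\ne f(\infty)}$, a countable set bounded in each of the two copies, and for separable $E\subs C(K_\Aaa)$ put $\supp(E)=\bigcup_{f\in E}\supp(f)$, again countable. Using that $\Ef$ is up-directed and $\sigma$-closed (so that every separable subspace sits inside a member of $\Ef$), run the standard closing-off argument along a continuous chain of countable elementary submodels of a large enough $H(\theta)$ containing $\Ef$, $\Aaa$, $\psi$ and the two ladder systems. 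This yields a club $C\subs\omega_1$ and members $E_\gamma\in\Ef$ ($\gamma\in C$) with $E_\gamma=\setof{f}{\supp(f)\subs Y_\gamma}$, where $Y_\gamma:=\supp(E_\gamma)$, with the $Y_\gamma$ increasing and $\sigma$-continuous along $C$ and $\bigcup_{\gamma\in C}Y_\gamma=\bigcup\Aaa$, and with $E_\gamma\subs E_{\gamma'}$ and $P_{E_\gamma}P_{E_{\gamma'}}=P_{E_{\gamma'}}P_{E_\gamma}=P_{E_\gamma}$ whenever $\gamma<\gamma'$ in $C$; moreover $P_{E_\gamma}f\to f$ for every $f$, so the $E_\gamma$ form a ``PRI-like'' coherent chain built from the \emph{scrambled} cross-sections $Y_\gamma$.

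The contradiction is forced by the twist. The recursion building $\Aaa$ is arranged so that, for stationarily many $\gamma\in C$, there is a member $A=L_\delta\cup M_{\psi(\delta)}\in\Aaa$ with $A\cap Y_\gamma$ infinite while $A\not\subs Y_\gamma$ (and then automatically $A\setminus Y_\gamma$ infinite, since $\delta$ may be chosen above $\sup(Y_\gamma\cap\omega_1^0)$, making $L_\delta\setminus Y_\gamma$ infinite). For such $A$ every $f\in E_\gamma$ satisfies $f(\el_A)=f(\infty)$ --- it is constantly $f(\infty)$ on the infinite set $A\setminus Y_\gamma$, which accumulates at $\el_A$ --- so $P_{E_\gamma}^*\delta_{\el_A}$ annihilates $E_\gamma$ and hence is $0$; on the other hand, for any $\gamma'\in C$ with $A\subs Y_{\gamma'}$ we have $\chi_{\sn{\el_A}\cup A}\in E_{\gamma'}$ and so $P_{E_{\gamma'}}^*\delta_{\el_A}\ne 0$. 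Apply Fodor's pressing-down lemma to this stationary set of $\gamma$, via a regressive function recording the least $\gamma'$-level at which the dual projection is nonzero on $\delta_{\el_A}$: one obtains two levels $\gamma<\gamma'$ in $C$ and one $A\in\Aaa$ with $A\subs Y_{\gamma'}$, $A\not\subs Y_\gamma$, $A\cap Y_\gamma$ infinite, which together with $P_{E_\gamma}^*=P_{E_{\gamma'}}^*\cmp P_{E_\gamma}^*$ and $P_{E_\gamma}^*\delta_{\el_A}=0\ne P_{E_{\gamma'}}^*\delta_{\el_A}$ is a contradiction. Equivalently, coherence forces $P_{E_\gamma}$ to coincide near $\el_A$ with the operator ``restrict to $Y_\gamma$, extend by the value at $\infty$'', which fails to be continuous at $\el_A$ precisely because $A$ straddles $Y_\gamma$ with both $A\cap Y_\gamma$ and $A\setminus Y_\gamma$ infinite.

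I expect the real work to lie in two places. First, the back-and-forth construction of $\psi$ and the two ladder systems must install the straddling configuration of the previous paragraph below stationarily many levels of \emph{every} club that the hypothetical skeleton could produce --- and robustly against all possible $\Ef$, since the cross-sections $Y_\gamma$ are not under our control --- while keeping $\Aaa$ locally small; these demands pull against one another and require delicate bookkeeping. Second, and this is the technical heart, one must check that the closing-off genuinely pins the skeleton members down to the exact form $\setof{f}{\supp(f)\subs Y_\gamma}$ with coherent projections. The point is that for a single, untwisted ladder system on $\omega_1$ the argument simply does not bite: there each $Y_\gamma$ is an honest initial segment, the canonical cut operators are continuous, and the $E_\gamma$'s assemble into a commutative projectional skeleton. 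It is exactly the twist, interacting with the coherence requirement, that forbids such a skeleton (and hence any PRI).
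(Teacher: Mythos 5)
Your proposal has the right easy half (locally small $\Rightarrow$ monolithic $\Rightarrow$ controlled SCP $\Rightarrow$ SCP $\Rightarrow$ continuous SCP, and a PRI on a space of density $\aleph_1$ yields a projectional skeleton), but the hard half rests on a false premise and contains the essential gaps that you yourself flag. The false premise is your closing claim that for a single, untwisted ladder system the canonical cut operators are continuous and ``the $E_\gamma$'s assemble into a commutative projectional skeleton.'' They do not: if $\Cee=\sett{C_\al}{\al\in S}$ is a ladder system on a stationary $S\subs\omega_1$ and $\al\in S$, then $\el_{C_\al}$ lies in the closure of $[0,\al)$ while the tail set $L_\al$ must be collapsed to $\infty$, so the natural projection onto $C(K_\Cee||L_\al)$ needs a correction term, has norm $2$, and these projections do not commute. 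In fact the paper's example \emph{is} a single ladder system on a stationary set; no twist is needed. The paper rules out a projectional skeleton by a soft argument proved earlier: a $1$-projectional skeleton (after renorming) puts the dual ball in the class $\mathcal R$ of \cite{retractions}, which is either Corson or contains $[0,\omega_1]$; Lemma~\ref{nocopyw1} excludes $[0,\omega_1]$, so $K_\Aaa$ is Corson, hence Eberlein by Alster (Proposition~\ref{eberleinandskeletons}); and a ladder system on a stationary set is not disjointifiable by Fodor, hence not Eberlein (Proposition~\ref{stationaryladder}). The continuous SCP is then verified by the explicit continuous chain of $2$-complemented subspaces $C(K_\Cee||L_\al)$.

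Even taken on its own terms, your argument against a hypothetical skeleton is not a proof. First, a closing-off/elementary-submodel argument does not pin a skeleton member $E_\gamma$ down to the exact form $\setof{f}{\supp(f)\subs Y_\gamma}$; a member of a projectional skeleton is merely some separable subspace admitting a compatible projection, and nothing forces it to be the full space of functions supported in its support set -- this is precisely the ``technical heart'' you acknowledge but do not supply. Second, the requirement that the back-and-forth construction install straddling configurations below stationarily many levels of \emph{every} club of cross-sections $\seqof{Y_\gamma}{\gamma\in C}$ that an arbitrary skeleton might produce is not carried out, and it is not clear it can be by a plain recursion of length $\omega_1$. Third, the functional computation is off: if every $f\in E_\gamma$ satisfies $f(\el_A)=f(\infty)$, then $\delta_{\el_A}$ agrees with $\delta_\infty$ on $E_\gamma$ rather than annihilating it, so $P_{E_\gamma}^*\delta_{\el_A}=P_{E_\gamma}^*\delta_\infty\ne 0$; the argument would have to be rerun with $\delta_{\el_A}-\delta_\infty$, and the subsequent Fodor step would need to be made precise. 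The recommended fix is to discard the twisted construction entirely and follow the paper's route through Lemma~\ref{nocopyw1} and Propositions~\ref{eberleinandskeletons} and~\ref{stationaryladder}.
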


Actually the above result provides the first example of  a Banach space with the continuous SCP, not isomorphic to a space with a PRI. This question was raised in \cite{Kub_lines} and was motivated by
an observation that in a Banach space of density $\aleph_1$, a skeleton of $1$-complemented separable subspaces naturally leads to a PRI. 
We should  note here that in general the existence of a PRI in the spaces $C(K_\Aaa)$ does 
not imply even the SCP:

\begin{ex}
Let $\Bee \subs [\omega]^\omega$ be an almost disjoint family of size $\omega_1$ and let $\Aaa \subs [\omega_2 \setminus \omega]^\omega$ be a disjoint family of size $\omega_2$.
Let $\Cee = \Aaa \cup \Bee$.
The space $C(K_\Bee)$ is a complemented subspace of $C(K_\Cee)$ and the canonical copy of $c_0$ is not complemented in $C(K_\Bee)$ (see the proof of Proposition~\ref{uncomplemented} for details).
Thus, $C(K_\Cee)$ fails the SCP.
On the other hand, it is easy to construct a PRI on $C(K_\Cee)$.
In fact, let $\sett{P_\al}{\al < \omega_2}$ be a PRI on $C(K_\Aaa)$ and, knowing that $C(K_\Cee) = C(K_\Aaa) \oplus C(K_\Bee)$, extend
$P_{\omega_1}$ by enlarging its kernel to $\ker{P_{\omega_1}} \oplus C(K_\Bee)$. Finally, replace each $P_\al$ for $\al > \omega_1$ by the appropriate projection onto $\im P_\al \oplus C(K_\Bee)$.
\end{ex}

Having in mind the results of Section 3, the only property from \ref{maindefinition} which can hold for
non-monolithic $K_\Aaa$s is the SCP (and equivalent continuous SCP). It turns out that 
such an example exists, its construction is the subject matter of Section 6: 

\begin{tw}\label{nonmono} There is an almost disjoint family $\Aaa$ which is not locally small, i.e.,
none of the conditions of \ref{monolithiclist} like the controlled SCP are satisfied by $C(K_\Aaa)$, yet
the space $C(K_\Aaa)$  has the SCP.
\end{tw}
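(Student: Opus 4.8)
The plan is to build an almost disjoint family $\Aaa$ on an uncountable ground set $X$ together with an explicit witnessing structure that produces separable complemented subspaces of $C(K_\Aaa)$, while arranging that some separable subset of $K_\Aaa$ has uncountable closure (so $K_\Aaa$ is not monolithic and, by Theorem~\ref{monolithiclist}, $\Aaa$ is not locally small). By Theorem~\ref{scpimpliescscp} it suffices to produce either the SCP or the continuous SCP; I will aim for the SCP directly, since that is the weaker-sounding and more flexible property. The ground set should be chosen so that $\Aaa$ contains a subfamily behaving like a non-locally-small family — e.g. uncountably many $A\in\Aaa$ all meeting a fixed countable set $D\subs X$ in an infinite set — which forces a separable (indeed, the closure of $D$) subset of $K_\Aaa$ with uncountably many limit points $\el_A$. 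This is what breaks monolithicity.

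The heart of the construction is to control the complemented separable subspaces. Given a countable $Z\subs C(K_\Aaa)$, the separable subspace it generates ``sees'' only countably many coordinates of $X$ and countably many of the functionals $\delta_{\el_A}$; one wants to enclose this in a separable subspace of the form $C(L)$ for a suitable countable closed-like piece $L$ of $K_\Aaa$, together with a norm-one projection $C(K_\Aaa)\to C(L)$. The standard mechanism for producing such projections on $C(K)$ for $K$ scattered of height $3$ is via retractions $r\colon K_\Aaa\to L$ onto a closed subspace $L$; composition with $r$ gives an isometric embedding $C(L)\hookrightarrow C(K_\Aaa)$, and one needs a bounded linear left inverse, i.e. an averaging operator, which for these $\Psi$-type spaces can be built by hand by sending the unit mass at an isolated point $x\notin L$ to an appropriate convex combination (or difference of point masses corresponding to $\el_A,\el_B$) determined by the almost disjoint structure. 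The key design requirement on $\Aaa$ is therefore: for every countable $S\subs X$ and every countable $\Bee\subs\Aaa$ there is a countable $L\subs K_\Aaa$, clopen-modulo-$\infty$ or at least retract of $K_\Aaa$, containing $S\cup\el_\Bee$, over which such an averaging projection exists with uniformly bounded norm. Arranging this simultaneously with non-local-smallness is exactly the tension in the construction.

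Concretely, I would try to take $X$ to be (a subset of) $\omega_1\times\omega$ or a tree like $\omega^{<\omega}$ together with a set of branches, and let $\Aaa$ consist of ``generic'' countable sets built by a transfinite recursion of length $\omega_1$ (or by a forcing/$\diamondsuit$-type bookkeeping if a ZFC construction proves too rigid), at stage $\alpha$ adding a new set $A_\alpha$ that (i) is almost disjoint from the previous ones, (ii) is chosen so that the ``pre-images under projections'' stay countable — i.e. $A_\alpha$ is almost contained in the union of the countable pieces already committed to — so that adding it does not destroy the existence of the separable complemented enclosure of any previously handled separable subspace, yet (iii) infinitely often is forced to meet a fixed countable $D$, killing monolithicity. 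Then one proves by a closing-off argument that every separable subspace of $C(K_\Aaa)$ is captured by some stage $\alpha$ and lies in the complemented $C(L)$ manufactured there.

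The main obstacle I expect is step (ii)–(iii) coexisting: ensuring that the separable complemented subspaces can be taken of a ``controlled'' countable shape even though uncountably many $\el_A$ cluster at the countable set $D$ — naively, the closure of $D$ then contains uncountably many $\el_A$, and any $C(L)$ with $D\subs L$ must contain all of them, so it cannot be separable. The resolution must be that the relevant separable subspace $F$, although generated by functions supported near $D$, does \emph{not} require $L$ to contain the closure of $D$: one encloses $F$ in $C(L)$ where $L$ is a countable \emph{retract} of $K_\Aaa$ that collapses the uncountably many $\el_A$'s clustering at $D$ to a single point (or to $\infty$), using that $F$ consists of functions constant off a fixed countable set on all but finitely many of the relevant $A$'s. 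Making a single retraction do this for all $\el_A$ simultaneously, and checking it admits a bounded averaging operator, is the delicate point; I would isolate it as a lemma of the form ``for every countable $Z\subs C(K_\Aaa)$ there is a countable retract $L\sups$ (supports of $Z$) with an averaging operator of norm $\loe M$,'' and the construction of $\Aaa$ is reverse-engineered so that this lemma holds.
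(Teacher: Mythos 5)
Your high-level picture matches the paper's: break monolithicity by forcing uncountably many $A\in\Aaa$ to meet a fixed countable set $D$ infinitely, and get the SCP by showing that every separable subspace sits inside a separable complemented subspace consisting of functions that are constant on the uncountable ``remainder''. But the concrete mechanism you propose does not work, and the key combinatorial idea is missing. First, the lemma you want to isolate --- for every countable $Z\subs C(K_\Aaa)$ a \emph{countable retract} $L$ of $K_\Aaa$ containing the supports of $Z$, with a bounded averaging operator --- is false for exactly the $Z$ you care about: if $L$ is closed, contains $D$, and $r\colon K_\Aaa\to L$ is a retraction, then for each of the uncountably many $A$ with $A\cap D$ infinite the point $\el_A$ is a limit of points of $D\subs L$, hence $r(\el_A)=\el_A$ and $\el_A\in L$, so $L$ cannot be countable. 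Your parenthetical fix (``collapse the $\el_A$'s to a single point'') describes a quotient, not a retract, and the complementability of the subspace of functions constant on the collapsed set is precisely what has to be proved --- it is not automatic from the quotient map. The paper resolves this by going in the opposite direction: it builds a retraction of $K_\Aaa$ \emph{onto the uncountable closed set} $\cl(X\setminus Y)$ (for suitable countable $Y$), and then $P(f)=f-f\circ r+f(\infty)$ is a norm-$\loe 3$ projection onto $C(K_\Aaa\|\cl(X\setminus Y))$, which is separable because its elements are determined by their values on the countable set $Y\cup\el_{\Bee}\cup\sn\infty$ with $\Bee=\setof{A}{A\subs^* Y}$; every separable subspace is caught by such a $Y$ since each $f\in C(K_\Aaa)$ is eventually constant.

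Second, for such a retraction to be continuous you need a strong structural property of $\Aaa$ that your recursion does not identify: an injection $F\colon Y\to X\setminus Y$ with $F[A\cap Y]=^*A\cap F[Y]$ for every $A$ not almost contained in $Y$ --- that is, each $A$'s trace on the countable set $Y$ must be \emph{duplicated}, via a fixed injection, inside $A\cap(X\setminus Y)$, so that sending $y\mapsto F(y)$ and $\el_B\mapsto\infty$ (for $B\subs^* Y$) preserves all the convergence relations. The paper arranges this on $X=\omega_1$ by fixing bijections $F_\xi\colon[0,\lambda_\xi)\to[\lambda_\xi,\lambda_{\xi+1})$ and constructing $A_\eta\subs\lambda_\eta$ by a coherent transfinite recursion with $A_\eta\cap[\lambda_\xi,\lambda_{\xi+1})=^*F_\xi[A_\eta\cap\lambda_\xi]$ for all $\xi<\eta$ (so each set is, block by block, a copy of its own initial trace), while keeping $A_\eta\cap[0,\omega)$ infinite to kill monolithicity; the delicate point is preserving this coherence at limit stages, and the whole construction is carried out in ZFC with no bookkeeping principle. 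Your conditions (i)--(iii) do not capture this self-replication requirement, and without it neither your averaging operators nor any retraction will be continuous at the points $\el_A$ and at $\infty$.
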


This is a different behavior than for linearly ordered compact spaces $K$, as it was proved in \cite{KalKub2}
that for these $C(K)$s the SCP is equivalent to controlled SCP and
to the monolithicity of $K$ (Theorem 1.6 of \cite{KalKub2}). Also this cannot happen if $K$ is separable
because then $\bigcup\Aaa$ is countable and we are in the realm of subsets of $\Nat$ (\ref{uncomplemented}).

In the different direction from our present research is the question how few complemented
separable subspaces could appear in the space of the form $C(K_\Aaa)$. Of course then we 
focus on spaces without SCP.
Some separable subspaces must be complemented as $K_\Aaa$s have nontrivial
convergent sequences because they are scattered, so we have many complemented copies of $c_0$.
In \cite{mrowka} the second author assuming the continuum
hypothesis constructed an almost disjoint family such that the only decompositions of $C(K_\Aaa)=E\oplus F$
are for $E\sim c_0$ and $F\sim C(K_\Aaa)$ or vice versa. This space does not have SCP
nor is weakly Lindel\"of.  

We shall use standard set-theoretic notation. For example, an ordinal $\al$ will be treated as the interval of ordinals $[0,\al)$. According to this notation, the set of natural numbers is $\omega$.
The symbol $\iso$ will denote the relation of
 being homeomorphic (when dealing with compact spaces) and  $\equiv$ and $\sim$ the relations of being isometric or isomorphic as Banach spaces, respectively.
$|X|$ denotes the cardinality of $X$. The symbol $\triangle$ denotes the symmetric difference, i.e., $A \triangle B = (A \setminus B) \cup (B \setminus A)$.
At some point we shall use Fodor's Pressing Down Lemma, so we state it below for the sake of completeness.

Let $\kappa$ be a regular cardinal.
Recall that a set $A \subs \kappa$ is \emph{closed} if $\sup C \in A$ whenever $C \subs A\cap \al$ for some $\al < \kappa$ and $A$ is \emph{unbounded} if $\sup A = \kappa$.
A closed and unbounded set is called briefly a \emph{club}.
A set $S$ is \emph{stationary} in $\kappa$ if $S \cap A \nnempty$ whenever $A \subs \kappa$ is closed and unbounded.
All these notions, as well as the proof of Fodor's Lemma, can be found, e.g., in \cite{jech}.

\begin{lm}[Fodor's Pressing Down Lemma]
Let $\kappa$ be an uncountable regular cardinal and let $S \subs \kappa$ be a stationary set.
Let $\map f S \kappa$ be a regressive function, that is, $f(\xi) < \xi$ for every $\xi \in S$.
Then there exists a stationary set $T \subs S$ such that $f \rest T$ is constant.
\end{lm}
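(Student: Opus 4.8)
The plan is to reduce Fodor's Lemma to a basic fact about clubs, namely that the diagonal intersection of $\kappa$-many clubs is again a club when $\kappa$ is an uncountable regular cardinal; this in turn rests on the fact that the intersection of fewer than $\kappa$ clubs in $\kappa$ is a club, which follows directly from the definitions by a routine closing-off argument using the regularity of $\kappa$. Granting this machinery, I would argue by contradiction. Suppose $\map f S \kappa$ is regressive but $f^{-1}(\set{\al})$ is non-stationary for every $\al<\kappa$; the goal is to show that then $S$ itself is non-stationary, contradicting the hypothesis.

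For each $\al<\kappa$, since $f^{-1}(\set\al)$ is not stationary, fix a club $C_\al\subs\kappa$ disjoint from $f^{-1}(\set\al)$, that is, $f(\xi)\ne\al$ for every $\xi\in C_\al$. Let $C$ be the diagonal intersection $C = \setof{\xi<\kappa}{\xi\in C_\al \text{ for all }\al<\xi}$, which is a club by the fact quoted above. I claim $C\cap S$ is empty once we pass slightly inside $C$: if $\xi\in C$ and $\xi\in S$, then for every $\al<\xi$ we have $\xi\in C_\al$, hence $f(\xi)\ne\al$; since also $f(\xi)<\xi$ by regressivity, this says $f(\xi)$ is an ordinal below $\xi$ different from every ordinal below $\xi$, which is absurd. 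Therefore $C\cap S=\emptyset$, so $S$ is disjoint from a club and hence non-stationary, the desired contradiction. Consequently some fiber $T = f^{-1}(\set\al)$ is stationary, and $f\rest T$ is constant with value $\al$.

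The main obstacle, if one wants a fully self-contained proof, is establishing that the diagonal intersection of a $\kappa$-sequence of clubs is a club: unboundedness needs a closing-off argument of length $\kappa$ (build an increasing sequence meeting more and more of the $C_\al$'s and take sups, using $\cf(\kappa)=\kappa$), and closedness needs care at limit points of $C$ of cofinality less than $\kappa$. However, since the paper explicitly cites \cite{jech} for all these notions and even for the proof of Fodor's Lemma itself, I would simply invoke the standard diagonal-intersection lemma from there and present only the short contradiction argument above. The only genuinely non-formal point in that argument is recognizing that membership of $\xi$ in the diagonal intersection forces $f(\xi)$ to avoid \emph{all} of $\xi$, which is exactly where the "diagonal" nature of the construction is used.
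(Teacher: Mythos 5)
Your proof is correct: the paper itself gives no proof of Fodor's Lemma, deferring entirely to the cited reference \cite{jech}, and your argument is precisely the standard diagonal-intersection proof found there (choose clubs $C_\al$ avoiding each fiber, form the diagonal intersection, and observe that any $\xi$ in it belonging to $S$ would have $f(\xi)<\xi$ yet $f(\xi)\ne\al$ for all $\al<\xi$, which is absurd). The only cosmetic quibble is the phrase ``once we pass slightly inside $C$'' --- your argument in fact shows $C\cap S=\emptyset$ outright, so no such passage is needed.
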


We shall use Fodor's Lemma for $\kappa = \omega_1$ only.
Roughly speaking, when dealing with a family of (usually countable) structures living in $\omega_1$ and indexed by countable ordinals, the Pressing Down Lemma allows us to refine the family (still having an uncountable one) fixing finitely many parameters defined as regressive functions.

\section{Weakly compactly generated spaces $C(K_\Aaa)$}

\begin{df}\label{equivdisjointdef} We say that two almost disjoint families of countable sets $\Aaa$ and $\Bee$ are \emph{equivalent} if
the Boolean algebra generated by $\Aaa$ and finite subsets of $\bigcup \Aaa$
is isomorphic to the Boolean algebra generated by $\Bee$ and the finite subsets of $\bigcup\Bee$.
We say that an almost disjoint family of countable sets is \emph{disjointifiable} if
$\Aaa$ is equivalent to a pairwise disjoint family.
\end{df}

Note that if $i$ is a Boolean isomorphism as in the definition above, then $i$ is 
a bijection between $\Aaa$ and $\Bee$ as well as $i$ induces a bijection 
between $\bigcup\Aaa$ and $\bigcup\Bee$.

\begin{lm}\label{lemmadisjoint} Suppose that $\Aaa$ is an almost disjoint family of countably infinite sets.
\begin{enumerate}
	\item[(1)]  $\Aaa$ is disjointifiable if and only if
for every $A\in \Aaa$ there is a finite set $F_A$ such that $\{A\setminus F_A: A\in \Aaa\}$
is a pairwise disjoint family.
	\item[(2)] $\Aaa$ and $\Bee$ are equivalent, if and only if $K_\Aaa$ and $K_\Bee$ are homeomorphic.
	\item[(3)] If $\Aaa$ is point countable, then it is disjointifiable.
	\item[(4)] Suppose $\Aaa=\bigcup_{n\in\Nat} \Aaa_n$ is an almost disjoint family and
each subfamily $\Aaa_n$ is disjointifiable, then $\Aaa$ is disjointifiable.
\end{enumerate}
\end{lm}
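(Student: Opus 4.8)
The four parts are best handled in the order (2), (1), (3), (4): part (2) supplies the topological picture used in the nontrivial direction of (1), and (1) reduces (3) and (4) to a purely combinatorial disjointification. Write $\mathfrak A$ for the Boolean algebra generated by $\Aaa$ together with $\fin{\bigcup\Aaa}$, so that, as recalled in the introduction, $K_\Aaa$ is the Stone space of $\mathfrak A$; similarly $\mathfrak B$. Then (2) is immediate from Stone duality: $\Aaa$ and $\Bee$ are equivalent iff $\mathfrak A$ and $\mathfrak B$ are isomorphic iff their Stone spaces $K_\Aaa$ and $K_\Bee$ are homeomorphic.

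For the backward direction of (1), given finite sets $F_A$ with $\setof{A\setminus F_A}{A\in\Aaa}$ pairwise disjoint, first arrange $\bigcup_{A\in\Aaa}(A\setminus F_A)=\bigcup\Aaa$: if some $x\in\bigcup\Aaa$ lies in $F_A$ for every $A\ni x$, pick one such $A$ and delete $x$ from $F_A$; then $x$ belongs to exactly one member of $\setof{A\setminus F_A}{A\in\Aaa}$, so disjointness is preserved and the $F_A$ stay finite. Now $\Bee:=\setof{A\setminus F_A}{A\in\Aaa}$ is a pairwise disjoint family of countably infinite sets with $\bigcup\Bee=\bigcup\Aaa$, and each $A\setminus F_A$ and each $A$ differ by a finite subset of $\bigcup\Aaa$, so $\mathfrak A=\mathfrak B$ and $\Aaa$ is equivalent to the pairwise disjoint family $\Bee$. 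For the forward direction, suppose $\Aaa$ is equivalent to a pairwise disjoint $\Dee$ and, by (2), fix a homeomorphism $h\colon K_\Aaa\to K_\Dee$ (the case of finite $\Aaa$ being trivial, assume $\Aaa$ infinite, so $K_\Aaa$ has height $3$). The Cantor--Bendixson analysis shows that $h$ maps isolated points to isolated points, the unique rank-$2$ point $\infty$ to $\infty$, and $\setof{\el_A}{A\in\Aaa}$ bijectively onto $\setof{\el_D}{D\in\Dee}$. For each $A$ the set $\sn{\el_A}\cup A$ is clopen, so $h[\sn{\el_A}\cup A]$ is a clopen subset of $K_\Dee$ avoiding $\infty$ and containing a single rank-$1$ point; such a set has the form $(\sn{\el_D}\cup(D\setminus F'))\cup E$ with $F',E$ finite. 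Hence, restricting $h$ to the isolated points (a bijection $\bigcup\Aaa\to\bigcup\Dee$), the set $E_A:=h^{-1}[D]\cap\bigcup\Aaa$ satisfies that $A\triangle E_A$ is finite, and $\setof{E_A}{A\in\Aaa}$ is pairwise disjoint as the preimage of $\Dee$ under an injection; so $F_A:=A\setminus E_A$ is finite and $\setof{A\setminus F_A}{A\in\Aaa}=\setof{A\cap E_A}{A\in\Aaa}$ is pairwise disjoint.

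For (3) and (4) the key remark is that a point countable almost disjoint family has a \emph{locally countable} intersection graph (vertices $\Aaa$, an edge between $A\ne B$ iff $A\cap B\nnempty$), since the neighbours of $A$ are $\bigcup_{x\in A}\setof{B}{x\in B}$, a countable union of countable sets; hence all its connected components are countable, and members of distinct components are disjoint. Within each countable component $\setof{D_n}{\ntr}$ the standard disjointification applies: $F_{D_n}:=D_n\cap\bigcup_{k<n}D_k$ is finite and makes the component pairwise disjoint. Collecting these $F_A$ across components and invoking (1) proves (3). For (4), a subfamily of a disjointifiable family is disjointifiable (by (1)), so replacing each $\Aaa_n$ by $\Aaa_n\setminus\bigcup_{m<n}\Aaa_m$ we may assume $\Aaa=\bigsqcup_n\Aaa_n$ with the $\Aaa_n$ pairwise disjoint as families; by (1) pick finite $F_A$ making each $\setof{A\setminus F_A}{A\in\Aaa_n}$ pairwise disjoint. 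Writing $n(A)$ for the index with $A\in\Aaa_{n(A)}$, let $H$ be the graph with an edge between $A$ and $B$ whenever $n(A)\ne n(B)$ and $(A\setminus F_A)\cap(B\setminus F_B)\nnempty$ (a finite set). Then $H$ is again locally countable, because for fixed $A$ and fixed $m\ne n(A)$ the pairwise disjoint sets $\setof{B\setminus F_B}{B\in\Aaa_m}$ meet the countable set $A\setminus F_A$ in only countably many members; so the components of $H$ are countable. Enumerating a component $\setof{D_n}{\ntr}$, put $G_{D_n}:=F_{D_n}\cup\bigcup\setof{(D_k\setminus F_{D_k})\cap(D_n\setminus F_{D_n})}{k<n,\ \{D_k,D_n\}\in E(H)}$ (finite), and $G_A:=F_A$ for $A$ in no edge of $H$. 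A routine case analysis — same $\Aaa_n$; different $\Aaa_n$ with disjoint $F$-trimmed sets; different $\Aaa_n$ inside one $H$-component — shows $\setof{A\setminus G_A}{A\in\Aaa}$ is pairwise disjoint, and (1) gives (4).

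The one genuinely delicate point is (4): after the members of the various $\Aaa_n$ have been individually trimmed to be internally disjoint, the surviving cross intersections (each finite, but possibly infinitely many meeting a fixed set) must all be absorbed simultaneously, and what makes this possible is exactly that the auxiliary graph $H$ is locally countable and hence has only countable connected components — the same device as in (3). The forward direction of (1) is less deep but does require transporting the combinatorial data faithfully through the homeomorphism, and everything there hinges on first pinning down the Cantor--Bendixson levels.
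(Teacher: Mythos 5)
Your proof is correct, and for parts (1)--(3) it follows essentially the same route as the paper: (2) is Stone duality; the forward direction of (1) transports the disjoint family back through the homeomorphism via the Cantor--Bendixson levels (the paper compresses this to the remark that the homeomorphism induces a bijection $A\mapsto i(A)$ with $i(A)\triangle A$ finite, which you verify in detail, correctly, via the clopen sets $\{\el_A\}\cup A$); and (3) is the paper's argument that the equivalence classes generated by nonempty intersection (your connected components) are countable by point countability and countable families are disjointifiable. The only genuine divergence is in (4). The paper replaces each $\mathcal{A}_n$ by an equivalent pairwise disjoint family $\mathcal{B}_n$ of trimmed sets, observes that $\bigcup_n\mathcal{B}_n$ is point countable (each point lies in at most one member per $n$), and simply invokes (3); you instead keep the finite trim sets $F_A$ explicit, build the auxiliary locally countable graph $H$ recording surviving cross-intersections, and perform a second round of trimming inside each countable $H$-component. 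Both arguments rest on the same engine (local countability forces countable components), but the paper's reduction to (3) is shorter, while your version has the minor advantage of never having to re-verify that the union of the trimmed families is still an almost disjoint family equivalent to $\mathcal{A}$ --- everything stays at the level of concrete finite sets and the conclusion follows from the criterion in (1). Your "delicate point" about absorbing infinitely many finite cross-intersections is exactly what the paper's point-countability observation dispatches in one line.
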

\begin{pf}
(1) Suppose $\map i {K_\Aaa}{K_\Bee}$ is a homeomorphism.
Then $i$ induces a bijection from $\Aaa$ onto $\Bee$ so that $i(A) \triangle A$ is finite for every $A \in \Aaa$.
Thus, it is obvious that a disjointifiable family can be ``corrected" to a disjoint family by subtracting finite sets.
Now suppose $\Aaa = \sett{A_\al}{\al < \kappa}$ is such that $\Bee = \sett{A \setminus F_\al}{\al < \kappa}$ is a disjoint family, where $F_\al \subs A_\al$ is finite for each $\al < \kappa$.
By induction, we can decrease each $F_\al$ so that the family remains disjoint. By this way, we shall get $\bigcup \Bee = \bigcup \Aaa$, that is, the spaces $K_\Aaa$ and $K_\Bee$ have the same sets of isolated points.
There is a unique continuous map $\map i {K_\Aaa}{K_\Bee}$ which is identity on the set of isolated points.
It is clear that $i$ is a homeomorphism.

(2) This is the Stone duality.

(3) For $A, A'\in\Aaa$ we say that $A\sim A'$ if and only if there are $A=A_0, ..., A_n=A'$
such that $A_i\cap A_{i+1}\not=\emptyset$ for each $0\leq i<n$. The point countability of
$\Aaa$ implies that the equivalence classes of $\sim$ are countable and have disjoint unions.
As countable families are disjointifiable, we conclude that $\Aaa$ is disjointifiable.

(4) We may assume that the families $\Aaa_n$ do not have elements in common.
Each of $\Aaa_n$'s is equivalent to a pairwise disjoint family $\Bee_n$, so
$\bigcup_{n\in\Nat} \Bee_n$ is point countable, and so by (3) it is equivalent
to a pairwise disjoint family. It is clear that $\bigcup_{n\in\Nat}\Bee_n$ is equivalent to $\Aaa_n$
by (1).

\end{pf}

\begin{prop}\label{eberleindisjoint} $K_\Aaa$ is an Eberlein compact if and only if
$\Aaa$ is disjointifiable.
\end{prop}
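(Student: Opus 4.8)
The plan is to prove both implications, relying heavily on Lemma \ref{lemmadisjoint}. For the easier direction, suppose $\Aaa$ is disjointifiable. By Lemma \ref{lemmadisjoint}(1), after subtracting finite sets we may assume $\Aaa$ is literally a pairwise disjoint family (this does not change the homeomorphism type of $K_\Aaa$ by Lemma \ref{lemmadisjoint}(2)). Now I would exhibit an explicit embedding of $K_\Aaa$ into a $\Sigma$-product, or better, directly into the standard Eberlein model $c_0(\Gamma)$ with the weak topology, where $\Gamma = X \cup \Aaa$ (here $X = \bigcup\Aaa$). The point $x \in X$ maps to the unit vector $e_x$; the point $\el_A$ maps to $e_A$ (the unit vector at the coordinate indexed by $A$ itself); and $\infty$ maps to $0$. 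One checks that a basic neighborhood of $\el_A$ of the form $\{\el_A\}\cup(A\setminus F)$ corresponds, under this map, to points that agree with $e_A$ off finitely many coordinates and are small everywhere --- this is exactly a weak neighborhood because the map sends convergent sequences (the $A$-sequence converging to $\el_A$, and the ``diagonal'' sequences converging to $\infty$) to weakly convergent sequences in $c_0(\Gamma)$. Since $\Aaa$ is pairwise disjoint, each point of $X$ lies in at most one $A$, so supports of distinct images are controlled and the map is a well-defined homeomorphic embedding onto a compact subset of $(c_0(\Gamma), \text{weak})$; hence $K_\Aaa$ is Eberlein.

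For the converse, suppose $K_\Aaa$ is Eberlein but $\Aaa$ is not disjointifiable. By Lemma \ref{lemmadisjoint}(3), non-disjointifiability implies $\Aaa$ is not point-countable, so there is a point $p \in X$ lying in uncountably many members of $\Aaa$, say $p \in A_\al$ for $\al < \omega_1$ with the $A_\al$ distinct. I would then derive a contradiction with a known characterization of Eberlein compacta --- namely that Eberlein compacta are Corson (their points have countable ``support'' in any $\Sigma$-product representation) and, more to the point, that a compact space is Eberlein iff it has a $\sigma$-point-finite $T_0$-separating family of open $F_\sigma$ sets (Rosenthal's characterization). The natural separating family of clopen sets on $K_\Aaa$ consists of the singletons $\{x\}$ for $x \in X$ together with the sets $U_A = \{\el_A\}\cup A$ for $A \in \Aaa$; the family $\{U_A : A \in \Aaa\}$ is the essential part, and the multiplicity of any point $x \in X$ inside this family is exactly $|\{A \in \Aaa : x \in A\}|$. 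If this is uncountable at $p$, then no $\sigma$-point-finite refinement can still $T_0$-separate the points $\el_{A_\al}$ from each other (any clopen set separating $\el_{A_\al}$ from $\el_{A_\beta}$ must, up to a finite set, be one of the $U_{A}$'s or a finite modification, and these all contain $p$), contradicting Eberlein-ness.

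The step I expect to be the main obstacle is the converse direction: making rigorous the claim that any $T_0$-separating family witnessing that $K_\Aaa$ is Eberlein must be ``essentially'' built from the canonical clopen sets $U_A$, so that point-countability at $p$ is forced. The clean way around this is probably not to argue about arbitrary separating families at all, but instead to use the monolithicity/Corson route combined with a counting argument: if $K_\Aaa$ is Eberlein it is monolithic, and one shows that a non-point-countable $\Aaa$ produces inside $K_\Aaa$ a separable (indeed the countable set $\{p\}$ together with a suitable countable approximation) whose closure is uncountable --- or, even more directly, that the uncountably many $\el_{A_\al}$ form a discrete subspace that cannot be captured in a Corson-compact framework with $p$ in uncountably many basic clopen sets. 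Alternatively, and perhaps most efficiently, I would invoke Theorem \ref{eberleinlist}'s equivalence of Eberlein with $C(K_\Aaa) \sim c_0(\kappa)$ only \emph{after} this proposition, and instead here use the classical fact (Amir--Lindenstrauss / Rosenthal) that $K$ Eberlein implies $C(K)$ is WCG, hence weakly Lindelöf determined, hence $K$ is Corson; then a point of uncountable multiplicity in $\Aaa$ directly violates the countable-support condition defining Corson compacta. I would present whichever of these is shortest once the details are checked; the Corson-compact obstruction at the point $p$ of uncountable multiplicity is the heart of the matter.
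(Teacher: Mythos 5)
There are genuine problems in both directions. In the forward direction your explicit map into $(c_0(\Gamma),\text{weak})$ is not continuous: as $x$ runs through the infinite set $A$, the vectors $e_x$ converge weakly to $0$, not to $e_A$, so the image of the convergent sequence $A\cup\sn{\el_A}$ does not converge to the image of $\el_A$. The embedding can be repaired by sending $x$ to $e_x+e_{A(x)}$, where $A(x)$ is the unique member of the (literally disjoint, after correction) family containing $x$; the image is then a compact set of characteristic functions of sets of size at most $2$. The paper avoids coordinates entirely and instead verifies Rosenthal's criterion directly: the families $\Bee_n=\setof{B\setminus\phi_B(n)}{B\in\Bee}$ obtained by deleting the $n$-th finite subset from each member of the disjointified family, together with the singletons of $\bigcup\Bee$, form a $\sigma$-point-finite separating family of clopen sets.

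The converse is where the proposal really fails. From non-disjointifiability you correctly extract (via the contrapositive of Lemma \ref{lemmadisjoint}(3)) a point $p$ lying in uncountably many members of $\Aaa$, but such a point is \emph{not} an obstruction to being Eberlein: take a pairwise disjoint family $\setof{A_\al}{\al<\kappa}$ and replace each $A_\al$ by $A_\al\cup\sn{p}$; the result is disjointifiable (delete $p$ from each member), so $K_\Aaa$ is Eberlein, yet $p$ has uncountable multiplicity. For the same reason your claim that any clopen set separating $\el_{A_\al}$ from $\el_{A_\beta}$ must contain $p$ is false (one may remove the single point $p$), and the monolithicity/Corson-support route cannot work either, since non-disjointifiable families can be locally small and hence monolithic (ladder systems on stationary sets). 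Since the Eberlein property of $K_\Aaa$ is invariant under finite modification of the members of $\Aaa$, any obstruction must be formulated modulo finite sets, and a single point of high multiplicity is always removable. The paper's argument does engage with this: in a $\sigma$-point-finite separating family $\bigcup_n\mathcal F_n$ of clopen sets, only countably many members contain $\infty$ and each of those misses only finitely many $\el_A$; a member not containing $\infty$ but containing some $\el_A$ must contain a cofinite tail $A\setminus F_A$ and can contain only finitely many $\el_{A'}$, so point-finiteness of $\mathcal F_n$ forces the corresponding family of tails to be point-finite; thus $\Aaa$ splits into countably many families that become point-countable after finite modification, and Lemma \ref{lemmadisjoint}(3)--(4) finishes. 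Some version of this ``tails'' argument is needed; neither route you sketch can be completed as stated.
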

\begin{proof}
We will use Rosenthal's characterization of Eberlein compacta which says that
$K$ is an Eberlein compactum if and only if it has a point separating $\sigma$-point
finite family of open $F_\sigma$ sets (see \cite{Ros}, \cite[Thm. 19.30]{KKLP} or \cite[Thm. 12.13]{fabianetal}). 
It is clear that in the case of a totally disconnected space we
may assume that the $F_\sigma$ sets are clopen.

If $\Aaa$ is disjointifiable, then  $K_\Aaa$ is homeomorphic to $K_\Bee$
for $\Bee$ pairwise disjoint. Let $\phi_B:\Nat\rightarrow [B]^{<\omega}$ be an enumeration
of all finite subsets of $B$ for $B\in\Bee$. If we define $\Bee_n=\{B\setminus\phi(n): B\in\Bee\}$
we obtain another pairwise disjoint family. These families together with the family of
all one element subsets of $\bigcup\Bee$ form the required point separating $\sigma$-point
finite family of clopen  sets.

Now suppose that $\mathcal F_n$ are point separating $\sigma$-point
finite family of clopen sets of $K_\Aaa$. Only countably many of these sets may contain
$\infty$ and those sets miss only finitely many points of the form $l_A$ for $A\in\Aaa$.
So by \ref{lemmadisjoint}(3) it is enough to use \ref{lemmadisjoint}(4) to conclude that $\Aaa$ is disjointifiable.
\end{proof}

Assume $\Aaa$ is a disjoint family of countably infinite sets and $|\Aaa| = \kappa \goe \omega$.
Then $K_\Aaa$ is naturally homeomorphic to
$$\aue \kappa := \sn \emptyset \cup \dpower \kappa 1 \cup \setof{\dn \al {\kappa + n}}{\al \in \kappa, \ntr},$$
endowed with the Cantor cube topology inherited from $\wp(\kappa) \iso 2^\kappa$.
Thus, given an infinite cardinal $\kappa$, there exists a unique topological type of a compact space $K_\Aaa$ coming from an almost disjoint family of countable sets such that $|\Aaa| = \kappa$.

\begin{prop}\label{eberleinc0} If $\Aaa$ is an almost disjoint family of countable sets such that $K_\Aaa$ is Eberlein compact, then
$C(K_\Aaa)\sim c_0(\kappa)$ for $\kappa=|\Aaa|+\omega$, in particular
$C(K_\Aaa)$ has an unconditional basis.
\end{prop}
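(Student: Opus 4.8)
The plan is to reduce to the disjoint case and then exhibit an explicit isomorphism with $c_0(\kappa)$. By Proposition~\ref{eberleindisjoint} the hypothesis that $K_\Aaa$ is Eberlein compact gives that $\Aaa$ is disjointifiable, so by Lemma~\ref{lemmadisjoint}(1),(2) we may replace $\Aaa$ by an equivalent pairwise disjoint family; this does not change the isometry type of $C(K_\Aaa)$. Hence we may assume $\Aaa$ is pairwise disjoint, and by the remark following Proposition~\ref{eberleindisjoint} we may even assume $K_\Aaa = \aue\kappa$ for $\kappa = |\Aaa| + \omega$. (If $\Aaa$ is finite or empty the space $K_\Aaa$ is metrizable scattered, so $C(K_\Aaa)$ is separable and isomorphic to a subspace of $c_0$, which is $c_0$ itself up to isomorphism when infinite-dimensional; these degenerate cases are easily absorbed into the statement with the ``$+\omega$''.)

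Next I would write down the natural candidate for an unconditional basis. Enumerate $\Aaa = \setof{A_\al}{\al < \kappa}$ with the $A_\al$ pairwise disjoint, $X = \bigcup\Aaa$, and for each $\al$ enumerate $A_\al = \setof{x_\al^n}{\ntr}$. The relevant functions in $C(K_\Aaa)$ are: the constant function $\mathbf 1$; for each isolated point $x \in X$ the characteristic function $\chi_{\sn x}$; and for each $\al < \kappa$ the function $g_\al = \chi_{\sn{\el_{A_\al}} \cup A_\al}$, which is continuous because $\sn{\el_{A_\al}} \cup A_\al$ is clopen. The claim is that, after a small modification, $\setof{g_\al}{\al<\kappa} \cup \setof{\chi_{\sn x}}{x \in X}$ together with $\mathbf 1$ forms an unconditional basis equivalent to the unit vector basis of $c_0(\kappa \oplus X) \oplus \Err = c_0(\kappa)$ (here $|X| = \kappa$ since each $A_\al$ is countable and $\kappa \goe \omega$). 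The key computation is that for any finitely supported choice of coefficients, a function of the form $\sum_\al a_\al g_\al + \sum_x b_x \chi_{\sn x} + c\,\mathbf 1$ has supremum norm controlled above and below by $\max(\sup_\al |a_\al|, \sup_x|b_x'|, |c|)$ up to a universal constant, where $b_x'$ is a corrected coefficient: at the point $\el_{A_\al}$ the value is $c + a_\al$, at $x_\al^n$ it is $c + a_\al + b_{x_\al^n}$, and at $\infty$ it is $c$. Evaluating at these three families of points shows the span is all of $C(K_\Aaa)$ (every $f \in C(K_\Aaa)$ is determined by its values on $X \cup \setof{\el_A}{A\in\Aaa}$ and these converge appropriately, so $f$ is a $c_0$-type combination of the basis vectors), and the triangle inequality in both directions gives the norm equivalence with an absolute constant.

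The cleanest way to organize the norm estimate is probably to pass to the Boolean-algebraic picture: $C(K_\Aaa)$ is the closed linear span of characteristic functions of clopen subsets of $K_\Aaa$, and the clopen algebra is generated by the $\sn x$ for $x \in X$ and the $\sn{\el_A}\cup A$ for $A \in \Aaa$. Since $\Aaa$ is disjoint, this Boolean algebra is a direct sum (over $\al$) of countable ``fan'' algebras plus the finite subsets of $X$, and $c_0(\kappa)$-decompositions behave well under $c_0$-sums. So one reduces to the single-fan case $\Aaa = \sn{A}$ with $A$ countable: there $K_{\sn A}$ is a convergent sequence with its limit together with an isolated extra point $\infty$, hence metrizable, and $C(K_{\sn A}) \sim c_0$ directly. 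Then $C(K_\Aaa)$ is a ``$c_0$-sum with a common constant coordinate'' of these pieces, which is still isomorphic to $c_0(\kappa)$. I expect the main obstacle to be purely bookkeeping: making the modification of the basis vectors precise so that the change-of-basis map to the literal unit vector basis of $c_0(\kappa)$ is bounded with bounded inverse, and handling the ``glue'' given by the point $\infty$ and the constant function uniformly across all $\kappa$ coordinates. Once the disjoint reduction is in place, no genuinely hard analysis remains, and the ``in particular'' clause about the unconditional basis is immediate since the unit vector basis of $c_0(\kappa)$ is $1$-unconditional.
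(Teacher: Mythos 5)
Your proposal is correct and takes essentially the same route as the paper: both reduce to a pairwise disjoint family via Proposition~\ref{eberleindisjoint} and Lemma~\ref{lemmadisjoint}, and then coordinatize $C(K_\Aaa)$ by the quantities $f(\infty)$, $f(\el_A)-f(\infty)$ and $f(x)-f(\el_{A})$ for $x\in A$. The paper packages this as a single bounded projection onto $\{f: f(\el_A)=0=f(\infty)\ \text{for all}\ A\}\equiv c_0(\bigcup\Aaa)$ with complement isomorphic to $c_0(|\Aaa|)\oplus\Err$, whereas you write out the resulting unconditional basis explicitly; the underlying computation is identical.
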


\begin{proof} We may assume that $\Aaa$ is pairwise disjoint. Consider 
$$X=\{f\in C(K_\Aaa): f(l_A)=0=f(\infty)\  \hbox{for each}\ A\in\Aaa\}.$$
Define $P:C(K_\Aaa)\rightarrow X$ by $P(f)(x)=f(x)-f(l_A)$ where
$x\in \bigcup\Aaa$ and $A$ is the unique element of $\Aaa$ such that $x\in A$.
It is easy to check that $P$ is a projection onto $X$. 
But $X\equiv c_0(\bigcup\Aaa)$ and $C(K_\Aaa/X)\sim c_0(|\Aaa|)\oplus\Err$
which completes the proof of the proposition. 
\end{proof}

\begin{lm}\label{nocopyw1}
Suppose that $\Aaa$ is an almost disjoint family of countable sets, then
$[0,\omega_1]$ is not  a subspace of the dual ball $B_{C(K_\Aaa)}$ with the weak$^*$
topology.
\end{lm}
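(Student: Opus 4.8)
The plan is to argue by contradiction: suppose $[0,\omega_1]$ embeds homeomorphically into $B_{C(K_\Aaa)^*}$ with the weak$^*$ topology. The key structural fact to exploit is that $K_\Aaa$ is scattered of height $3$, so $C(K_\Aaa)$ is an Asplund space (or, more concretely, the measures in $B_{C(K_\Aaa)^*}$ are all atomic, supported on the scattered set $K_\Aaa$). Thus each point $\mu \in B_{C(K_\Aaa)^*}$ can be written as $\mu = \sum_{x} \mu(\{x\})\delta_x$ with $\sum_x |\mu(\{x\})| \le 1$, where $x$ ranges over the (countable, since $K_\Aaa\setminus\{\infty\}$ is first countable and any atomic measure has countable support anyway) support of $\mu$.

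First I would consider the copy of $[0,\omega_1]$ inside $B_{C(K_\Aaa)^*}$ and look at the point $\mu_{\omega_1}$ corresponding to the top element $\omega_1$, which is the weak$^*$-limit of the net $(\mu_\al)_{\al<\omega_1}$. For each $\al<\omega_1$ the measure $\mu_\al$ has countable support $S_\al \subs K_\Aaa$; set $S = \bigcup_{\al<\omega_1} S_\al \cup \supp(\mu_{\omega_1})$. The natural move is to find a countable "elementary" initial segment: since each $\mu_\al$ lives on a countable set, a standard closing-off / Pressing Down argument along $\omega_1$ produces an ordinal $\delta<\omega_1$ and a countable set $S^* \subs K_\Aaa$ such that $\supp(\mu_\al)\subs S^*$ for cofinally (hence stationarily) many $\al<\delta$, while $\mu_\delta$ also lives on $S^*$. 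More carefully: for each limit $\al$, $\mu_\al = \mathrm{w}^*\text{-}\lim_{\beta<\al}\mu_\beta$, so $\supp(\mu_\al)$ is essentially determined by $\bigcup_{\beta<\al}\supp(\mu_\beta)$ together with whatever mass escapes to $\infty$. The point is that the support function is "continuous enough" that on a club of countable ordinals $\al$ we have $\supp(\mu_\al)\subs S_\al$ where $S_\al = \bigcup_{\beta<\al}\supp(\mu_\beta)$ is already countable.

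The heart of the contradiction is then a \emph{separability} argument: restrict attention to the countable set $S^* = \bigcup_{\al<\delta}\supp(\mu_\al)$ (for $\delta$ chosen in the relevant club) together with the limit behaviour at $\infty$. On the one hand, $\mu_\delta$ should be the weak$^*$-limit of $(\mu_\al)_{\al<\delta}$, and all these measures, including $\mu_\delta$, are concentrated on the countable compact set $\Cl{S^*}\cup\{\infty\}\subs K_\Aaa$ — but here the crucial observation is that closures in $K_\Aaa$ of countable sets are countable only when things are well-behaved near the $l_A$'s; in general $\Cl{S^*}$ adds the points $\infty$ and those $l_A$ with $A\cap S^*$ infinite. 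Regardless, the sub-$\sigma$-algebra generated by $S^*$ makes the restriction map $B_{C(K_\Aaa)^*}\to \ell_1(\Cl{S^*}\cup\{\infty\})$ weak$^*$-to-weak$^*$ continuous, and on a metrizable (because countably determined) weak$^*$-compact set the uncountable well-ordered chain $(\mu_\al)_{\al\le\delta}$ cannot be homeomorphic to $[0,\delta+1]$ unless it is eventually constant — contradicting injectivity of the embedding. Equivalently: a convergent sequence together with its limit is metrizable, so it cannot contain $[0,\omega_1]$, and we have arranged an honest $\omega_1$-chain of distinct points converging in a metrizable weak$^*$-compact subset.

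\textbf{Main obstacle.} The delicate point I expect to be the real work is justifying that the weak$^*$-limit $\mu_\al$ at a countable limit ordinal $\al$ is supported (up to the single point $\infty$) on $\bigcup_{\beta<\al}\supp(\mu_\beta)$ — i.e. that mass cannot "teleport" to new isolated points or new $l_A$'s in the limit. This requires understanding how weak$^*$-convergence of atomic measures interacts with the peculiar topology of $K_\Aaa$ (isolated $x\in X$, the $l_A$'s with neighbourhoods $\{l_A\}\cup(A\setminus F)$, and $\infty$). The clean way to handle it is: for an isolated point $x$, the functional $\mu\mapsto\mu(\{x\})$ is weak$^*$-continuous (it is evaluation against $\chi_{\{x\}}\in C(K_\Aaa)$), so $\mu_\al(\{x\}) = \lim_\beta \mu_\beta(\{x\})$ can only be nonzero if some $\mu_\beta(\{x\})\ne0$; for $l_A$, the functional $\mu\mapsto\mu(\{l_A\})$ is likewise weak$^*$-continuous ($\chi_{\{l_A\}}$ is continuous on $K_\Aaa$), handled the same way; and what is left — the mass that could appear "at $\infty$" — is a single extra point, harmless for metrizability. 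Once this localization is in place, Lemma~\ref{nocopyw1} follows from the elementary fact that $[0,\omega_1]$ is not metrizable and contains no nontrivial convergent sequences with uncountably many terms before the limit, whereas the restriction construction forces exactly such a configuration inside a metrizable compact set.
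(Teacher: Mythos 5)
There is a genuine gap, and it sits exactly at what you call the heart of the contradiction. Your closing-off argument produces a \emph{countable} ordinal $\delta$ and a countable set $S^*$ carrying the measures $\mu_\al$ for $\al\le\delta$; but the chain $(\mu_\al)_{\al\le\delta}$ is then a copy of the countable compact space $[0,\delta]$, which embeds into metrizable compacta without any difficulty, so no contradiction arises. To get a contradiction from metrizability you would need the \emph{entire} $\omega_1$-chain to sit injectively inside a metrizable weak$^*$-compact set, and your restriction map cannot deliver that: the union $\bigcup_{\al<\omega_1}\supp(\mu_\al)$ may have cardinality $\omega_1$, and restricting to a fixed countable $\Cl{S^*}\cup\{\infty\}$ is in general far from injective on the chain, since the mass of $\mu_\al$ for $\al>\delta$ can live entirely outside $S^*$. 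This is the essential difficulty of the lemma: every proper initial segment of $[0,\omega_1]$ is metrizable, so nothing can be extracted from countable localization alone. There are also two local errors: $\chi_{\{\el_A\}}$ is \emph{not} continuous on $K_\Aaa$ (the point $\el_A$ is not isolated; e.g.\ $\delta_{x_n}\to\delta_{\el_A}$ weak$^*$ when $(x_n)$ enumerates $A$, while $\delta_{x_n}(\{\el_A\})=0\not\to 1$), so the functional $\mu\mapsto\mu(\{\el_A\})$ is not weak$^*$-continuous and your localization of supports at limit stages fails as stated; moreover the weak$^*$-to-weak$^*$ continuity of the restriction $B_{C(K_\Aaa)^*}\to\ell_1(\Cl{S^*}\cup\{\infty\})$ would require a bounded extension operator $C(\Cl{S^*})\to C(K_\Aaa)$, which you do not construct.

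The paper's proof supplies precisely the quantitative ingredient your plan lacks. It first uses paracompactness of the norm topology together with Fodor's Lemma to replace the copy of $[0,\omega_1)$ by a stationary subset that is $\varepsilon$-separated in the dual norm for some fixed $\varepsilon>0$; since stationary sets contain closed subsets of every countable order type, one obtains, for every $\al<\omega_1$, an $\varepsilon$-separated weak$^*$ copy of $[0,\al]$ in the dual ball. Only then does a countable localization enter: each such copy is countable, so it restricts injectively and norm-preservingly to the dual ball of a subspace $X(\Bee)$ isomorphic to $c_0$, whose Szlenk index is $\omega$; having uniformly $\varepsilon$-separated copies of all countable ordinal intervals there forces the Szlenk index to be uncountable, a contradiction. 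The uniform norm separation is what turns ``copies of countable ordinals in a metrizable dual ball'' (harmless) into an actual obstruction, and it is this step that is missing from your proposal.
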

\begin{proof} Suppose that $\phi: [0,\omega_1]\rightarrow B_{C(K_\Aaa)}$ is a homeomorphic embedding.
 As
convergence in the dual norm implies the weak$^*$ convergence, we may cover
the copy of $[0,\omega_1)$ by norm-open sets $U_\alpha$ such that 
$$\phi(\alpha)\in
\phi[[0,\omega_1)]
\cap U_\alpha\subseteq \phi[[0,\alpha]].$$
Let $\mathcal V$ be a locally point-finite refinement of $U_\alpha$s
which exists as metrizable spaces are paracompact (\cite{engelking}).
Whenever there is $\beta<\alpha$ and $V\ni \phi(\alpha)$ such that $\phi(\beta)\in V\in \mathcal V$
let $f(\alpha)$ be one of them. If the set of such $\alpha$s were stationary, by the
Pressing Down Lemma, $f$ would be constant on an uncountable set which would contradict
the point-finiteness of $\mathcal V$. Hence, for a club set of $\alpha$s there is no such $\beta$
and so such $\phi(\alpha)$ can be separated by a norm open set from
$\phi(\beta)$s for $\beta<\alpha$. $\phi(\alpha)$ can also be separated by
a norm open set from the $\phi(\beta)$s for $\beta>\alpha$. As club subsets of $\omega_1$
are homeomorphic to $\omega_1$, we may assume that $\phi([0,\omega_1))$ is norm discrete.
By the fact
that the ideal of non-stationary subsets of $\omega_1$ is $\sigma$-complete (see \cite{jech}), we can conclude that
there is a stationary set $S\subseteq \omega_1$ such that there is $\delta>0$ such that
for every distinct $\alpha, \beta\in S$ we have $||\phi(\alpha)-\phi(\beta)||>\delta$, where the norm is
the dual norm in $C(K_\Aaa)^*$. It is well known that stationary sets contain closed subsets
of arbitrarily big countable order type. This implies that for every countable
ordinal $\alpha$ the ball $B_{C(K_\Aaa)}$ with the weak$^*$ topology
 contains homeomorphic copies $K_\alpha$ of ordinal intervals $[0,\alpha]$ such that the distance between
any two points is not smaller than $\delta$. 
Given $\alpha<\omega_1$, as $[0,\alpha]$ is countable and supports
of the measures corresponding to the elements of the dual space are countable,
there is a countable $\Bee\subseteq \Aaa$
such that  taking the restriction of elements of $K_\alpha$ to 
$$X(\Bee)=\{1_{x}: x\in \bigcup\Bee\}\cup\{1_{A\cup\{l_A\}}: A\in \Bee\},$$
gives an injective weak$^*$ continuous function, and so a homeomorphism onto its image. Moreover, we may assume that the norms
of elements of $K_\alpha$ do not change when we take the restrictions.
This leads to a contradiction as $X(\Bee)$ is isomorphic to $c_0$ by Proposition~\ref{eberleinc0}
and so the Szlenk index of it is $\omega$ (\cite[Prop. 2.27]{rosenthal}).
By~\cite[Cor. 2.20]{rosenthal}, the Szlenk index must be bigger than the Cantor-Bendixson 
height of all countable ordinal intervals, once we have the above $\delta$-separated copies of them in the dual ball of $X(\Bee)$.
This completes the proof of the lemma.
\end{proof}

\begin{prop}\label{eberleinandskeletons} If 
$C(K_\Aaa)$ has  a projectional skeleton, then $K_\Aaa$ is an Eberlein compact.
\end{prop}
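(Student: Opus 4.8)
The plan is to reduce, via Proposition~\ref{eberleindisjoint}, to showing that $\Aaa$ is disjointifiable, and then to argue by contradiction. So suppose $C(K_\Aaa)$ has a projectional skeleton while $\Aaa$ is not disjointifiable. First I would reduce to a \emph{connected} family: consider the graph on $\Aaa$ joining $A$ and $B$ when $A\cap B\nnempty$. If every connected component of this graph were disjointifiable, then so would be $\Aaa$ (the components have pairwise disjoint unions, so one disjointifies each of them separately); hence some component $\Cee\subs\Aaa$ is not disjointifiable, and being not disjointifiable it is uncountable (countable families are disjointifiable, as in the proof of \ref{lemmadisjoint}). Moreover $C(K_\Cee)$ is isomorphic to a complemented subspace of $C(K_\Aaa)$: since $K_\Aaa\setminus\sn\infty$ is the free topological sum of the clopen pieces $K_{\Cee'}\setminus\sn\infty$ over the components $\Cee'$, one has $C(K_\Aaa)=\Err\cdot 1\oplus\big(c_0\text{-}\bigoplus_{\Cee'}C_0(K_{\Cee'}\setminus\sn\infty)\big)$, in which the summand $C(K_\Cee)=\Err\cdot 1\oplus C_0(K_\Cee\setminus\sn\infty)$ is complemented. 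As projectional skeletons pass to complemented subspaces \cite{Kub_skel}, $C(K_\Cee)$ has a projectional skeleton; replacing $\Aaa$ by $\Cee$, I may assume $\Aaa$ is connected, uncountable, and not disjointifiable.

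Second, I would extract a combinatorial witness of non-disjointifiability of size $\omega_1$. By \ref{lemmadisjoint}(1), for \emph{every} assignment of finite sets $F_A\subs A$ the family $\setof{A\setminus F_A}{A\in\Aaa}$ fails to be pairwise disjoint. Using connectedness and uncountability --- a spanning-tree/distance analysis of the graph produces a single $A\in\Aaa$ meeting uncountably many members, hence, as $A$ is countable, a point lying in uncountably many members --- together with repeated applications of Fodor's Pressing Down Lemma to stabilise the finitely many relevant regressive parameters, I would obtain an uncountable subfamily $\setof{A_\xi}{\xi<\omega_1}\subs\Aaa$, together with points $x_\xi$, exhibiting a ``ladder-like'' pattern of intersections which no choice of finite subtractions can disentangle. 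For $A\in\Aaa$ write $g_A\in C(K_\Aaa)$ for the characteristic function of the clopen set $\sn{\el_A}\cup A$.

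Third, I would feed this into a skeleton $\Ef$ of $C(K_\Aaa)$ (which, by \cite{Kub_skel}, may be taken uniformly bounded). Using that $\Ef$ is up-directed, covers $C(K_\Aaa)$, and is closed under closures of countable increasing unions, and using the coherence relations $P_E\cmp P_F=P_F\cmp P_E=P_E$, I would recursively build a continuous increasing chain $\seqof{P_\xi}{\xi<\omega_1}$ of skeleton projections with $g_{A_\eta}\in\im P_\xi$ for all $\eta<\xi$, with $\sup_\xi\norm{P_\xi}<\infty$, and with $P_\lambda$ the pointwise limit of the $P_\xi$ ($\xi<\lambda$) on $\im P_\lambda$ at limits $\lambda$. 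Passing to adjoints on $C(K_\Aaa)^*\equiv\ell^1(K_\Aaa)$ (purely atomic, since $K_\Aaa$ is scattered) gives a coherent increasing chain of bounded projections there, and I would then read off, from the $P_\xi^*$ applied to suitable normalised combinations of the point masses $\delta_{\el_{A_\eta}}$, a transfinite sequence of functionals in $B_{C(K_\Aaa)^*}$ forming a homeomorphic copy of the ordinal interval $[0,\omega_1]$ in the weak$^*$ topology: the ladder-like intersection pattern of $\setof{A_\xi}{\xi<\omega_1}$ should force a uniform positive gap between distinct levels (this is precisely where non-disjointifiability is used --- for a disjointifiable family the construction degenerates, consistently with \ref{eberleinlist}), while continuity of the chain at limits of cofinality $\omega$ yields weak$^*$-convergence to the level indexed by that limit, and the $\infty$-level closes the copy. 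This contradicts Lemma~\ref{nocopyw1}, finishing the argument.

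I expect the main obstacle to be exactly the construction in the last paragraph: coordinating, within a single transfinite recursion, the growth of the skeleton projections along a cofinal chain of members of $\Ef$, the coherence relations forcing continuity at limit stages, and the ladder-like intersection pattern keeping the resulting dual functionals $\delta$-separated --- so that a genuinely non-metrisable, $[0,\omega_1]$-like object is forced into $B_{C(K_\Aaa)^*}$. Everything preceding it --- the reduction to disjointifiability via \ref{eberleindisjoint}, the reduction to a connected uncountable non-disjointifiable family, and the Fodor-based extraction of the ladder-like witness --- is bookkeeping resting on \ref{lemmadisjoint} and \ref{eberleindisjoint}.
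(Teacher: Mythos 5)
Your overall target --- forcing a copy of $[0,\omega_1]$ into the dual ball and contradicting Lemma~\ref{nocopyw1} --- is exactly where the paper's proof also lands, and your preliminary reductions (to a connected, uncountable, non-disjointifiable subfamily whose $C(K_\Cee)$ is complemented and hence inherits the skeleton) are sound. But the two steps that carry all the weight are not carried out, and each hides a genuine gap. On the combinatorial side: from connectedness and uncountability you correctly get a point lying in uncountably many members of $\Aaa$, but that is far from a witness of non-disjointifiability --- a pairwise disjoint family with a single common point adjoined to every member has exactly this feature and is trivially disjointifiable. To extract a genuinely ``ladder-like'' uncountable pattern that no finite subtractions disentangle you would need something like Theorem~\ref{structural}, which the paper proves only for \emph{locally small} families of cardinality $\omega_1$; an arbitrary non-disjointifiable family (e.g.\ an uncountable almost disjoint family of subsets of $\omega$) need not contain any such ladder structure, and your Fodor-based stabilisation is never specified.

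More seriously, the passage from a skeleton plus such a witness to a $\delta$-separated homeomorphic copy of $[0,\omega_1]$ in $B_{C(K_\Aaa)^*}$ is precisely the content of the result the paper imports as a black box: after renorming so that the skeleton becomes a $1$-projectional skeleton (\cite[Cor.~16]{Kub_skel}), the dual ball lies in the class $\mathcal R$ of \cite{retractions}, and \cite[Thm.~4.3]{retractions} gives the dichotomy that such a compact space is either Corson or contains $[0,\omega_1]$. The paper excludes the second alternative by Lemma~\ref{nocopyw1}, concludes that $K_\Aaa$ (sitting inside a multiple of the dual ball) is Corson, and finishes with Alster's theorem (scattered Corson compacta are Eberlein) together with Proposition~\ref{eberleindisjoint} --- a direct argument in which non-disjointifiability never enters. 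Your sketch instead asks the reader to rebuild that dichotomy by hand (coherent transfinite chains of adjoint projections, continuity at limits, uniform separation of levels), which is a substantial theorem rather than bookkeeping; you acknowledge this is ``the main obstacle,'' and as written the proposal does not overcome it.
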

\begin{proof}
Assume $C(K_\Aaa)$ has a projectional skeleton $\Pee$.
There is a renorming of $C(K_\Aaa)$ for which $\Pee$ becomes a $1$-projectional skeleton (see \cite[Cor. 16]{Kub_skel}).
Let $G$ be the dual unit ball with respect to this renorming.
Then $G$ belongs to the class $\mathcal R$ introduced in \cite{retractions}.
By \cite[Thm. 4.3]{retractions} a member of class $\mathcal R$ is either a Corson compact or
contains a copy of $[0,\omega_1]$ which is excluded by Lemma~\ref{nocopyw1}, because $G$ is contained in a multiple of the standard dual ball.
Since $K_\Aaa$ is contained in a multiple of $G$, it follows that $K_\Aaa$ is Corson.
But $K_\Aaa$ is scattered, and hence
Eberlein by Alster's theorem \cite{alster} which says that a scattered Corson compact space is Eberlein compact.
\end{proof}

\begin{proof}[Proof of Theorem~\ref{scpimpliescscp}]
First, recall Sobczyk's Theorem~\cite{Sobczyk}: Every isomorphic copy of $c_0$ is complemented in every separable Banach space.
It follows that, if $C(K_\Aaa)$ has SCP, then
every isomorphic copy of $c_0$ in $C(K_\Aaa)$ is complemented. 
Consider the subspaces $X(\Bee)$ which are closures of linear spaces generated by
constant functions and by
$$\{1_{x}: x\in \bigcup\Bee\}\cup\{1_{A\cup\{l_A\}}: A\in \Bee\},$$
where $\Bee$ is a countable subset of $\Aaa$. 
It is enough to note that 
$$\{X(\Bee): \Bee\ \hbox{is a countable subset of}\ \Aaa\}$$
forms a skeleton of separable complemented subspaces. They are complemented
being isomorphic to spaces $C(K_\Bee)$ respectively, and so isomorphic to
$c_0$ by Proposition~\ref{eberleinc0} since $K_\Bee$ is metrizable, and so Eberlein compact.
\end{proof}

\section{Weakly Lindel\"of spaces of the form $C(K_\Aaa)$}

\begin{df}\label{locsmalldef}  We say that an almost disjoint family of countable sets $\Aaa$ is
\emph{locally small} if 
$$\{A\in\Aaa: A\cap B\ \hbox{is infinite}\}$$
is at most countable for each countable $B\subseteq\bigcup\Aaa$.
\end{df}

\begin{prop}\label{equivwlindelof} $K_\Aaa$ is monolithic if and only if $\Aaa$ is locally small
if and only if $C(K_\Aaa)$ is Lindel\"of in the weak topology.
\end{prop}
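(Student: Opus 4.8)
The plan is to prove the chain of equivalences in Proposition~\ref{equivwlindelof} by establishing two biconditionals: first that $\Aaa$ locally small is equivalent to $K_\Aaa$ monolithic, and then that $K_\Aaa$ monolithic is equivalent to $C(K_\Aaa)$ being weakly Lindel\"of.

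For the first equivalence, since $K_\Aaa$ is scattered, monolithicity just means every separable (equivalently, countable-closure) subset of $K_\Aaa$ is countable. The only interesting separable subsets are those built from the isolated points $X = \bigcup\Aaa$: if $Y \subs K_\Aaa$ is countable, then $\ovr Y$ adds only limit points $\el_A$ such that $A \cap D$ is infinite for the countable set $D = Y \cap X$, plus possibly $\infty$. So $\ovr Y$ is countable for every countable $Y$ precisely when $\setof{A \in \Aaa}{A \cap D \text{ is infinite}}$ is countable for every countable $D \subs \bigcup\Aaa$ — which is exactly local smallness. I would spell out that $\infty \in \ovr Y$ only when infinitely many $\el_A$ lie in $\ovr Y$, so it does not affect countability, and note that one direction is trivial (if $\Aaa$ is not locally small, witnessed by a countable $B$, then $B \cup \setof{\el_A}{A \cap B \text{ infinite}}$ has uncountable closure, so $K_\Aaa$ is not monolithic).

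For the equivalence with weak Lindel\"ofness, the natural route is via Corson-type characterizations. One direction: if $C(K_\Aaa)$ is weakly Lindel\"of, then $K_\Aaa$ embeds into $(C(K_\Aaa), \text{weak})$ via point evaluations (a standard fact, $K_\Aaa$ is homeomorphic to the set of Dirac deltas in the weak topology, a closed subset), so $K_\Aaa$ is Lindel\"of in a weaker-than-metric setting; combined with first countability of $K_\Aaa \setminus \set\infty$ and scatteredness one concludes monolithicity — indeed a Lindel\"of subspace of a weakly Lindel\"of $C(K)$ built on a scattered compactum is countable unless it accumulates at $\infty$. More cleanly, I would use that $C(K)$ weakly Lindel\"of implies $K$ is monolithic for scattered $K$ by a direct argument: a countable set of isolated points together with the $\el_A$'s converging to it, if uncountable, would be a closed discrete uncountable subset of $(C(K_\Aaa),\text{weak})$ after passing to point evaluations, contradicting Lindel\"ofness. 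For the converse, if $K_\Aaa$ is monolithic (equivalently scattered Corson), then $C(K_\Aaa)$ is weakly Lindel\"of — here I would invoke that $K_\Aaa$, being scattered and monolithic, is in fact an Eberlein-like space on each countable piece; more directly, I would exhibit the weak Lindel\"of property using the skeleton of subspaces $X(\Bee)$ for $\Bee \subs \Aaa$ countable (as in the proof of Theorem~\ref{scpimpliescscp}), each isomorphic to $c_0$ hence weakly Lindel\"of, and an argument that local smallness makes $C(K_\Aaa) = \bigcup\setof{X(\Bee)}{\Bee \text{ countable}}$ "$\sigma$-countably directed" in a way that transfers weak Lindel\"ofness.

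The main obstacle I anticipate is the converse direction of the weak Lindel\"of equivalence: assembling weak Lindel\"ofness of $C(K_\Aaa)$ from the local data. A weakly Lindel\"of union of a directed family of weakly Lindel\"of pieces need not be weakly Lindel\"of in general, so I would need to use local smallness essentially — perhaps by showing directly that $K_\Aaa$ is a Corson compactum when $\Aaa$ is locally small (embed $K_\Aaa$ into $\Sigma(\Aaa \times \omega \cup \set\infty)$ by sending $x \in A$ to a point supported on $\setof{(A',n)}{x \in A'}$, which is finite by almost-disjointness, sending $\el_A$ to something supported on $\set A \times \omega$, a countable set, and $\infty$ to $0$), from which monolithicity is automatic; but weak Lindel\"ofness of $C(K)$ for $K$ Corson is the deep theorem (it is the statement that Corson compacta are "weakly Lindel\"of determined" — actually this requires knowing $K_\Aaa$ is a Gul'ko or at least Corson compactum and invoking the corresponding Banach-space theorem). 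I expect the authors instead give a hands-on argument tailored to $K_\Aaa$, and I would try to do the same: show any open cover of $(C(K_\Aaa), \text{weak})$ by convex sets has a countable subcover by reducing to a countable subfamily $\Bee$ via local smallness and a pressing-down / closing-off argument, then using that $X(\Bee) \sim c_0$ is weakly Lindel\"of.
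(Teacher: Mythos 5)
Your first equivalence (locally small $\Leftrightarrow$ monolithic) is fine; the paper dismisses it as immediate from the definitions and your spelled-out version is correct. The genuine gap is in the direction ``$\Aaa$ locally small $\Rightarrow$ $C(K_\Aaa)$ weakly Lindel\"of'', which is where all the content of the proposition lies. Your primary plan --- show that $K_\Aaa$ is Corson and then quote a Banach-space theorem --- cannot work: a nontrivial ladder system $\mathcal L$ on a stationary set is locally small, so $K_{\mathcal L}$ is monolithic, yet it is \emph{not} Corson; this is exactly Pol's example \cite{pollindelof} of a non-Corson compactum with Lindel\"of function space, recalled in Section 4, and it is detected by Proposition~\ref{stationaryladder} together with Alster's theorem (a scattered Corson compact is Eberlein, while $K_{\mathcal L}$ is not Eberlein for stationary $S$). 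Your proposed embedding already fails at the first step: for $x\in\bigcup\Aaa$ the set $\setof{A'\in\Aaa}{x\in A'}$ need not be finite --- almost disjointness bounds $|A\cap A'|$, not the number of members of $\Aaa$ containing a fixed point, and in a ladder system a fixed ordinal typically lies on uncountably many ladders. The fallback sketch (``reduce to a countable $\Bee$ by a pressing-down argument and use $X(\Bee)\sim c_0$'') is not an argument: weak Lindel\"ofness does not pass to increasing unions of weakly Lindel\"of subspaces, the restriction to convex open covers would itself need justification, and no mechanism is offered for extracting a countable subcover of the whole space from the countable pieces. (Also, in the converse direction your ``point evaluations'' live in the dual, not in $C(K_\Aaa)$, so the claimed closed discrete subset of $(C(K_\Aaa),\mathrm{weak})$ is not actually constructed; this direction is the easier one, but as written it is not a proof either.)

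The paper's route is entirely different and you should be aware of it: the implication ``not monolithic $\Rightarrow$ not weakly Lindel\"of'' is quoted from Lemma 2.2 of \cite{KoszZiel} (based on Theorem 2 of \cite{pollindelof}), and the hard converse is obtained by a forcing absoluteness argument --- given a cover of $C(K_\Aaa)$ with no countable subcover, collapse $|\Aaa|$ to $\omega_1$ by a countably closed forcing, observe that $K_\Aaa$, local smallness, and the absence of a countable subcover are all preserved (no new countable sets are added), and then apply Theorem 2.5 of \cite{KoszZiel}, which establishes weak Lindel\"ofness for locally small families of cardinality at most $\omega_1$. The essential combinatorial work is thus outsourced to a cited theorem plus a metamathematical reduction of the cardinality; nothing in your proposal substitutes for that step.
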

\begin{proof}
The first equivalence is immediate from the definitions.

If $K_\Aaa$ is not monolithic, then $C(K_\Aaa)$ is not Lindel\"of in the weak topology
by Lemma 2.2. of \cite{KoszZiel} which is based on Theorem 2 of \cite{pollindelof}.
The shortest way of proving the other implication is to use forcing method. Suppose $K_\Aaa$
is monolithic but $C(K_\Aaa)$ is not Lindel\"of in the weak topology. Let $\kappa=|\Aaa|$.
Let $\{V_\xi: \xi<\kappa\}$ be an open cover of $C(K_\Aaa)$ without a countable subcover.
and let $P$ be a forcing notion which collapses $\kappa$ to $\omega_1$ and is countably closed.
Note that $K_\Aaa$ is the same  in the generic extension and so $\{V_\xi: \xi<\kappa\}$ remains an open cover of $C(K_\Aaa)$ without a countable subcover because $P$ does not introduce any new countable sets.
Moreover $\Aaa$ remains locally small by the same argument and the absoluteness of intersections.
In the generic extension we can however apply  Theorem 2.5. of \cite{KoszZiel} which is based on
the results of \cite{pollindelof} whose generalization to higher cardinals seems at least tedious.
Applying the above theorem, we obtain a contradiction.
\end{proof}

The following fact follows straight from the definition:

\begin{prop}\label{pghorerfg}
Let a Banach space $E$ have the controlled SCP. Then the dual unit ball $\ubal{E^*}$ with the weak star topology is monolithic.
\end{prop}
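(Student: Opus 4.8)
The plan is to unwind the definition of the controlled SCP and test it on separable subspaces of $E$ whose duals capture an arbitrary countable piece of the weak$^*$ closure of a countable subset of $\ubal{E^*}$. Concretely, fix a countable set $D \subs \ubal{E^*}$ and let $K = \Cl{D}^{w^*}$; I want to show $K$ is second countable. First I would choose a countable set $G_0 \subs E^*$ with $D \subs G_0$ (so $G_0$ is a separable subspace of $E^*$ once we pass to the closed linear span), and apply the controlled SCP to the pair $F = \{0\}$ (or any separable subspace of $E$) and $G_0$: this yields a projection $\map P E E$ with $\im P$ separable and $G_0 \subs \im P^*$. The point of $\im P^*$ is that it is itself isometric to $(\im P)^*$, which is separable since $\im P$ is; so $G_0$ sits inside a separable dual.

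Next I would use that $P^*$ is a weak$^*$-to-weak$^*$ continuous projection on $E^*$ with separable range, and that it is the identity on $G_0$, hence the identity on the weak$^*$ closure $K$ of $D$ as well — here one needs that $P^*$ restricted to the bounded set $\ubal{E^*}$ is weak$^*$ continuous and that $D \subs \ker(\operatorname{id}-P^*)$, a weak$^*$-closed set, so $K \subs \ker(\operatorname{id}-P^*) = \im P^*$. Therefore $K$ is a weak$^*$-compact subset of $\im P^*$. Now $\im P^*$, with the weak$^*$ topology inherited from $E^*$, is the dual of the separable space $\im P$ equipped with its own weak$^*$ topology (this identification is the standard one for complemented subspaces: $P^*$ identifies $(\im P)^*$ with a weak$^*$-closed subspace of $E^*$, and the two weak$^*$ topologies agree on bounded sets). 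The dual unit ball of a separable Banach space is weak$^*$ metrizable, and more generally every weak$^*$-compact subset of the dual of a separable space is metrizable. Hence $K$ is metrizable, i.e. second countable, which is exactly monolithicity of $(\ubal{E^*}, w^*)$.

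The main obstacle — really the only delicate point — is the weak$^*$ continuity bookkeeping: one must be careful that $P^* : (E^*, w^*) \to (E^*, w^*)$ is continuous (this holds because $P^* = P^*$ is the adjoint of a bounded operator, hence weak$^*$-weak$^*$ continuous on all of $E^*$), and that the identification of $\im P^*$ with $(\im P)^*$ is compatible with the weak$^*$ topologies at least on bounded sets, so that metrizability of $\ubal{(\im P)^*}$ transfers to the relevant weak$^*$-compact sets. None of this requires new ideas; it is the routine duality theory for complemented subspaces, so after setting up the notation the proof is short. I would write it as: take a countable $D \subs \ubal{E^*}$, apply controlled SCP to get $P$ with $\overline{\operatorname{span}} D \subs \im P^*$ and $\im P$ separable, note $\im P^*$ is a separable dual in the weak$^*$ topology on bounded sets, conclude $\Cl D^{w^*}$ is weak$^*$-metrizable, done.
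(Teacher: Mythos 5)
Your proposal is correct and follows essentially the same route as the paper: apply the controlled SCP to a separable subspace of $E^*$ containing the given countable set $D$, observe that the weak$^*$ closure of $D$ lands in $\im P^*$ (weak$^*$-closed as the kernel of $\operatorname{id}-P^*$), and use that bounded subsets of the dual of the separable space $\im P$ are weak$^*$-metrizable. The paper's version is just a terser statement of the same argument, so no further comparison is needed.
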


\begin{pf}
Fix a countable set $B \subs \ubal{E^*}$ and choose a projection $\map PEE$ such that $PE$ is separable and $B \subs P^*E^*$.
Then $\cl B$ is contained in the second countable space $P^*E^*$ considered with the weak star topology.
\end{pf}

It is not known whether monolithicity of the dual unit ball (perhaps plus some extra property of the Banach space) implies the controlled SCP.
We prove such a result for $C(K_\Aaa)$ spaces.

\begin{lm}\label{Lleneuoe}
Let $\Aaa$ be an almost disjoint family of countable sets with $X = \bigcup \Aaa$ and let $S \subs X$ and $\Bee \subs \Aaa$ be countable, such that $\cl S \cap \el_\Aaa \subs \el_\Bee$.
Then there exists a countable set $N \subs X$ such that $S \subs N$, $B \subs^* N$ for every $B\in \Bee$ and $A \cap N$ is finite for every $A \in \Aaa \setminus \Bee$.
\end{lm}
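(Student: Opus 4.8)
The plan is to build $N$ as an increasing union $N = \bigcup_{k\in\omega} N_k$ of finite sets, alternating between two demands. We start with $N_0$ equal to a finite piece of $S$, enumerate $S = \{s_0, s_1, \dots\}$ and $\Bee = \{B_0, B_1, \dots\}$, and at stage $k$ we first throw $s_k$ into $N_{k+1}$ and then add finitely many new points of each of $B_0, \dots, B_k$ so that, after stage $k$, the set $N_{k+1}$ contains at least the first $k$ elements (in some fixed enumeration of each $B_i$) of every $B_i$ with $i \le k$. This guarantees $S \subs N$ and $B_i \subs^* N$ for every $i$, indeed $B_i \setminus N$ is finite since all but finitely many elements of $B_i$ get enumerated at stages $\ge i$ and are eventually captured. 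The delicate point is the third requirement: we must do this while keeping $A \cap N$ finite for every $A \in \Aaa \setminus \Bee$.

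The key observation making the third requirement attainable is the hypothesis $\cl S \cap \el_\Aaa \subs \el_\Bee$: if $A \in \Aaa \setminus \Bee$ then $\el_A \notin \cl S$, so there is a basic neighborhood $\sn{\el_A} \cup (A \setminus F_A)$ of $\el_A$ disjoint from $S$, i.e. $A \cap S \subs F_A$ is finite. Thus $S$ itself already meets each $A \in \Aaa\setminus\Bee$ in a finite set; the danger is only that the ``padding'' we add to absorb the $B_i$'s accumulates on some such $A$. Here we use almost disjointness: each $B_i$ meets each fixed $A\in\Aaa\setminus\Bee$ in a finite set, but a single $A$ could meet infinitely many different $B_i$'s, each in a (bounded but nonzero) finite set, and that is exactly what we must prevent. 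The fix is a bookkeeping trick: when at stage $k$ we need to add more elements of $B_i$ to $N$, we are free to choose \emph{which} cofinitely-many elements of $B_i$ we are committing to; so we simply avoid, at stage $k$, the finitely many points lying in $B_i \cap (B_0 \cup \dots \cup B_{k-1})$ as well as any point we have been ``warned'' about — but more cleanly, we note that it suffices to interleave: enumerate $\Bee$ so that each $B_i$ is listed infinitely often, and each time $B_i$ comes up add just \emph{one} new point of $B_i$ not already in $N_k$; then $B_i \setminus N$ is finite for all $i$, and we claim $A \cap N$ is finite for $A \in \Aaa\setminus\Bee$.

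To verify that last claim, suppose toward a contradiction that $A \in \Aaa \setminus \Bee$ and $A \cap N$ is infinite. Since $A \cap S$ is finite, $N \setminus S$ must contain infinitely many points of $A$; each such point was added at some stage as ``one new point of $B_i$'' for some $i$, so $A$ meets $\bigcup_i B_i$ in an infinite set. But for each fixed $i$, $A \cap B_i$ is finite by almost disjointness, and — this is the crux — once we have enumerated past the finitely many elements of $A \cap B_i$, no further point of $A \cap B_i$ gets added; since at each stage we add a point of $B_i$ that is \emph{new} to $N$, only finitely many of the points added ``on behalf of $B_i$'' can lie in $A$. To turn infinitely many bad points into a contradiction we must therefore handle infinitely many indices $i$ simultaneously, and here we invoke local smallness-style reasoning at the level of a single countable set: actually it is enough to observe that we may pre-process $\Bee$ replacing each $B_i$ by $B_i \setminus (B_0 \cup \dots \cup B_{i-1})$ together with a bounded correction — equivalently, fix at the outset, for each pair $i < j$, the finite overlap $B_i \cap B_j$, and at stage $k$ when adding a point of $B_i$ also refuse all points in $\bigcup_{j<k} (B_i \cap B_j)$; this is still cofinitely many choices. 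The main obstacle, then, is precisely organizing the bookkeeping so that the set $\{\,$points ever added on behalf of some $B_i\,\}$ meets each $A \in \Aaa\setminus\Bee$ finitely; I expect this to follow from a diagonal argument showing that if $A$ met this set infinitely, then $A$ would have to be ``almost covered'' by $\Bee$ in a way forcing $\el_A \in \cl S$ after all, contradicting the hypothesis. I would write the stagewise construction carefully and then discharge this finiteness claim as the one nontrivial verification.
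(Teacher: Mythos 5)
You have correctly isolated the crux --- the only danger is that a set $A \in \Aaa\setminus\Bee$ with $A\cap\bigcup\Bee$ infinite accumulates the padding added on behalf of infinitely many $B_i$'s --- but your proof never actually closes this gap, and the devices you offer do not address it. Pre-processing the overlaps $B_i\cap B_j$ inside $\Bee$, or adding one new point of $B_i$ per stage, is irrelevant: since you must end with $B_i\subs^* N$, the set $N$ necessarily contains a cofinite piece of every $B_i$, and the problem is caused by sets $A$ \emph{outside} $\Bee$. Your final hope, that $A\cap N$ infinite would force $\el_A\in\cl S$, is false: it would only force $\el_A\in\cl\bigl(\bigcup\Bee\bigr)$, and the hypothesis says nothing about $\cl\bigl(\bigcup\Bee\bigr)$. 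Indeed, the lemma as literally stated (with no global assumption on $\Aaa$) is \emph{false}, so no bookkeeping can succeed: take $X=\omega\times\omega$, let $\Bee$ be the columns $\setof{\sn{n}\times\omega}{\ntr}$, let $\Aaa$ be $\Bee$ together with an almost disjoint, $\loe^*$-unbounded family of graphs of functions $\omega\to\omega$, and take $S=\emptyset$ (so the hypothesis $\cl S\cap\el_\Aaa\subs\el_\Bee$ holds vacuously). Any $N$ with $B\subs^* N$ for all $B\in\Bee$ contains $(\sn{n}\times\omega)\setminus F_n$ with $F_n$ finite, and an unbounded branch $f$ with $f(n)>\max F_n$ infinitely often meets $N$ infinitely.

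The missing ingredient is the monolithicity of $K_\Aaa$, which is available where the lemma is applied (Proposition~\ref{equivmonolithic}) and which the paper's proof uses explicitly. Putting $T=\bigcup\Bee$, monolithicity makes $\cl T$ countable, hence the family $\Dee$ of all $A\in\Aaa\setminus\Bee$ with $A\cap T$ infinite --- exactly your problematic sets --- is countable. One then enumerates $\Bee=\ciag B$ and $\Dee=\ciag D$, sets $C_n=B_n\setminus(D_0\cup\dots\cup D_n)$ (still $=^* B_n$ by almost disjointness), and takes $N=S\cup\bigcup_\ntr C_n$; each $D_k$ is explicitly excised from all but finitely many of the $C_n$, and every other $A\notin\Bee$ meets $T$ finitely anyway. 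Your recursive construction could be repaired along the same lines, but only after introducing $\Dee$ and its countability; without that step the argument cannot be completed.
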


\begin{pf}
Let $T = \bigcup \Bee$ and let $\Dee \subs \Aaa \setminus \Bee$ be such that $\cl T = T \cup \el_{\Bee \cup \Dee} \cup \sn \infty$.
Since $K_\Aaa$ is monolithic, the family $\Dee$ is necessarily countable (possibly finite or empty).
Write $\Bee = \ciag B$ and $\Dee = \ciag D$, where we agree that $D_n=\emptyset$ for $\ntr$ in case $\Dee = \emptyset$.
Let $C_n = B_n \setminus (D_0\cup \dots \cup D_n)$ and define
$$N = S \cup \bigcup_{\ntr} C_n.$$
We claim that $N$ is as required.

Indeed, if $A\in \Bee$ then $A = B_n$ for some $\ntr$ and therefore $A \subs^* C_n \subs N$.
If $A \in \Dee$ then, by assumption, $A \cap S$ is finite and $A \cap C_n = \emptyset$ for all but finitely many $\ntr$.
Furthermore, $A\cap C_n \subs A \cap B_n$ is finite for $\ntr$, therefore $A \cap N$ is finite.
Finally, if $A \in \Aaa \setminus (\Bee \cup \Dee)$ then $A \cap S$ is finite and $A \cap \bigcup_\ntr C_n \subs A \cap T$ is finite, because $\el_A \notin \cl T$.
\end{pf}

\begin{prop}\label{equivmonolithic}
Let $\Aaa$ be an almost disjoint family consisting of countable sets.
Then $C(K_\Aaa)$ has the controlled SCP  if and only if $K_\Aaa$ is monolithic.
\end{prop}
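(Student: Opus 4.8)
The plan is to prove Proposition~\ref{equivmonolithic} by establishing both implications, noting that one direction is essentially free from earlier work. For the forward direction, if $C(K_\Aaa)$ has the controlled SCP then by Proposition~\ref{pghorerfg} the dual ball $\ubal{C(K_\Aaa)^*}$ is weak$^*$ monolithic; since $K_\Aaa$ embeds canonically (via Dirac measures) as a weak$^*$ closed subspace of $\ubal{C(K_\Aaa)^*}$, monolithicity passes to $K_\Aaa$. (Alternatively one could quote that controlled SCP implies SCP implies, together with scatteredness, enough structure — but the dual ball route is cleanest.)

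The substantial direction is: monolithic $K_\Aaa$ implies controlled SCP. Here I would start from a separable $F \subs C(K_\Aaa)$ and a separable $G \subs C(K_\Aaa)^*$ and aim to build a norm-one projection $P$ with separable range whose image contains $F$ and whose adjoint's image contains $G$. The natural candidate for $\im P$ is a space of the form $X(\Bee)$ (the closure of the span of constants together with $\{1_x : x\in\bigcup\Bee\}\cup\{1_{A\cup\{l_A\}}: A\in\Bee\}$) for a suitably chosen countable $\Bee \subs \Aaa$ — these are the separable complemented subspaces used already in the proof of Theorem~\ref{scpimpliescscp}, and by Proposition~\ref{eberleinc0} each is isomorphic to $c_0$. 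The point of Lemma~\ref{Lleneuoe} is exactly to manufacture such a $\Bee$ with good ``closure'' behaviour: one chooses a countable $S \subs X$ and a countable $\Bee\subs\Aaa$ large enough to account for $F$ (the functions in a countable dense subset of $F$ are, up to uniform approximation, determined by countably many coordinates in $X$ and by countably many of the points $l_A$), and also large enough that the measures in a countable dense subset of $G$ are supported on $K_{\Bee}$-relevant points; then Lemma~\ref{Lleneuoe} gives a countable $N \subs X$ with $S\subs N$, $B\subs^* N$ for $B\in\Bee$, and $A\cap N$ finite for $A\in\Aaa\setminus\Bee$. This last property is what guarantees that the natural retraction $K_\Aaa \to$ (the sub-object generated by $N$ and $\Bee$) is continuous, giving a well-defined norm-one projection on $C(K_\Aaa)$.

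The key steps, in order, would be: (i) fix countable dense sets in $F$ and in $G$; extract from them a countable $S\subs X$, a countable $\Bee_0\subs\Aaa$ with $\cl S\cap\el_\Aaa\subs\el_{\Bee_0}$, and ensure the measures representing elements of the dense subset of $G$ live on $\bigcup_{B\in\Bee_0}(B\cup\{l_B\})\cup\{\infty\}$ — closing off under countably many such requirements while staying countable, using monolithicity of $K_\Aaa$ to keep all the relevant closures countable; (ii) apply Lemma~\ref{Lleneuoe} to get the set $N$; (iii) define $\Bee$ from $\Bee_0$ and $N$, verify that $K_\Aaa$ retracts continuously onto the closed subspace $Y$ corresponding to $X(\Bee)$ (the restriction-to-$N$-and-$\Bee$ map is continuous precisely because $A\cap N$ is finite for $A\notin\Bee$, so the preimages of basic neighbourhoods are open), and let $P$ be the induced composition operator, which is a norm-one projection with $\im P = X(\Bee)$ separable; (iv) check $F\subs\im P$ (by density and the coordinate/$l_A$-determination of $F$) and $G\subs\im P^*$ (by density and the support condition on the measures). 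The main obstacle I anticipate is step (i)/(iv) for the dual side: one must argue that a weak$^*$-separable $G\subs C(K_\Aaa)^*$ can be captured by countably many ``supports'' in a way compatible with $\im P^*$ — i.e. that the adjoint of the composition projection restricted to $X(\Bee)$ really does pick up all of $G$ — which requires being careful that the chosen $\Bee$ simultaneously satisfies the monolithicity-driven closure condition of Lemma~\ref{Lleneuoe} and contains the supports of a dense set of functionals, and that these two demands can be met by a single countable family via an $\omega$-length bootstrapping argument.
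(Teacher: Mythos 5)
Your argument is essentially the paper's: Proposition~\ref{pghorerfg} together with the Dirac embedding of $K_\Aaa$ into the dual ball gives the forward implication, and the converse is obtained exactly as in the paper by collecting a countable $S\subs X$ covering the supports $S_f$ for $f$ in a dense subset of $F$ and the (countable, by monolithicity) supports of the functionals in $G$, applying Lemma~\ref{Lleneuoe}, and composing with the retraction that collapses $\cl(X\setminus N)$ to $\sn\infty$. The one correction worth recording is that the range of this composition projection is not $X(\Bee)$ but the strictly larger space $C(K_\Aaa||\cl(X\setminus N))$ of functions constant on $\cl(X\setminus N)$ --- it also contains $1_{\sn x}$ for $x\in N\setminus\bigcup\Bee$, which is essential since $F$ may contain such indicators --- and the $\omega$-stage bootstrapping you anticipate on the dual side is unnecessary: a single closure under monolithicity suffices, the residual second-order closure being absorbed by the auxiliary family $\Dee$ inside the proof of Lemma~\ref{Lleneuoe}.
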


\begin{pf}
In view of Proposition~\ref{pghorerfg}, we need to prove the ``if" part only.
So fix an almost disjoint family $\Aaa$ and assume $X = \bigcup\Aaa$.

Recall that, given $f \in C(K_\Aaa)$, there exists a countable set $S_f\subs X$ such that $f$ is constant on $K_\Aaa \setminus \cl S_f$.
Every functional $\mu \in C(K_\Aaa)^*$ can be identified with a real-valued Borel measure whose support $\supp(\mu)$ is separable and therefore countable, because $K_\Aaa$ is monolithic.

Fix countable sets $A \subs C(K_\Aaa)$ and $B \subs C(K_\Aaa)^*$.
Let $S \subs X$ be a countable set such that $S_f \subs S$ for $f \in A$ and $\supp(\mu) \subs \cl S$ for every $\mu \in B$.
Since $K_\Aaa$ is monolithic, $\cl S$ is countable, therefore there exists a countable $\Cee \subs \Aaa$ such that $\cl S = S \cup \el_\Cee \cup \sn \infty$.

By Lemma~\ref{Lleneuoe}, there exists a countable set $N\subs X$ such that $S \subs N$ and $A \subs^* N$ for $A \in \Cee$ and $A \cap N$ is finite for every $A \in \Aaa \setminus \Cee$.
Let $V = \cl N$ and $W = \cl(X \setminus N)$.
Note that each of the sets $V$ and $W$ is closed and $V \cap W = \sn \infty$.
Define a retraction $\map r{K_\Aaa}V$ by collapsing the set $W$ to $\sn \infty$.
Define $\map P{C(K_\Aaa)}{C(K_\Aaa)}$ by
$$P f = (f \rest V) \cmp r.$$
In other words, $(P f) \rest V = f \rest V$ and $P f (x) = f(\infty)$ for $x\in W$.
Clearly, $P$ is a linear projection of norm one and its range consists of all functions $f\in C(K_\Aaa)$ that are constant on $W$.
Since $V$ is second countable, $\im P$ is separable.

Finally, we have that $A \subs \im P$ because $S_f \subs N$ for $f\in A$, and $B \subs \im P^*$ because $\supp(\mu) \subs V$ for $\mu \in B$.
\end{pf}

\section{The structure of locally small almost disjoint families of cardinality $\omega_1$}

The purpose of this section is to prove Theorem \ref{structural}. This is a structural result
where ladder system spaces play an important role, so we turn to them first.
Let $\kappa$ be an uncountable cardinal. An indexed family $\mathcal L=(L_\alpha)_{\alpha\in S}$ for $S\subseteq
 E_\kappa(\omega)=\{\alpha\in \kappa: \cf(\alpha)=\omega\}$
 is called a \emph{ladder system} on $S$ if each element
$L_\alpha$ consisting of ordinals in $\kappa\setminus E_\kappa(\omega)$ is of order type $\omega$, $L_\alpha\subseteq\alpha$ and $\sup(L_\alpha)=\alpha$.

It is clear that $\mathcal L = (L_\alpha)_{\alpha \in S}$ is an almost disjoint family of countable sets.
The space $K_{\mathcal L}$ is called a \emph{ladder system compact space}.
We shall call it \emph{non-trivial} if $S$ is stationary. 
This is explained by the following folklore fact:

\begin{prop}\label{stationaryladder} Suppose that $\mathcal L=(L_\alpha: \alpha\in S)$ is a ladder system on $S\subseteq\omega_1$
Then $K_{\mathcal L}$ is Eberlein if and only if $S$ is non-stationary.
\end{prop}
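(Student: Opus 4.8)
The plan is to establish the two implications separately. For the easy direction, suppose $S$ is non-stationary. Then there is a club $C \subs \omega_1$ disjoint from $S$, and since $C$ has order type $\omega_1$, its increasing enumeration gives an order-isomorphism $e\colon \omega_1 \to C$. For each $\alpha \in S$, the ladder $L_\alpha$ accumulates at $\alpha \notin C$, so $L_\alpha \cap C$ is bounded below $\alpha$; more carefully, letting $\gamma_\alpha = \sup(C \cap \alpha) < \alpha$, the tail of $L_\alpha$ above $\gamma_\alpha$ misses $C$. One then wants to use the ``layers'' determined by $C$ to disjointify. Concretely, write $\omega_1 \setminus \sn 0$ as the union of the countably-indexed intervals $[e(\xi), e(\xi+1))$ together with the singletons $\sn{e(\lambda)}$ for limit $\lambda$; each such interval is countable, and each $L_\alpha$ meets only countably many of them, with $L_\alpha$ essentially concentrated in the interval $[\gamma_\alpha, \alpha)$. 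This exhibits $\mathcal L$ (after discarding finite initial segments) as a countable union of subfamilies, each of which is point-countable, hence disjointifiable by Lemma~\ref{lemmadisjoint}(3); by Lemma~\ref{lemmadisjoint}(4) the whole family $\mathcal L$ is disjointifiable, so $K_{\mathcal L}$ is Eberlein by Proposition~\ref{eberleindisjoint}.

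For the hard direction, suppose $S$ is stationary; I want to show $K_{\mathcal L}$ is not Eberlein. By Proposition~\ref{eberleindisjoint} it suffices to show $\mathcal L$ is not disjointifiable, equivalently, by Lemma~\ref{lemmadisjoint}(1), that there is \emph{no} choice of finite sets $F_\alpha \subs L_\alpha$ making $\setof{L_\alpha \setminus F_\alpha}{\alpha \in S}$ pairwise disjoint. Suppose toward a contradiction such $F_\alpha$ were given. Since each $F_\alpha$ is a finite subset of $\alpha$, the map $\alpha \mapsto \sup F_\alpha$ (with the convention $\sup\emptyset = 0$) is regressive on $S$, so by Fodor's Pressing Down Lemma there is a stationary $S' \subs S$ and an ordinal $\delta$ with $\sup F_\alpha = \delta$ for all $\alpha \in S'$ (one can also stabilize $|F_\alpha|$ and even $F_\alpha$ itself on a further stationary set, since there are only countably many finite subsets of any countable ordinal and a countable union of non-stationary sets is non-stationary). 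Now pick $\alpha \in S'$ with $\alpha > \delta$; then $L_\alpha \setminus F_\alpha$ contains a tail of $L_\alpha$, in particular cofinally many ordinals in $(\delta, \alpha)$. The point is that \emph{some} other $\beta \in S'$ with $\beta < \alpha$ must have its ladder $L_\beta$ hitting $L_\alpha$ above $\delta$: indeed, consider the closure points. Take $\beta \in S'$ to be a limit of elements of $S'$ below $\alpha$; since $S'$ is stationary, such $\beta < \alpha$ with $\cf(\beta) = \omega$ exist, but I must ensure $\beta \in S'$ and that $L_\beta$ genuinely meets $L_\alpha$.

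The main obstacle is exactly this last point: a single ladder system on a stationary set need not be self-intersecting in an obvious way (the $L_\alpha$ for different $\alpha$ could a priori be almost disjoint from each other). The cleaner route, which I would actually follow, is to avoid engineering a self-intersection of $\mathcal L$ directly and instead argue topologically. If $K_{\mathcal L}$ were Eberlein, then by Proposition~\ref{eberleindisjoint} $\mathcal L$ is disjointifiable, so we may replace each $L_\alpha$ by $L_\alpha' = L_\alpha \setminus F_\alpha$ and assume the $L_\alpha'$ are pairwise disjoint while still $L_\alpha' \subs \alpha$, $\otp L_\alpha' = \omega$, $\sup L_\alpha' = \alpha$. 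Define $g\colon S \to \omega_1$ by letting $g(\alpha)$ be the least element of $L_\alpha'$; then $g$ is regressive, so by Fodor there is stationary $S'' \subs S$ and $\eta < \omega_1$ with $g(\alpha) = \eta$ for all $\alpha \in S''$ — but then $\eta \in L_\alpha' \cap L_\beta'$ for any two $\alpha, \beta \in S''$, contradicting pairwise disjointness since $S''$ has at least two elements. This gives the contradiction directly and cleanly, and is the argument I would write up: the non-triviality of $S$ (stationarity) is used precisely to invoke Fodor's Lemma on the regressive ``bottom rung'' function.
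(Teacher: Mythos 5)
Your proof is correct and takes essentially the same approach as the paper's: for the non-stationary direction you trim each ladder above the last point of the club below $\alpha$ and disjointify via point-countability and Lemma~\ref{lemmadisjoint}, and for the stationary direction you apply Fodor's Pressing Down Lemma to $\alpha\mapsto\min(L_\alpha\setminus F_\alpha)$, which is exactly the paper's argument. The exploratory first attempt at the hard direction can simply be deleted; the ``cleaner route'' you settle on is the intended proof.
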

\begin{proof} We will use \ref{lemmadisjoint}. Suppose that $K_{\mathcal L}$ is disjointifiable. 
Let $F_\alpha$ for $\alpha\in S$ be such that $L_\alpha\setminus F_\alpha$s form a pairwise
disjoint family.  Let $f(\alpha) = \min(L_\alpha\setminus F_\alpha)$.
This is a regressive function defined on
a stationary set, so it is constant on a stationary set, contradicting the disjointness.

Now, if $S$ is non-stationary, let $C\subseteq\omega_1$ be a club set disjoint from $S$.
For $\alpha\in S$ let $f(\alpha)=\max\{C\cap\alpha\}<\alpha$, hence $F_\alpha= L_\alpha\setminus (f(\alpha)+1)$
is finite. Note that this means that $L_\alpha\setminus F_\alpha$ is disjoint from
$L_{\alpha'}\setminus F_{\alpha'}$ whenever $f(\alpha)\not=f(\alpha')$. But pre-images under $f$ of 
one element sets are countable, hence ${\mathcal L}$ is a union of countably any disjointifiable families, and so
disjointifiable by Lemma~\ref{lemmadisjoint}(4).

\end{proof}

In our formalism, the non-isolated points of $K_\Aaa$ where $\Aaa$ is a ladder system
are $\{l_A: A\in\Aaa\}$ however we can associate them with limit ordinals $\alpha\in \omega_1$, namely
$l_{A_\alpha}$ is associated with $\alpha$. Also $\infty$ is associated with $\omega_1$.
This way we obtain a standard form of the ladder system space whose set of points is $[0,\omega_1]$.
Ladder system spaces were used by R. Pol as the first example of a Lindel\"of space $C_p(K)$ for which $K$ is not Corson compact, see \cite{pollindelof}, they were also exploited in \cite{ciesielskipol} in the
context of function spaces. It is clear that if $\mathcal L$ is a
 ladder system, then $K_{\mathcal L}$ is monolithic, therefore its space of continuous functions 
has the controlled SCP and all the properties of Theorem \ref{monolithiclist}. We will show that every locally small almost disjoint family of cardinality
$\omega_1$ is equivalent to a family made of ladder systems and pairwise disjoint families.

\begin{proof}[Proof of Theorem\ref{structural}]
Let $\mathcal A$ be a family of countable sets of cardinality $\omega_1$. We 
may assume that $\bigcup\Aaa$ consists of successor countable ordinals, i.e., 
$\omega_1\setminus E_{\omega_1}(\omega)$.

Note that using the local smallness of
$\Aaa$ by the standard closure argument for every $\alpha<\omega_1$ there is a
limit ordinal $\alpha<\delta<\omega_1$ such that if $A\cap\xi$ is infinite for some
$\xi<\delta$, then $\sup(A)<\delta$. Let us call this property of $\delta$ as $\mathcal P$.
Note that by the above observation the set of ordinals with property $\mathcal P$ is closed and unbounded in
$\omega_1$. Let $\{\delta_\alpha:\alpha<\omega_1\}$ be an increasing (and necessarily continuous)
enumeration of this club set. For
$A\in \Aaa$ let $\eta(A)=\min\{\alpha: |A\cap \delta_\alpha|\ \hbox{\rm is infinite}\}$. 
Directly from property $\mathcal P$ we obtain the following:

\begin{lm}\label{struclemma1} $\sup(A)<\delta_{\eta(A)+1}$ for every $A\in \Aaa$.

\end{lm}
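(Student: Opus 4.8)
The plan is to unwind the definitions and show that property $\mathcal P$ of $\delta_{\eta(A)+1}$ forces $\sup(A) < \delta_{\eta(A)+1}$. First I would recall what $\eta(A)$ gives us: by definition $\eta(A)$ is the least $\alpha$ such that $A \cap \delta_\alpha$ is infinite, so in particular $A \cap \delta_{\eta(A)}$ is infinite. Set $\xi = \delta_{\eta(A)}$ and $\delta = \delta_{\eta(A)+1}$; note $\xi < \delta$ since the enumeration $\{\delta_\alpha : \alpha < \omega_1\}$ is strictly increasing. Then $A \cap \xi$ is infinite and $\xi < \delta$, so property $\mathcal P$ of $\delta$ applies directly and yields $\sup(A) < \delta = \delta_{\eta(A)+1}$, which is exactly the claim.

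The one point that deserves a sentence of care is why $\eta(A)$ is well-defined, i.e., why the set $\{\alpha : |A \cap \delta_\alpha| \text{ is infinite}\}$ is nonempty: this holds because $A$ is an infinite (countable) set and $\sup\{\delta_\alpha : \alpha < \omega_1\} = \omega_1$ by unboundedness of the club, so $A \subseteq \delta_\alpha$ for some $\alpha < \omega_1$ (as $A$ is countable and bounded in $\omega_1$), whence $A \cap \delta_\alpha = A$ is infinite. Once $\eta(A)$ exists, the minimality built into its definition is what guarantees $A \cap \delta_{\eta(A)}$ itself is infinite (if $\eta(A)$ is a successor or limit, $A \cap \delta_{\eta(A)}$ infinite is immediate from the definition being attained at $\eta(A)$; strictly speaking the minimum is attained, so there is nothing to check).

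I do not expect any real obstacle here; the lemma is essentially a bookkeeping consequence of how $\eta$ and property $\mathcal P$ were set up, and the proof is two or three lines. If anything, the only mild subtlety is making sure one invokes $\mathcal P$ at $\delta_{\eta(A)+1}$ rather than at $\delta_{\eta(A)}$ — property $\mathcal P$ concludes $\sup(A) < \delta$ from the hypothesis that $A \cap \xi$ is infinite for some $\xi < \delta$ strictly below $\delta$, so we genuinely need the extra step to $\eta(A)+1$ to have room for such a $\xi$.

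\begin{proof}
Since $A$ is countable it is bounded in $\omega_1$, and since the club $\{\delta_\alpha:\alpha<\omega_1\}$ is unbounded there is $\alpha<\omega_1$ with $A\subseteq\delta_\alpha$; then $A\cap\delta_\alpha=A$ is infinite, so $\eta(A)$ is well defined. By minimality, $A\cap\delta_{\eta(A)}$ is infinite. Put $\xi=\delta_{\eta(A)}$ and $\delta=\delta_{\eta(A)+1}$; as the enumeration is strictly increasing, $\xi<\delta$. Thus $A\cap\xi$ is infinite with $\xi<\delta$, so property $\mathcal P$ of $\delta$ gives $\sup(A)<\delta=\delta_{\eta(A)+1}$.
\end{proof}
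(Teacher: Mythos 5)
Your proof is correct and is exactly the argument the paper intends: the paper states the lemma follows ``directly from property $\mathcal P$'', and your application of $\mathcal P$ at $\delta_{\eta(A)+1}$ using the fact that $A\cap\delta_{\eta(A)}$ is infinite and $\delta_{\eta(A)}<\delta_{\eta(A)+1}$ is precisely that direct argument, with the well-definedness of $\eta(A)$ as a reasonable extra check.
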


\begin{lm}\label{struclemma2}
The family
$$\Aaa_1=\{A\in \Aaa: \sup(A)<\delta_{\eta(A)}\}$$ is disjointifiable.
\end{lm}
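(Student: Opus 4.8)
The plan is to use Lemma~\ref{lemmadisjoint}(1): it suffices to find, for each $A \in \Aaa_1$, a finite set $F_A \subs A$ so that the family $\{A \setminus F_A : A \in \Aaa_1\}$ is pairwise disjoint. The key structural fact about $\Aaa_1$ is that each $A$ is "contained in a single block" of the $\delta$-filtration: if $A \in \Aaa_1$, then by definition $\sup(A) < \delta_{\eta(A)}$, while by the definition of $\eta(A)$ we have $|A \cap \delta_\beta|$ finite for every $\beta < \eta(A)$. Since $\eta(A)$ is either a successor or a limit, I would split into cases; in the limit case, continuity of the enumeration forces $\sup(A) < \delta_\beta$ for some $\beta < \eta(A)$, contradicting that $A \cap \delta_\beta$ is finite but $A$ is infinite --- unless $\eta(A)$ is a successor, say $\eta(A) = \beta_A + 1$. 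So in fact every $A \in \Aaa_1$ satisfies $\delta_{\beta_A} \loe \sup(A) < \delta_{\beta_A + 1}$ for a uniquely determined $\beta_A$ (where we may also have to account for finitely many points of $A$ below $\delta_{\beta_A}$).

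Next I would observe that $A \cap \delta_{\beta_A}$ is finite (this is exactly the minimality in the definition of $\eta(A)$), so setting $F_A = A \cap \delta_{\beta_A}$ gives a finite set, and $A \setminus F_A$ is entirely contained in the half-open block $[\delta_{\beta_A}, \delta_{\beta_A+1})$. Now if $A, A' \in \Aaa_1$ are distinct with $\beta_A \ne \beta_{A'}$, then $A \setminus F_A$ and $A' \setminus F_{A'}$ live in disjoint blocks and are automatically disjoint. This reduces the problem to disjointifying, for each fixed block index $\beta$, the subfamily $\Aaa_1^\beta = \{A \in \Aaa_1 : \beta_A = \beta\}$. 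But the elements of $\Aaa_1^\beta$ are countable subsets of the countable set $[\delta_\beta, \delta_{\beta+1})$ (after removing the $F_A$'s), hence $\Aaa_1^\beta$ is a countable family, since an almost disjoint family of subsets of a countable set is itself countable. A countable almost disjoint family is disjointifiable (it is trivially point countable, or one argues directly by a diagonal subtraction), so each $\Aaa_1^\beta$ is disjointifiable, say via finite corrections $F'_A$ for $A \in \Aaa_1^\beta$. Taking $F_A \cup F'_A$ as the final finite correction for each $A$, the whole family $\{A \setminus (F_A \cup F'_A) : A \in \Aaa_1\}$ is pairwise disjoint: within a block by choice of the $F'_A$, across blocks for free. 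By Lemma~\ref{lemmadisjoint}(1), $\Aaa_1$ is disjointifiable.

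The main obstacle I expect is the careful case analysis pinning down that $\eta(A)$ is a successor ordinal (equivalently, that $\sup(A) < \delta_{\eta(A)}$ forces $A$ to sit inside one block up to finitely many points) and making sure the uniqueness of $\beta_A$ and finiteness of $A \cap \delta_{\beta_A}$ are correctly extracted from the minimality in the definition of $\eta$ together with property $\mathcal P$; this is where one could slip, since the filtration is continuous and limit stages $\eta(A)$ must be ruled out. The rest --- reducing to countably many blocks and invoking that countable (or point countable) almost disjoint families are disjointifiable via Lemma~\ref{lemmadisjoint}(1) and (3) --- is routine. One could also phrase the final assembly through Lemma~\ref{lemmadisjoint}(4), writing $\Aaa_1 = \bigcup_{\beta < \omega_1} \Aaa_1^\beta$ as a union of disjointifiable families whose supports lie in disjoint blocks, but since the index set is uncountable one must note that disjointness of supports (not just countability of the union) is what makes the union disjointifiable, so I would prefer the direct finite-correction argument above.
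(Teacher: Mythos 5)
Your overall route is the same as the paper's: show that $\eta(A)$ must be a successor $\xi(A)+1$ (via continuity of the enumeration of the $\delta_\alpha$'s), cut off the finite set $A\cap\delta_{\xi(A)}$ so that what remains lives in the block $[\delta_{\xi(A)},\delta_{\xi(A)+1})$, observe that distinct blocks give disjointness for free, and reduce to the subfamilies sitting inside a single block. However, there is one genuine error at the crucial step: you justify the countability of each block family $\Aaa_1^\beta$ by the claim that ``an almost disjoint family of subsets of a countable set is itself countable.'' That is false --- there exist almost disjoint families of size continuum on $\omega$ (e.g., the branches of the binary tree), and an uncountable almost disjoint family concentrated in a single block would \emph{not} be disjointifiable, since a pairwise disjoint family of infinite subsets of a countable set must be countable. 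As written, your proof never invokes the hypothesis that $\Aaa$ is locally small, and without that hypothesis the lemma is simply false, so this is not a cosmetic slip.

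The correct justification, and the one the paper gives, is exactly local smallness: every $A$ with $\eta(A)=\beta+1$ has infinite intersection with the countable set $\delta_{\beta+1}\cap\bigcup\Aaa$, so by local smallness only countably many $A\in\Aaa$ share a fixed value of $\eta$. Once this is repaired, the remainder of your argument is fine and essentially coincides with the paper's: the paper assembles the blocks by enumerating each countable block family and collecting the $n$-th members across all blocks, obtaining a countable union of pairwise disjoint families to which Lemma~\ref{lemmadisjoint}(4) applies, whereas you carry out the equivalent finite-correction bookkeeping directly via Lemma~\ref{lemmadisjoint}(1). Your remark that Lemma~\ref{lemmadisjoint}(4) cannot be applied to an uncountable union as stated is a fair point; the paper's enumeration trick is precisely how it reduces the uncountably many blocks to a countable union.
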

\begin{pf}
If $\sup(A)<\delta_{\eta(A)}$, then by the continuity of
the sequence of $\delta_\alpha$s there is $\xi(A)$ such that $\xi(A)+1=\eta(A)$,
$A\cap\delta_{\xi(A)}$ is finite and $A\subseteq \delta_{\eta(A)}$. 
On the other hand, by local smallness of $\Aaa$ there are only countably many
elements $A$ of $\Aaa$ such that $\eta(A)$ is fixed.
$\Aaa_1'=\{A\setminus \delta_{\xi(A)}: A\in \Aaa_1\}$  is a union of uncountably many 
countable families each with its sum included in an interval of the form $[\delta_\alpha,\delta_{\alpha+1})$.
So it is a union of countably many pairwise disjoint families, and so it is disjointifiable by Lemma~\ref{lemmadisjoint}(4), and so
$\Aaa_1$ is disjointifiable.
\end{pf}

Define $\Aaa_2=\Aaa\setminus \Aaa_1$.

\begin{lm}\label{struclemma3} If $A\in \Aaa_2$, then the order type of $A\cap \delta_{\eta(A)}$ is $\omega$.
\end{lm}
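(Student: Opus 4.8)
The plan is to derive the whole statement from property $\mathcal P$ of the single ordinal $\delta_{\eta(A)}$, used in its contrapositive form. First I would record that $\eta(A)$ is well-defined and that $\eta(A)<\omega_1$: since $A$ is countable it is bounded in $\omega_1$, so $A=A\cap\delta_\alpha$ is infinite for all sufficiently large $\alpha$; hence $\delta_{\eta(A)}$ is a genuine member of the club $\{\delta_\alpha:\alpha<\omega_1\}$ and in particular has property $\mathcal P$.

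Next I would observe that the hypothesis $A\in\Aaa_2$ says precisely $\sup(A)\geq\delta_{\eta(A)}$, i.e.\ it is the negation of the conclusion of the instance of property $\mathcal P$ read for the set $A$ at the ordinal $\delta_{\eta(A)}$. Therefore the contrapositive of property $\mathcal P$ of $\delta_{\eta(A)}$ yields at once: for every $\xi<\delta_{\eta(A)}$ the set $A\cap\xi$ is finite.

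Finally I would combine this with the defining property of $\eta(A)$, namely that $B:=A\cap\delta_{\eta(A)}$ is infinite. Every proper initial segment of $B$ cut off by a point $b\in B$ equals $A\cap b$ with $b<\delta_{\eta(A)}$, hence is finite by the previous step. Since an infinite well-ordered set all of whose proper initial segments are finite has order type $\omega$, applying this to $B$ completes the argument.

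There is essentially no obstacle here beyond bookkeeping: the only things to verify are that $\eta(A)$ is well-defined (so that $\delta_{\eta(A)}$ inherits property $\mathcal P$) and the elementary order-type fact just quoted. In particular no case split on whether $\eta(A)$ is a successor, a limit, or $0$ is needed, because property $\mathcal P$ is invoked at $\delta_{\eta(A)}$ itself rather than at a predecessor; and Lemma~\ref{struclemma1} is not used in this proof.
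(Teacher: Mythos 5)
Your proof is correct and is essentially the paper's argument run in the contrapositive direction: the paper assumes the order type is not $\omega$, extracts an infinite $A\cap\xi$ with $\xi<\delta_{\eta(A)}$, and applies property $\mathcal P$ to force $A\in\Aaa_1$, whereas you use $\sup(A)\geq\delta_{\eta(A)}$ together with $\mathcal P$ to conclude directly that every proper initial segment of $A\cap\delta_{\eta(A)}$ is finite. The key ingredient, property $\mathcal P$ of $\delta_{\eta(A)}$, is identical in both.
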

\begin{proof}
If the order type is different, then there is $\xi<{\delta_{\eta(A)}}$ such that
$A\cap \xi$ is infinite. Then by property $\mathcal P$ of $\delta_{\eta(A)}$
we would have that $A\in \Aaa_1$.
\end{proof}

Let $S$ be the set of all $\delta_\alpha<\omega_1$ such that there exists
$A\in \Aaa_2$ with $\eta(A)=\alpha$.
For each $\delta_\alpha\in S$ let $\{A^{\delta_\alpha}_n: n\in\Nat\}$ be an enumeration (possibly with repetitions)
of all $A\in \Aaa_2$ with $\eta(A)=\alpha$; it exists because $\Aaa$ is locally small.
For $\alpha\in S$ let 
$$B^{\delta_\alpha}_n= A^{\delta_\alpha}_n\cap {\delta_\alpha}\ \ \hbox{and}
\ \ C^{\delta_\alpha}_n= A^{\delta_\alpha}_n\setminus {\delta_\alpha}.$$
It is clear that $\Bee_n=\{B^{\delta_\alpha}_n: \alpha\in S\}$ are ladder systems on $S$.
So it is enough to prove that each 
$$\Cee_n=\{C^{\delta_\alpha}_n: \alpha\in S\}$$
is disjointifiable. But for this note that it is actually a disjoint family, as
$$C^{\delta_\alpha}_n\subseteq [\delta_{\eta(A_n^{\delta_\alpha})}, \delta_{\eta(A_n^{\delta_\alpha})+1}),$$ 
which follows from Lemma \ref{struclemma1}. 
\end{proof}

It should be clear by \ref{lemmadisjoint}(3) that $\Aaa+_\phi\Bee$ is disjointifiable if
both $\Aaa$ and $\Bee$ are disjointifiable.
So \ref{lemmadisjoint}(4) implies that if $\Aaa$ of \ref{structural} is not disjointifiable, i.e.,
$K_\Aaa$ is not  Eberlein compact, then at least one of the ladder systems of \ref{structural}
is nontrivial, in our construction this translates to the stationarity of the set $S$
by Proposition \ref{stationaryladder}.

Note that $\Aaa+_\phi\Bee$  is not equivalent to a nontrivial ladder system $\mathcal L$  if
$\Aaa$ is pairwise disjoint, $\phi$ is a bijection and $\Bee$ is arbitrary, possibly a nontrivial
ladder system. To see this,
let $i$ be the Boolean isomorphism from the algebra generated by $\Aaa+_\phi\Bee$
and finite subsets of $\bigcup(\Aaa+_\phi\Bee)$ and the algebra generated by $\mathcal L$
and finite subsets of $\bigcup\mathcal L$.  As elements of $\Bee$ are infinite, for each $L\in \mathcal L$
there will
be distinct $x_L\in \bigcup\Bee$ such that $i(\{x_L\})\in L$. But this contradicts the Pressing Down Lemma
and injectivity of $i$.

We should note that assuming $\square_{\omega_1}$ there
are non-disjointifiable locally small almost disjoint families of countable sets $\Aaa$ of cardinality $\omega_2$
 such that $\Bee\subseteq\Aaa$ is disjointifiable for every $\Bee$ of cardinality $\omega_1$.
This is just a ladder system on a stationary $E\subseteq\{\alpha\in\omega_2: \cf(\alpha)=\omega\}$
such that $E\cap \alpha$ is non-stationary in any $\alpha\in\omega_2$ of cofinality $\omega_1$
(23.6 \cite{jech}). So the structure of bigger families may not depend on the  
subfamilies of cardinality $\omega_1$.

\begin{question}Describe the structure of locally small almost disjoint
families of cardinalities bigger than $\omega_1$.
\end{question}

\section{A space $C(K_\Aaa)$ with controlled and continuous SCP but without
a projectional skeleton}

\begin{df}
Let $L\subseteq K$ be compact Hausdorff spaces.
We define
$$C(K||L)=\{f\in C(K): f \ \hbox{is constant on}\  L\}.$$
\end{df}

The subject of this section is the 

\begin{pf}[Proof of Theorem \ref{nonprojskeleton}]

Let $\Cee = \sett{C_\al}{\al \in S}$ be a ladder system on $\omega_1$ with $S$ stationary.
We will prove that the Banach space $C(K_\Cee)$ is the union of a continuous chain of $2$-complemented separable subspaces, yet it has no projectional skeleton.
The second statement follows from Proposition \ref{eberleinandskeletons} and Proposition \ref{stationaryladder} above.
We shall check that canonical separable subspaces of $E := C(K_\Cee)$ are $2$-complemented.
By a canonical separable subspace we mean
$$E_\al = C(K_\Cee || L_\al),$$
where $\al < \omega_1$ and $L_\al = (\omega_1 \setminus \al) \cup \setof{\el_{C_\xi}}{\xi \in S \setminus \al} \cup \sn \infty$.
It is easy to check that $\sett{E_\al}{\al < \omega_1}$ is a continuous chain of separable spaces whose union is $E$.
Define
$$K_\al = \al \cup \setof{\el_{C_\xi}}{\xi \in S \cap (\al + 1)} \cup \sn \infty.$$
Note that $K_\al$ is the closure of $\al = [0,\al)$ in $K_\Cee$ and $K_\al \cap L_\al = \dn \infty {\el_{C_\al}}$ if $\al \in S$, while $K_\al \cap L_\al = \sn \infty$ for every $\al \in \omega_1 \setminus S$.
Let $\map {r_\al}{K_\Cee}{K_\Cee}$ be such that $r_\al \rest K_\al$ is the identity and either $r^{-1}(\infty) = L_\al\setminus \sn {\el_{C_\al}}$ or $r^{-1}(\infty) = L_\al$, depending on whether $\al \in S$ or $\al \notin S$.
Then $r_\al$ is a continuous retraction onto $K_\al$.
In case $\al \in S$, the point $\el_{C_\al}$ is isolated in $L_\al$, therefore $r_\al$ is indeed continuous.

Fix $\al < \omega_1$.
If $\al \notin S$ then $L_\al$ is a clopen set and $E_\al$ is $1$-complemented in $E$, which is witnessed by the formula
$$P f = f \cmp r_\al.$$
Now assume that $\al \in S$ and let $U = C_\al \cup \sn \infty$.
Define $\map Q{E}{E_\al}$ by setting
$$Q f = f \cmp r_\al + (f(\el_{C_\al}) - f(\infty)) \cdot \chi_U.$$
It is clear that $Q$ is a linear operator with $\norm Q \loe 2$.
Observe that $Q f (\el_{C_\al}) = f(\el_{C_\al})$ and $Q f (\infty) = f(\el_{C_\al})$.
Furthermore, $Q f$ is constant on $L_\al$, that is, $Q f \in E_\al$.
On the other hand, if $f \in E_\al$ then $f = f \cmp r_\al$ and $f(\infty) = f(\el_{C_\al})$, therefore $Qf = f$.
Hence $Q$ is a projection onto $E_\al$.
It follows that $E_\al$ is $2$-complemented in $E$.
This completes the proof.
\end{pf}

\section{A non-weakly Lindel\"of space $C(K_\Aaa)$ which has the SCP}

The subject of this section is the proof of theorem \ref{nonmono}. First we note that
there cannot be a space like in this theorem with only countably many isolated points:

\begin{prop}\label{uncomplemented} 
Let $\Aaa$ be an uncountable almost disjoint family such that $\bigcup \Aaa$ is countable.
Then $C(K_\Aaa)$ fails the SCP.
\end{prop}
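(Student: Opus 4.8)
The plan is to exhibit a concrete separable subspace of $C(K_\Aaa)$ which is not contained in any separable complemented subspace, by going through a complemented copy of $c_0$ inside $C(K_\Bee)$ for a suitable countable $\Bee \subs \Aaa$, and showing that copy fails to be complemented. Since $\bigcup\Aaa$ is countable, we are working with an almost disjoint family of subsets of $\omega$ in the classical sense, so $K_\Aaa$ is a separable Mr\'owka-type space and $C(K_\Aaa)$ is separable; thus the obstruction cannot come from a density argument. Instead it must come from the failure of a single copy of $c_0$ to be complemented, which is the classical phenomenon behind Whitley's proof of Phillips' theorem (\cite{whitley}): the canonical copy of $c_0$ spanned by the characteristic functions $\chi_{\sn x}$, $x \in \bigcup\Aaa$, is \emph{not} complemented in $C(K_\Aaa)$ when $\Aaa$ is uncountable.

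First I would fix the canonical copy $c_0 \subs C(K_\Aaa)$, namely the closed span $Y$ of $\setof{\chi_{\sn x}}{x \in \bigcup\Aaa}$, which is isometric to $c_0$ because $\bigcup\Aaa$ is countable and the functions $\chi_{\sn x}$ form a $c_0$-basis. Then I would assume toward a contradiction that there is a projection $\map P{C(K_\Aaa)}{C(K_\Aaa)}$ with $\im P$ separable and $Y \subs \im P$; composing with a projection of $\im P$ onto $Y$ (Sobczyk's theorem, as already invoked in the proof of Theorem~\ref{scpimpliescscp}, since $\im P$ is separable and $Y \iso c_0$) we would obtain a bounded projection $\map Q{C(K_\Aaa)}Y$. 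The core of the argument is to derive a contradiction from the existence of such $Q$. This is a Phillips--Whitley-style gliding hump / measure-theoretic argument: the adjoint sends each $\el_A$-evaluation-type functional to an element of $\ell_1(\bigcup\Aaa)$, and uncountably many of these must have disjointly supported ``large'' pieces, which one plays against the boundedness of $Q$ by testing on a suitable $\pm 1$-valued function in $C(K_\Aaa)$ built from an uncountable almost disjoint subfamily. Concretely: for each $A\in\Aaa$ consider $Q^*(\delta_{\el_A} - \delta_\infty) \in \ell_1$; an uncountable subfamily has these within distance $<\tfrac14\norm Q^{-1}$ (say) of vectors supported on a fixed finite set or with pairwise almost-disjoint supports, and then a function equal to $1$ on the relevant coordinates of finitely many such $A$ and using the almost-disjointness to keep sup-norm $\le 1$ forces $\norm{Qf}$ too large.

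The main obstacle, and the step I expect to require the most care, is exactly this last contradiction: making precise the gliding-hump selection so that finitely many of the near-disjoint $\ell_1$-pieces can be simultaneously ``activated'' by a single norm-one element of $C(K_\Aaa)$ while respecting that the $A$'s are only \emph{almost} disjoint (so their supports overlap in finite sets that must be absorbed into an error term). The cleanest route is probably to first pass, by a pigeonhole/$\Delta$-system-type argument on the uncountable family, to an uncountable $\Aaa' \subs \Aaa$ and a fixed $\eps>0$ such that for $A \in \Aaa'$ the functional $Q^*(\delta_{\el_A})$ has $\ell_1$-mass $\ge \eps$ concentrated on $A$ itself (using that $Q\chi_{\sn x} = \chi_{\sn x}$ pins down the coordinates of $Q^*$ on $\bigcup\Aaa$), then choose $A_1, \dots, A_k \in \Aaa'$ with $k > (2\norm Q/\eps)^2$ and a function $f \in C(K_\Aaa)$, $\norm f \le 1$, that is $\pm 1$ on a cofinite part of each $A_i$ with signs chosen to add the masses constructively, contradicting $\norm{Qf} \le \norm Q$. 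I would then conclude that no such $Q$ exists, hence $Y$ is not contained in any separable complemented subspace, so $C(K_\Aaa)$ fails the SCP. (I note the excerpt promises ``see the proof of Proposition~\ref{uncomplemented} for details'' was already referenced in the PRI example, confirming that the heart of the matter is the non-complementation of this canonical $c_0$.)
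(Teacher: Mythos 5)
Your overall architecture is exactly the paper's: the paper's proof consists of observing that the canonical copy of $c_0$ (the closed span $Y$ of $\setof{\chi_{\sn x}}{x\in\bigcup\Aaa}$) is uncomplemented in $C(K_\Aaa)$ by Whitley's argument, and then invoking Sobczyk's theorem to conclude that $Y$ cannot sit inside any separable complemented subspace. Your reduction via Sobczyk is correct and is precisely what the paper does; the paper then simply cites \cite{whitley} and \cite[Thm.~17.3]{KKLP} for the non-complementation.

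However, the part you flag as the ``core of the argument'' is broken as written. Every element of $Y$ is a uniform limit of finitely supported functions on $\bigcup\Aaa$ and hence vanishes at every $\el_A$ and at $\infty$; consequently, for any projection $\map Q{C(K_\Aaa)}{C(K_\Aaa)}$ with $\im Q=Y$ one has $Q^*\delta_{\el_A}=\delta_{\el_A}\cmp Q=0$ and likewise $Q^*\delta_\infty=0$. So the functionals $Q^*(\delta_{\el_A}-\delta_\infty)$ you propose to analyze are identically zero, and the claim that $Q^*(\delta_{\el_A})$ has $\ell_1$-mass $\goe\eps$ on $A$ is false (your own normalization $Q\chi_{\sn x}=\chi_{\sn x}$ forces all those coordinates to be $0$, not large). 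The roles of the two index sets must be swapped: apply $Q^*$ to the coordinate functionals $e_n^*$ of $Y\iso c_0$ for $n\in\bigcup\Aaa$. Writing $Q^*e_n^*=\delta_n+\nu_n$ with $\nu_n$ supported on $\el_\Aaa\cup\sn\infty$ and $\norm{\nu_n}\loe\norm Q+1$, the requirement that $Q\chi_{A\cup\sn{\el_A}}\in c_0$ forces $\nu_n(\sn{\el_A})\to-1$ as $n\to\infty$ along $A$, so each $A\in\Aaa$ owns some $n\in A$ with $|\nu_n(\sn{\el_A})|>\tfrac12$; since each fixed $n$ can serve at most $2(\norm Q+1)$ many $A$'s and there are only countably many $n$, this contradicts the uncountability of $\Aaa$. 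Note that this corrected argument needs no gliding hump, $\Delta$-system refinement, or sign choices at all, so the machinery you anticipate as the main obstacle is unnecessary once the right functionals are chosen.
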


\begin{pf}
The space $C(K_\Aaa)$ has a canonical isometric copy of $c_0$ which is not complemented by Whitley's argument \cite{whitley}, as in the case of $c_0 \subs \ell_\infty$ (see also \cite[Thm. 17.3]{KKLP} for a more general statement).
This fact, together with Sobczyk's theorem implies that $C(K_\Aaa)$ cannot have the SCP.
\end{pf}

\begin{lm}\label{eventconst}
Suppose $\mathcal A$ is an infinite almost disjoint family of subsets of a set $X$. Then every $f\in C(K_{\mathcal A})$ 
is eventually constant, that is, it belongs to  $C(K_{\mathcal A}|| \cl(X\setminus Y))$ for some countable $Y\subseteq X$.
\end{lm}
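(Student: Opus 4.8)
The plan is to show that any $f\in C(K_\Aaa)$ takes the value $f(\infty)$ at all but countably many isolated points, and then to gather those exceptional points into a countable set $Y\subseteq X$ so that $f$ becomes constant on $\cl(X\setminus Y)$. First I would observe that for each $\ntr$ the set $U_n=\setof{x\in X}{\,\abs{f(x)-f(\infty)}\goe 1/n\,}$ is a set of isolated points whose closure avoids $\infty$; since a basic neighborhood of $\infty$ is a cofinite-in-the-$\el_A$'s subset of $K_\Aaa$, compactness of $K_\Aaa$ forces $\cl{U_n}$ to meet only finitely many of the sets $\sn{\el_A}\cup A$. Consequently $\cl{U_n}\subseteq\bigcup_{i<k}(\sn{\el_{A_i}}\cup A_i)$ for finitely many $A_0,\dots,A_{k-1}\in\Aaa$, and because $f$ is continuous at each $\el_{A_i}$, the trace $U_n\cap A_i$ must be finite (otherwise $f(\el_{A_i})$ would be forced to differ from $f(x)$ for infinitely many $x$ near $\el_{A_i}$, contradicting convergence). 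Hence $U_n$ is finite.

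Next I would set $Y=\bigcup_{\ntr}U_n$, which is a countable (indeed a union of finite sets) subset of $X$, and note that $f(x)=f(\infty)$ for every $x\in X\setminus Y$. It remains to verify that $f$ is constant, equal to $f(\infty)$, on $\cl(X\setminus Y)$. A point of $\cl(X\setminus Y)\setminus(X\setminus Y)$ is either $\infty$ itself or some $\el_A$ with $A\setminus Y$ infinite; in the latter case $\el_A$ is the limit of points $x\in A\setminus Y\subseteq X\setminus Y$, on all of which $f$ equals $f(\infty)$, so by continuity $f(\el_A)=f(\infty)$ as well. Therefore $f\in C(K_\Aaa\,\|\,\cl(X\setminus Y))$, which is exactly the claim.

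The only real subtlety — the step I would treat most carefully — is the passage from ``$\cl{U_n}$ meets finitely many $\sn{\el_A}\cup A$'' to ``$U_n$ is finite''. This uses both compactness of $K_\Aaa$ (to get the finite subcover / finitely many relevant $A$'s, using the description of neighborhoods of $\infty$) and the first-countability of $K_\Aaa\setminus\sn\infty$ together with continuity of $f$ at each $\el_{A_i}$ (to rule out an infinite trace $U_n\cap A_i$). Everything else is routine bookkeeping with the explicit neighborhood bases of $K_\Aaa$. One should also note the statement is stated for a general set $X=\bigcup\Aaa$ and an arbitrary infinite almost disjoint family, so no countability or local smallness hypothesis is needed; monolithicity plays no role here.
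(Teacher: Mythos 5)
There is a genuine gap, and it sits exactly at the step you flag as the ``only real subtlety'': the claim that each $U_n=\setof{x\in X}{\abs{f(x)-f(\infty)}\goe 1/n}$ is \emph{finite} is false. Take any $A\in\Aaa$ and $f=\chi_{A\cup\sn{\el_A}}$; this is continuous because $A\cup\sn{\el_A}$ is clopen in $K_\Aaa$ (almost disjointness makes its complement open at each $\el_B$, $B\ne A$, and at $\infty$), yet $f(\infty)=0$ and $U_1=A$ is infinite. Your justification --- that an infinite trace $U_n\cap A_i$ would ``contradict convergence'' at $\el_{A_i}$ --- confuses convergence of $f(x)$ to $f(\el_{A_i})$ along $A_i$ with $f$ being eventually \emph{equal} to $f(\el_{A_i})$ there; an infinite trace merely forces $\abs{f(\el_{A_i})-f(\infty)}\goe 1/n$, which is no contradiction since $\el_{A_i}\ne\infty$. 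Relatedly, your closing remark that ``no countability hypothesis is needed'' is wrong: if the members of $\Aaa$ may be uncountable (say $\Aaa$ partitions an uncountable $X$ into uncountable pieces), the same $f=\chi_{A\cup\sn{\el_A}}$ is not constant on $\cl(X\setminus Y)$ for any countable $Y$, so the lemma itself fails. The paper's standing convention that all members of $\Aaa$ are countable is essential.

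The good news is that your argument is repaired by weakening the false claim to what your compactness step actually gives: $\cl{U_n}$ misses a basic neighborhood of $\infty$, hence $U_n\subs\bigcup_{i<k}A_i$ for finitely many $A_i\in\Aaa$, and since each $A_i$ is countable, $U_n$ is \emph{countable} --- which is all you need for $Y=\bigcup_{\ntr}U_n$ to be countable; the endgame (continuity plus density of $X\setminus Y$ in its closure) then goes through verbatim. With that correction your route is a clean, direct argument and genuinely different from the paper's, which instead argues by contradiction: if no countable $Y$ works, one recursively extracts uncountably many pairwise-disjoint pairs $x_\xi,y_\xi$ with $\abs{f(x_\xi)-f(y_\xi)}>\eps$, and since every basic neighborhood of $\infty$ omits only countably many points of $X$, some such pair lies inside every neighborhood of $\infty$, contradicting continuity at $\infty$. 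Your (corrected) version isolates the explicit exceptional set $\setof{x}{f(x)\ne f(\infty)}$ and shows directly that it is countable, which is arguably more informative; the paper's version is shorter but less constructive.
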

\begin{proof} Clearly, if $f$ is constant on $X\setminus Y$, then $f$ is constant on $\cl(X\setminus Y))$. So,  if the lemma is false, then 
 there is $\varepsilon>0$ and uncountably many pairs $x_\xi, y_\xi$ of elements of $X$ such that $|f(x_\xi)-f(y_\xi)|>\varepsilon$. So, as neighborhoods of
$\infty$ are complements of countable sets, every neighborhood of $\infty$ contains a pair as above. This contradicts the continuity of
$f$ at $\infty$.
\end{proof}

\begin{lm}\label{lemmainjection} Suppose that $\mathcal A$ is an infinite almost disjoint family of countable sets 
with $\bigcup\mathcal A=X$. Let $Y\subseteq X$ be countably infinite, co-infinite such that $X\setminus Y$ is not covered by
finitely many elements of $\mathcal A$ and let  $\mathcal B=\{A\in \mathcal A: A\subseteq^* Y\}$.
Suppose that there exists an injection $F:Y\rightarrow X\setminus Y$ satisfying
for each $A\in \mathcal A$:
$$\hbox{ If}\ A\not\in \mathcal B,\  \hbox{then}\  F[A\cap Y]=^*A\cap F[Y]. \leqno (*)$$

Then there is a retraction $r: K_{\mathcal A}\rightarrow \cl (X\setminus Y)$ onto $\cl (X\setminus Y)$ 
 and in particular 
the space $C(K_{\mathcal A}|| \cl(X\setminus Y))$ is complemented in $C(K_{\mathcal A})$.
\end{lm}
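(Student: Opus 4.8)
The plan is to build the retraction $r\colon K_{\mathcal A}\to \Cl{(X\setminus Y)}$ explicitly, using the injection $F$ as a recipe for where to send the points of $Y$ and the limit points $\el_A$ with $A\in\mathcal B$. On isolated points, I would set $r$ to be the identity on $X\setminus Y$ and $r(x)=F(x)$ for $x\in Y$; since $F$ is an injection into $X\setminus Y$, this is a well-defined function on the isolated points. The non-isolated points split into three cases: $\infty$, the points $\el_A$ with $A\in\mathcal B$ (i.e.\ $A\subseteq^* Y$), and the points $\el_A$ with $A\notin\mathcal B$. I would set $r(\infty)=\infty$, and $r(\el_A)=\el_A$ for $A\notin\mathcal B$ (note $\el_A\in\Cl{(X\setminus Y)}$ precisely when $A\cap(X\setminus Y)$ is infinite, which holds for $A\notin\mathcal B$). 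For $A\in\mathcal B$, the set $F[A\cap Y]=^* F[A]$ is an infinite subset of $X\setminus Y$; if it is almost contained in some member $A'$ of $\mathcal A$ I would set $r(\el_A)=\el_{A'}$, and otherwise $r(\el_A)=\infty$.

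The bulk of the work is checking continuity of $r$, which reduces to checking it at each non-isolated point, using the prescribed neighborhood bases of $K_{\mathcal A}$. At $\el_A$ for $A\notin\mathcal B$: a basic neighborhood of $r(\el_A)=\el_A$ is $\sn{\el_A}\cup(A\setminus G)$ with $G$ finite; I need a basic neighborhood of $\el_A$ mapping into it. The isolated points of $A$ split into $A\cap(X\setminus Y)$ (fixed by $r$) and $A\cap Y$ (sent via $F$); condition $(*)$ says $F[A\cap Y]=^*A\cap F[Y]\subseteq^* A$, so up to finitely many exceptions all of $A$ is sent into $A$, and shrinking the neighborhood by a finite set handles the exceptions, while $r(\el_A)=\el_A$ itself lands in the target. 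At $\el_A$ for $A\in\mathcal B$: here $A\subseteq^* Y$, so $A$'s isolated points are almost all in $Y$ and $r$ sends them into $F[A]=^*F[A\cap Y]$, which by the definition of $r(\el_A)$ is either almost inside the chosen $A'$ (so a basic neighborhood of $\el_{A'}$ absorbs them, after deleting finitely many) or not almost inside any single member of $\mathcal A$, in which case any basic neighborhood of $\infty$, being the complement of finitely many sets of the form $\sn{\el_{A_i}}\cup A_i$, must contain a cofinite subset of $F[A]$ — this is exactly where one uses that an infinite set not almost contained in any $A_i$ meets the complement of $A_0\cup\dots\cup A_{n-1}$ in an infinite (indeed cofinite relative to that finite union being finite) set. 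Continuity at $\infty$ is the last case: a basic neighborhood of $\infty$ in $\Cl{(X\setminus Y)}$ is the trace of $K_{\mathcal A}\setminus\bigcup_{i<n}(\sn{\el_{A_i}}\cup A_i)$; I must produce a basic neighborhood $K_{\mathcal A}\setminus\bigcup_{i<m}(\sn{\el_{B_j}}\cup B_j)$ of $\infty$ whose image avoids the $A_i$'s and $\el_{A_i}$'s. Using $(*)$ and the hypothesis that $X\setminus Y$ is not covered by finitely many members of $\mathcal A$, the preimage under $r$ of $\bigcup_{i<n}(\sn{\el_{A_i}}\cup A_i)$ is contained in a set of the same finite form plus a finite set, which is what is needed.

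The main obstacle, as usual with these retraction arguments, is the bookkeeping in the $A\in\mathcal B$ case and at $\infty$: one must be careful that the ``$=^*$'' in $(*)$ only fixes things up to finite error, and that the choice of $A'$ for each $A\in\mathcal B$ (when it exists) is used consistently, so that the preimages of basic clopen sets around $\el_{A'}$ and around $\infty$ are again basic clopen. It is worth isolating a small sublemma: if $Z\subseteq X$ is infinite and not almost contained in any finite union of members of $\mathcal A$, then for every finite $\mathcal F\subseteq\mathcal A$ the set $Z\setminus\bigcup\mathcal F$ is infinite; this is immediate since $Z\cap\bigcup\mathcal F$ would otherwise be cofinite in $Z$, and $Z\cap A$ is finite for all but finitely many $A\in\mathcal A$ by almost disjointness, forcing $Z$ to be almost covered by finitely many members.

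Finally, once $r$ is a well-defined continuous retraction onto $\Cl{(X\setminus Y)}$, the map $Pf=f\cmp r$ is a norm-one linear projection of $C(K_{\mathcal A})$ onto $\setof{f\cmp r}{f\in C(\Cl{(X\setminus Y)})}=C(K_{\mathcal A}\,||\,\Cl{(X\setminus Y)})$, the last equality because a continuous function on $K_{\mathcal A}$ factors through $r$ exactly when it is constant on the fibers of $r$, whose only non-singleton fiber of interest is $r^{-1}(\infty)\supseteq Y\cup\sn\infty\cup\setof{\el_A}{A\in\mathcal B,\ F[A]\text{ not almost in any member}}$, so $f$ being constant on $\Cl{(X\setminus Y)}$'s complement within that fiber is precisely membership in $C(K_{\mathcal A}\,||\,\Cl{(X\setminus Y)})$. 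This yields the ``in particular'' clause and completes the proof.
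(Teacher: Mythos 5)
Your overall plan (an explicit retraction, identity on $X\setminus Y$, $F$ on $Y$, followed by a case-by-case continuity check) is the same as the paper's, but two steps do not go through as written.

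First, the continuity check at $\el_A$ for $A\in\mathcal B$ has a genuine gap. You split into cases according to whether $F[A\cap Y]$ is almost contained in a single member $A'$ of $\mathcal A$, and in the remaining case you send $\el_A$ to $\infty$ and argue that ``an infinite set not almost contained in any single member meets the complement of any finite union cofinitely.'' That principle is false (a set can be the union of infinite pieces of two members of $\mathcal A$ without being almost contained in either), and in any case it is not what convergence to $\infty$ requires: for $F[A\cap Y]$ to converge to $\infty$ you need $F[A\cap Y]\cap A'$ to be \emph{finite for every} $A'\in\mathcal A$, not merely $F[A\cap Y]\not\subseteq^* A'$. This stronger statement is exactly the key observation of the paper's proof (its $(**)$), and it is where $(*)$ together with the injectivity of $F$ is really used: for $A'\notin\mathcal B$ one has $F[A\cap Y]\cap A'\subseteq^* F[A'\cap Y]$ by $(*)$, and injectivity of $F$ plus $|A\cap A'|<\omega$ makes this finite; for $A'\in\mathcal B$ one has $A'\subseteq^* Y$ while $F[A\cap Y]\subseteq X\setminus Y$. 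Once $(**)$ is in hand, your first case is vacuous and the dichotomy disappears; without it, the map you define need not be continuous at these points.

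Second, the concluding projection is wrong: $Pf=f\circ r$ is a norm-one projection, but its image is $\setof{g\in C(K_{\mathcal A})}{g=g\circ r}$, a copy of $C(\cl(X\setminus Y))$ whose members restrict to \emph{arbitrary} continuous functions on $\cl(X\setminus Y)$; it is not $C(K_{\mathcal A}||\cl(X\setminus Y))$, which by definition consists of functions \emph{constant on} $\cl(X\setminus Y)$. (Relatedly, your description of the fibers is off: $r$ sends $Y$ into $F[Y]\subseteq X\setminus Y$, so $Y\not\subseteq r^{-1}(\infty)$.) The fix is the paper's formula $P(f)=f-f\circ r+f(\infty)$: its range is contained in $C(K_{\mathcal A}||\cl(X\setminus Y))$ because $r$ is the identity on $\cl(X\setminus Y)$, and it fixes that subspace because for $f$ constant on $\cl(X\setminus Y)$ one has $f\circ r\equiv f(\infty)$. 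This is a small repair, but as stated the ``in particular'' clause is not proved.
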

\begin{proof} 
First note that  $\infty\in \cl(X\setminus Y)$ and that
$$\cl (X\setminus Y)=(X\setminus Y)\cup \el_{\mathcal A\setminus \mathcal B}\cup\{\infty\}.$$

Define $r: K_{\mathcal A}\rightarrow \cl (X\setminus Y)$ considering several cases:
If $x\in Y$, then $r(x)=F(x)$;
 if $x=\el_B$ where $B\in \mathcal B$, then $r(x)=\infty$ and 
of course, $r(x)=x$ when $x\in \cl (X\setminus Y)$.

We need to check the continuity of $r$ at each $x\in K_{\mathcal A}$. Of course, we need to consider
only the non-isolated points from $\el_{\mathcal A}\cup\{\infty\}$.
First note that 
 $$\hbox{If}\  A\in \mathcal B,\  \hbox{then}\  F[A\cap Y]\cap A'\  \hbox{is finite for each}\  A'\in \mathcal A.\leqno (**)$$
This is because by ($*$) if $A'\in \mathcal A\setminus \mathcal B$, then almost disjointness of $A$ and $A'$ implies
the almost disjointness of $F[A]$ and $A'$ since $F$ is an injection. On the other hand if $A'\in\mathcal B$, then
$A'\subseteq^* Y$ which is disjoint from $F[A]$. Going to the continuity of $r$, we have several cases.

If $x=\el_B$ for some $B\in \mathcal B$, then the convergent sequence $B$ which converges to $\el_B$ is
sent by $r$ to a convergent sequence $F[B]$ which by ($**$) converges to $\infty$ which is $r(\el_B)$ giving us
the continuity of $r$ at $x$.

If $x=\el_A$ for some $A\in \mathcal A\setminus \mathcal B$, then the convergent sequence $A$ which converges to $\el_A$ is
sent by $r$ to a convergent sequence $F[A\cap B]\cup (A\setminus B)$ which by ($*$) 
is included in the convergent sequence $A\setminus B$ which converges to $\el_A=r(\el_A)$ giving us
the continuity of $r$ at $x$.

Finally, to prove the continuity of $r$ at $\infty$, it is enough to consider $Z\subseteq X$ such that
$\infty\in \cl Z$ and prove that $\infty\in \cl(r[Z])$. The general case can be reduced to two sub-cases $Z\subseteq X\setminus Y$
and $Z\subseteq Y$. The first one is trivial as $r$ is the identity on $X\setminus Y$. In the second case
if $\infty$ were not in the closure of  $r[Z]$, then without loss of generality
we could assume that  $r[Z]=F[Z]$ is included in some  $A\in 
\mathcal A\setminus \mathcal B$, by ($*$) this gives
$$F[Z]\subseteq A\cap F[Y]=^*F[A\cap Y],$$
which gives $Z\subseteq^* A\cap Y$ since $F$ is an injection
and this is impossible as $\infty\in \cl(Z)$.

Define the projection $P: C(K_{\mathcal A})\rightarrow C(K_{\mathcal A}|| \cl(X\setminus Y))$ by
$$P(f)=f-f\circ r+f(\infty).$$
As $r$ is constant on $X\setminus Y$, any function of the form $P(f)$ is constantly equal to
$P(\infty)$ on $\cl (X\setminus Y)$. If $f\in  C(K_{\mathcal A}|| \cl(X\setminus Y))$, then it is equal to
$f(\infty)$ on $\cl(X\setminus Y)$ (as $\infty\in \cl(X\setminus Y)$), so $P(f)=f-f(\infty)+f(\infty)=f$,
hence $P$ is the required projection. 
\end{proof}

Let us now describe the main difficulty in finding an almost disjoint family satisfying
\ref{nonmono} and the way we overcome it.
The most well known space of the form $K_{\mathcal A}$ which  is not monolithic is obtained from 
an uncountable almost disjoint family $\mathcal A$ of subsets of $\Nat$. However, it does not have the SCP.
The reason why $c(K_{\mathcal A}|| K_{\mathcal A}\setminus
\Nat)\sim c_0$
 is not included in any separable complemented  subspace of $C(K_{\mathcal A})$ is that in this case, it would be
complemented in  $C(K_{\mathcal A})$ by Sobczyk's theorem. The latter is impossible for the same reason that
$c_0$ is not complemented in $\ell_\infty$. 

Looking for a non-monolithic space of the form $K_{\mathcal A}(X)$ we note that,  for such an $\mathcal A$ there must be a countable
set $Y\subseteq X$ such that $\{A\cap Y: A\in \mathcal A\}$ is uncountable, hence the situation is quite
similar to that of an uncountable almost disjoint family of subsets of $\Nat$,  that is obtaining the SCP seems
impossible.
Lemma \ref{lemmainjection} provides a solution to this problem.
 Let $\mathcal A$ be an uncountable almost disjoint family
of subsets of $\Nat$. Let $\Nat'=\{n': n\in \Nat\}$ where $n'$s are distinct elements
not belonging to $\Nat$. Define $F:\Nat\rightarrow \Nat'$ by $F(n)=n'$ and define
$$\mathcal A'=\{A\cup F[A]: A\in \mathcal A\}.$$
Applying Lemma \ref{lemmainjection} for  $X=\Nat\cup \Nat'$,  $Y=\Nat$, $F$, $\mathcal A=\mathcal A'$ as above, we obtain that
 $C(K_{\mathcal A'}||K_{\mathcal A'}\setminus\Nat)$
is complemented in $C(K_{\mathcal A'})$ even though $C(K_{\mathcal A}||K_{\mathcal A}\setminus\Nat)$ was not complemented in
$C(K_{\mathcal A})$. This remark explains how a non-monolithic space $K_{\mathcal A}$
with an uncomplemented subspace $C(K_{\mathcal A}||K_{\mathcal A}\setminus\Nat)$  can be extended to
a bigger space $K_{\mathcal A'}$  where already $C(K_{\mathcal A'}||K_{\mathcal A'}\setminus\Nat)$ is complemented.
This is done by bifurcating the points of $\Nat$ and enlarging the sets of the family $\mathcal A$.
Of course  an important point which makes this enlargement useful here is that $C(K_{\mathcal A'}||K_{\mathcal A'}\setminus\Nat)$
and $C(K_{\mathcal A}||K_{\mathcal A}\setminus\Nat)$ are isometric in a canonical way via the restriction
of a continuous function from $K_{\mathcal A'}$ to $K_{\mathcal A}$.

\vskip 13pt

For the rest of this section fix an
 increasing enumeration  $(\lambda_\eta:\eta\in\omega_1)$ of all limit
countable ordinals, in particular $\lambda_0=0$. For every $\eta>0$ with
$\eta\in\omega_1$ we fix a bijection $F_\eta:[0,\lambda_\eta)\rightarrow [\lambda_\eta,\lambda_{\eta+1})$.
The main idea of the proof of \ref{nonmono} is to construct an almost disjoint family $\mathcal A=
\{A_\eta: 0<\eta<\omega_1\}$ of subsets of
$\omega_1$ such that $A_\eta\subseteq \lambda_\eta$ for each $0<\eta<\omega_1$ so that the  injections
$F_\eta:\lambda_\eta\rightarrow (\omega_1\setminus\lambda_\eta)$ satisfy  the hypothesis of Lemma \ref{lemmainjection}.
Then, some essential parts of the enlargements of the families $\{A\cap \lambda_\eta: A\in \mathcal A\}$ described in the
previous paragraph are already  included in $\mathcal A$. This gives that 
$C(K_{\mathcal A}||\cl(\omega_1\setminus\lambda_\eta))$ are complemented in $C(K_{\mathcal A})$ for each $0<\eta<\omega_1$.
As any separable subspace of $C(K_{\mathcal A})$ is included in $C(K_{\mathcal A}||\cl(\omega_1\setminus\lambda_\eta))$ for some $\eta<\omega_1$ by Lemma~\ref{eventconst},
we obtain the SCP. So we need the following:

\begin{lm}\label{adfamily} There is a family $(A_\eta:\eta<\omega_1)$ of
subsets of $\wp(\omega_1)$  such that:
\begin{enumerate}
\item $A_\eta\subseteq \lambda_\eta$ for each $\eta\in (0,\omega_1)$,
\item $A_\eta\cap [\lambda_\xi, \lambda_{\xi+1})$ is infinite
for all $\xi<\eta<\omega_1$,
\item $A_\eta\cap [\lambda_\xi, \lambda_{\xi+1})=^*F_\xi[A_\eta\cap \lambda_\xi]$
for all $\xi<\eta<\omega_1$,
\item $A_\eta\cap A_\xi$ is finite for all $\xi<\eta<\omega_1$.
\item No finite union of the sets of the form $A_\xi\cap[0,\omega)$ cover
a co-finite subset of $[0,\omega)$.
\end{enumerate}
\end{lm}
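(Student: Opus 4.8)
The plan is to build the sets $A_\eta$ by transfinite recursion on $\eta<\omega_1$, maintaining conditions (1)--(5) as a running invariant. The key structural observation is that conditions (2) and (3) are essentially \emph{determined} once we know $A_\eta\cap[0,\omega)$: for $\xi>0$ the block $A_\eta\cap[\lambda_\xi,\lambda_{\xi+1})$ must agree, modulo a finite set, with $F_\xi[A_\eta\cap\lambda_\xi]$, so once the trace on $[0,\omega)$ is fixed we may propagate it block by block by declaring $A_\eta\cap[\lambda_\xi,\lambda_{\xi+1})=F_\xi[A_\eta\cap\lambda_\xi]$ exactly (no finite error needed in the construction), and the resulting set automatically lies in $\lambda_\eta$ by condition (1), since $A_\eta$ is built only from blocks below $\lambda_\eta$. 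Thus the recursion reduces to choosing, at stage $\eta$, an infinite co-infinite set $a_\eta\subseteq\omega$ so that the propagated set $A_\eta$ is almost disjoint from all previously constructed $A_\xi$, $\xi<\eta$, and so that condition (5) is preserved.

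First I would handle the successor-type bookkeeping: at a stage $\eta$ the family $\{A_\xi:\xi<\eta\}$ is countable, and I claim each $A_\xi$ has infinite intersection with $[0,\omega)$ — indeed this is condition (2) applied with the pair $(0,\xi)$, so for $\xi>0$ it is part of the invariant, and $A_0$ can be taken empty or irrelevant since the indexing starts at $\eta>0$. Because propagation is ``compatible'' with the block decomposition, two propagated sets $A_\eta,A_\xi$ are almost disjoint \emph{iff} their traces $a_\eta=A_\eta\cap\omega$ and $a_\xi=A_\xi\cap\omega$ are almost disjoint: if $a_\eta\cap a_\xi$ is finite then, since $F_\zeta$ is a bijection, $F_\zeta[A_\eta\cap\lambda_\zeta]\cap F_\zeta[A_\xi\cap\lambda_\zeta]=F_\zeta[(A_\eta\cap A_\xi)\cap\lambda_\zeta]$, and an easy induction on $\zeta$ shows $A_\eta\cap A_\xi\cap\lambda_\zeta$ stays finite at every block, hence $A_\eta\cap A_\xi\subseteq\lambda_{\min(\eta,\xi)}$ is finite. (One must be slightly careful that the finite sets do not accumulate, but since there are only finitely many blocks below any $\lambda_\zeta$ with $\zeta<\omega_1$ fixed, and $A_\eta\cap A_\xi$ is contained in finitely many blocks, this is immediate.) So the whole construction collapses to: recursively pick an infinite co-infinite $a_\eta\subseteq\omega$ almost disjoint from the countably many previously chosen $a_\xi$, which is the standard construction of an almost disjoint family on $\omega$ of size $\omega_1$ — a diagonalization against a countable set always succeeds, choosing $a_\eta$ to be a pseudo-intersection-style infinite subset of $\omega\setminus(\text{finite part of each }a_\xi)$ thinned to stay co-infinite.

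For condition (5), I would simply build the traces $\{a_\eta:\eta<\omega_1\}$ to be an almost disjoint family on $\omega$ with the extra property that no finite union of its members is co-finite; equivalently, I reserve at the start an infinite co-infinite set $Z\subseteq\omega$ and require every $a_\eta\subseteq\omega\setminus Z$ — then no finite union can cover a cofinite set since all of them miss the infinite set $Z$. This costs nothing: an almost disjoint family of size $\omega_1$ inside the infinite set $\omega\setminus Z$ exists by the same standard argument, and "co-infinite in $\omega$" is automatic. Finally I would verify the conditions one last time on the resulting sets: (1) holds because $A_\eta$ uses only blocks $[\lambda_\xi,\lambda_{\xi+1})$ with $\xi<\eta$, whose union is exactly $\lambda_\eta$; (2) holds because propagation keeps each block infinite as soon as $a_\eta$ is infinite (by induction, $A_\eta\cap\lambda_\xi$ infinite $\Rightarrow$ $F_\xi[A_\eta\cap\lambda_\xi]$ infinite); (3) holds with equality, a fortiori modulo finite; (4) is the almost disjointness just established; (5) is the reservation of $Z$. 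The main obstacle is purely the verification that propagation interacts correctly with almost disjointness across infinitely many blocks — getting the induction on $\zeta$ right and confirming the finite errors stay finite — everything else is routine diagonalization.
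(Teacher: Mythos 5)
There is a genuine gap, and it sits exactly where you flagged it as ``one must be slightly careful.'' Your reduction rests on the claim that if the traces $a_\eta=A_\eta\cap[0,\omega)$ and $a_\xi=A_\xi\cap[0,\omega)$ are almost disjoint, then the sets obtained by \emph{exact} block-by-block propagation $A_\eta\cap[\lambda_\zeta,\lambda_{\zeta+1})=F_\zeta[A_\eta\cap\lambda_\zeta]$ are almost disjoint. This is false. Since each $F_\zeta$ is injective, $A_\eta\cap A_\xi\cap[\lambda_\zeta,\lambda_{\zeta+1})=F_\zeta[(A_\eta\cap A_\xi)\cap\lambda_\zeta]$, so the size of the intersection inside block $\zeta+1$ equals the size of the \emph{whole} accumulated intersection below $\lambda_\zeta$; a single common point $m\in a_\eta\cap a_\xi$ therefore produces an intersection of size $2^{n-1}$ below $\lambda_n$, and already $A_\eta\cap A_\xi\cap\lambda_\omega$ is infinite. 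Your parenthetical justification --- that there are only finitely many blocks below $\lambda_\zeta$ and that $A_\eta\cap A_\xi$ meets only finitely many blocks --- is wrong on both counts for $\zeta\geq\omega$: there are $|\zeta|$ many blocks below $\lambda_\zeta$, and with exact propagation the intersection meets every one of them as soon as $a_\eta\cap a_\xi\nnempty$. Since an uncountable family of pairwise \emph{disjoint} infinite subsets of $\omega$ does not exist, you cannot escape by making the traces literally disjoint, so exact propagation cannot yield an uncountable almost disjoint family at all; condition (3) is stated with $=^*$ rather than $=$ precisely because finite corrections in each block are unavoidable.

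The paper's construction deals with this by a double recursion: for each $\eta$ it builds approximations $A_\eta^\beta$ for $\beta\loe\eta$, propagating exactly at successor stages ($A_\eta^{\beta+1}=A_\eta^\beta\cup F_\beta[A_\eta^\beta]$), but at each limit stage $\beta$ it re-assembles $A_\eta^\beta$ from the blocks of the $A_\eta^{\beta_n}$ along a cofinal sequence $(\beta_n)$ while \emph{subtracting} $\bigcup_{i\loe n}A_{\xi_i}$ for an enumeration $(\xi_i)$ of the earlier indices. This deliberately destroys the equality in (3) down to $=^*$ (the subtracted sets meet each block finitely, by the inductive almost-disjointness) and is what actually secures condition (4). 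Your handling of (5) by reserving an infinite set $Z\subs\omega$ disjoint from all traces is fine and slightly cleaner than the paper's inductive bookkeeping, but it does not repair the central problem: some mechanism for trimming the accumulated intersections at limit blocks is indispensable, and your write-up has none.
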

\begin{proof}
We put $A_0=\emptyset$, $A^1\subseteq[0,\omega)\subseteq [\lambda_0,\lambda_1)$
any infinite co-infinite subset of $[0,\omega)$ and suppose that $A_\alpha$s were
constructed so that (1) - (5) are satisfied. To construct $A_\eta=A^\eta_\eta$
we will construct inductively $A_\eta^\beta$s for $\beta\leq \eta$ so that the following
are satisfied:

\begin{enumerate}
\item[a)] $A_\eta^\beta\subseteq \lambda_\beta$ for each $\beta\leq \eta$,
\item[b)] $A_\eta^\beta\cap [\lambda_\xi,\lambda_{\xi+1})$ is infinite
for all $\xi<\beta\leq \eta$,
\item[c)] $A_\eta^\beta\cap [\lambda_\xi, \lambda_{\xi+1})=^*F_\xi[A_\eta^\beta\cap \lambda_\xi]$
for all $\xi<\beta\leq \eta$,
\item[d)] $A_\eta^\beta\cap A_\xi$ is finite for all $\xi<\eta$ and all $\beta\leq\eta$.
\item[e)] $A_\eta^\beta\cap \lambda_{\beta'}=^*A^{\beta'}_\eta$ for
all $\beta'\leq \beta\leq\eta$.
\end{enumerate}

We put $A_\eta^0=\emptyset$, 
Using the inductive assumption (5) find an infinite co-infinite $A^1_\eta\subseteq[0,\omega)\subseteq [\lambda_0,\lambda_1)$ which is almost disjoint from all $A_\xi\cap [0,\omega)$ for $\xi<\eta$ and so that (5) still
holds. To construct $A_\eta^{\beta+1}$ given an $A_\eta^\beta$ just
put 
$$A_\eta^{\beta+1}=A^\beta_\eta\cup F_{\lambda_\beta}[A^\beta_\eta].$$
This gives a) - c), e) immediately from the inductive hypothesis a) - c), e). The item d) follows from
the inductive hypothesis d) and from items (1), (3) for $\xi<\eta$. 

Only the limit stage is nontrivial. Given a limit $\beta\leq \eta$ choose a strictly
increasing sequence $(\beta_n)_{n\in \Nat}$ with $\beta_0=0$ and its supremum
equal to $\beta$. Enumerate $\{\xi: \xi<\eta\}$ as $\{\xi_n:n\in \Nat\}$. Define
$$A_\eta^\beta=\bigcup_{n\in \Nat\setminus\{0\}}[A_\eta^{\beta_n}\cap
[\lambda_{\beta_{n-1},\lambda_{\beta_n}})\setminus \bigcup_{i\leq n} A_{\xi_i}].$$
Note that the above definition implies that 
 $A_\eta^{\beta_n}\cap \lambda_{\beta_{n}}=^* A_\eta^\beta \cap \lambda_{\beta_{n}}$
by the inductive assumption e) since there are only finitely many integers less than $n$, hence
e) follows since $\beta_n$s are unbounded in $\beta$.
a) is immediate. c) follows from
the fact that 
$$A^\beta_\eta\cap [\lambda_\xi,\lambda_{\xi+1})=A_\eta^{\beta_{n+1}}\cap [\lambda_\xi,\lambda_{\xi+1})
=^* F_\xi[A_\eta^{\beta_{n+1}}\cap \lambda_\xi]=^* F_\xi[A_\eta^{\beta}\cap \lambda_\xi]$$
for $n\in\Nat$ such that $\xi\in [\beta_n,\beta_{n+1})$ by the inductive assumption c)
since in the definition of $A_\eta^\beta$ we subtract finite sets by the inductive assumption d)
and from the fact that $A_\eta^{\beta_{n+1}}\cap \lambda_{\beta_{n+1}}=^* A_\eta^\beta \cap\lambda_{\beta_{n+1}}$.
b) follows from c) and the fact that $A_\eta^1$ is infinite.
d) follows from the fact that we subtract $\bigcup_{i\leq n} A_{\xi_i}$ 
in the construction of $A^\beta_\eta$.

Now we see that $A_\eta=A^\eta_\eta$ satisfies (1) - (5).
\end{proof}

\begin{proof}[Proof of Theorem \ref{nonmono}]
Let $\mathcal A=\{A_\eta:\eta<\omega_1\}$ be as in Lemma \ref{adfamily}.
First note that $K_{\mathcal A}$ is not monolithic. This follows from the fact that $[0,\omega)=[\lambda_0, \lambda_1)$
has $\el_{\mathcal A}$ in its closure, and this is because $A\cap [\lambda_0, \lambda_1)$ is infinite for each $A\in \mathcal A$ which is 2) of Lemma~\ref{adfamily}.
By Lemma~\ref{lemmainjection} for every $0<\eta<\omega_1$ the space  $C(K_{\mathcal A}|| \cl(\omega_1\setminus \lambda_\eta))$ is complemented in $C(K_{\mathcal A})$.
This together with Lemma~\ref{eventconst} completes the proof of Theorem \ref{nonmono}.
\end{proof}



\end{document}